\numberwithin{equation}{section}
\newtheorem{prop}{Proposition}
\newtheorem{lemma}[prop]{Lemma}
\newtheorem{mres}[prop]{Main Result}
\newtheorem{thm}[prop]{Theorem}
\newtheorem{cor}[prop]{Corollary}
\numberwithin{prop}{section}
\theoremstyle{definition}
\newtheorem{defn}[prop]{Definition}
\newtheorem{example}[prop]{Example}
\newtheorem{rmk}[prop]{Remark}
\newtheorem{nota}[prop]{Notation}
\newcommand{\lp}[1]{\left( #1 \right)}
\newcommand{\del}{\partial}
\newcommand{\N}{\nabla}
\newcommand{\WW}{\mathcal W}
\renewcommand{\bar}[1]{\overline{#1}}
\DeclareMathOperator{\Aut}{Aut}
\DeclareMathOperator{\R}{R}
\DeclareMathOperator{\Rc}{Rc}
\DeclareMathOperator{\Ric}{Ric}
\DeclareMathOperator{\Rm}{Rm}
\DeclareMathOperator{\tr}{tr}
\DeclareMathOperator{\End}{End}
\begin{document}
\title[Ricci Flows with Nilpotent Symmetry]{Ricci Flows with Nilpotent Symmetry \\ and Zero Bundle Curvature}
\author{Steven Gindi}
\address{Yeshiva University\\
             New York, NY 10033}
\email{\href{mailto:steven.gindi@yu.edu}{steven.gindi@yu.edu}}

\vspace*{-10mm}
\maketitle
\tableofcontents
\begin{large}

\section{Introduction and Summary of Results}

In 2010, Lott studied the long time behavior of three dimensional Ricci flows with bounded   normalized   curvature \cite{LOTT2}.  While a sequence of such flows may collapse, Lott demonstrated   that  an associated Ricci flow limit exists which has local nilpotent symmetry \cite{LOTT1}. (This was based on the work of Cheeger, Fukaya and Gromov, and Hamilton \cite{CFG,Ham1}.) He then analyzed the blowdown limits of these solutions by constructing a functional that is monotone along invariant Ricci flows  on \textit{abelian} principal bundles, which was sufficient for his three dimensional applications.  Consequently, Lott proved, that after passing to the universal cover, a three dimensional Ricci flow, with bounded   normalized  curvature and normalized   diameter,  approaches a homogeneous expanding soliton, a type of self-similar solution \cite{LOTT2}.
 
Generalizing this procedure to higher dimensions requires an analysis of Ricci flow blowdown limits on \textit{nonabelian}, nilpotent principal bundles. The main challenge is to construct a functional that is monotone along the flow on such bundles. 

In this paper, we use Lott's functional and construct a new functional to derive rigidity results for invariant Ricci flow blowdown limits on nilpotent principal bundles with zero associated curvature (see Theorems \ref{THMBLOW} and \ref{THMBLOW2}).  Consequently, we prove that the blowdown limit is locally an expanding Ricci soliton when the structure group is the three dimensional Heisenberg group, $Nil^{3}$ (Theorem \ref{THMSOL1}). In addition, we  classify this soliton  when the base manifold is one dimensional (Theorem \ref{THMFOURSOL}). This, together with Lott's work in the abelian setting \cite{LOTT2}, yields a complete local classification of invariant Ricci flow blowdown limits on four dimensional, nilpotent principal bundles.

To derive these results, we first prove that Lott’s functional, designed specifically for the abelian
setting, contains hidden monotonicity properties
for Ricci flows on nonabelian, nilpotent principal bundles (Definition \ref{DEFWL} and Theorem \ref{THMDWL}).  We then extend Lott's functional by incorporating curvature terms based on the nonabelian structure group (Definition \ref{DEFW2}). We prove that this new functional is monotone for  Ricci flows on $Nil^{3}$ bundles and that it yields stronger blowdown limit results (Theorems \ref{THMDW2} and  \ref{THMBLOW2}).

\subsection{Summary of Results}
We now present a more detailed summary of our results regarding functionals and blowdown limits.
The forthcoming sections will not rely on the definitions and statements made here.

\subsubsection{Background and Notation}
Let $g(t)$ be a Ricci flow on a manifold $M$ for $t\in(0, \infty)$ and let $\{s_{i}\}$ be a sequence of positive numbers such that $\lim_{i \rightarrow \infty}s_{i}= \infty$. Suppose  the sequence of Ricci flows $g_{i}(t):= s_{i}^{-1}g(s_{i}t)$ Cheeger-Gromov converges to $g_{\infty}(t)$ on a manifold $M_{\infty}$ (see Definition \ref{DEFCGCON}). We call  $g_{\infty}(t)$ a sequential blowdown limit of $g(t)$. 

To study Ricci flow blowdown limits  on principal bundles, we will need the following data associated with an invariant metric $\bar{g}$ on a $\mathcal{G}$-principal bundle $P \rightarrow M$ (see Section \ref{SECRFP1}):   
a fiberwise metric $G \oplus g$ on $\mathfrak{G} \oplus TM$, where $\mathfrak{G}$ is the adjoint bundle of $P$, and a connection $D$ on $\mathfrak{G} \oplus TM$. In addition, we have the connection $\bar{A}$ on $P$ that is associated with $\bar{g}$ and the tensor $F:=F_{\bar{A}} \in \Gamma(\wedge^{2}T^{*}M\otimes \mathfrak{G})$ that corresponds to the curvature $\bar{F}:=d\bar{A}+\frac{1}{2}[\bar{A},\bar{A}]$. 
 
We will also need the following definition:

\begin{defn} \label{DEFDEL3}
Let  $\mathfrak{g}$  be a Lie algebra with Lie bracket $[\hspace{.1mm},\hspace{.1mm}]$. Define $\delta: \End \mathfrak{g} \rightarrow \wedge^{2}\mathfrak{g}^{*} \otimes \mathfrak{g}$ by 
\begin{align*}
\delta(A)(\eta_{1},\eta_{2})= A[\eta_{1},\eta_{2}] - [A\eta_{1},\eta_{2}]- [\eta_{1},A\eta_{2}].
\end{align*}
\end{defn}
Note that  $\delta(A)=0$ if and only if $A$ is a derivation of $\mathfrak{g}$. 

\subsubsection{Lott's Functional: New Monotonicity Properties and Rigidity Results}
 Using the above notation, the $\WW_{+}$-functional in Lott's paper \cite{LOTT2} is defined as follows. 

\begin{defn} \label{DEFINTWL} Let $\bar{g}$ be an invariant metric on a $\mathcal{G}$--principal bundle $P\rightarrow M$, which is fibered over a compact, $n$-dimensional manifold. Also let  $f \in C^{\infty}(M)$ and $\tau \in \mathbb{R}_{>0}$.  Define
\begin{equation*}
\mathcal{\WW}_{L,+}(\bar{g},f, \tau)= \int_{M} \Big{(} \tau (|\nabla f|^{2}+R_{g} -\frac{1}{4} |DG|^{2} -\frac{1}{4} |F|^{2}) -f+n \Big{)} \frac{e^{-f}}{(4\pi \tau)^{\frac{n}{2}}}dV_{g}. 
\end{equation*}
\end{defn}

Lott's functional can be derived by applying a Perelman type point of view to an abelian principal bundle \cite{PERLMAN1,FELDMAN1,LOTT2, STREETS4}. At first sight, this functional should not be useful in the nonabelian setting. Yet, using subtle integration by parts, we prove the following result (see Theorem \ref{THMDWL} and Lemma \ref{LEMREL1} for the details): 

\begin{thm} Let $\bar{g}(t)$ be an invariant Ricci flow defined for $t \in (0, \infty)$ on a nilpotent $\mathcal{G}$-principal bundle $P\rightarrow M$, which is fibered over a compact, $n$-dimensional  manifold. Suppose the curvature $F_{\bar{A}(t)}=0$.  Also let $f_{t} \in C^{\infty}(M)$ be a solution to the conjugate heat equation (\ref{EQRC4}). Then $\mathcal{W}_{L,+}(\bar{g}(t),f(t),t)$ is a nondecreasing function of $t$.
\end{thm}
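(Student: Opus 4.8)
The plan is to realize $\mathcal{W}_{L,+}$ as the dimensionally reduced form of a Perelman--Feldman--Ilmanen--Ni expander entropy and to differentiate it directly along the reduced flow, with the nonabelian structure entering only through correction terms that I will control by integration by parts. First I would record the system of evolution equations that the invariant Ricci flow $\bar g(t)$ on $P$ induces on the base data $(g,G,D)$ under the standing hypothesis $F_{\bar A(t)}=0$. Schematically, the base metric evolves by $\partial_t g = -2\Rc_g + Q$, where $Q$ is the symmetric $2$-tensor assembled from $DG$ (the would-be $F$-contribution dropping out because $F=0$), while the fiberwise metric $G$ evolves by a heat-type equation whose zeroth-order part is built from the bracket of $\mathfrak{g}$ and hence, by Definition \ref{DEFDEL3}, is governed by the derivation defect $\delta$. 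The point of departure is that on an \emph{abelian} bundle $\mathcal{W}_{L,+}$ is exactly Perelman's expander entropy of the total-space metric $\bar g$ divided by the (constant) fiber volume, since there the total scalar curvature satisfies $R_{\bar g}=R_g-\tfrac14|DG|^2-\tfrac14|F|^2$; in the nilpotent case the fiber contributes an additional, manifestly negative bracket term to $R_{\bar g}$ that is \emph{absent} from $\mathcal{W}_{L,+}$, so the functional is no longer the total-space entropy and its monotonicity cannot be inherited for free.

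Next I would compute $\frac{d}{d\tau}\mathcal{W}_{L,+}$ with $\tau=t$ (the expander convention, $d\tau/dt=+1$). Writing $u=e^{-f}/(4\pi\tau)^{n/2}$ and using that $f$ solves the conjugate heat equation (\ref{EQRC4}) adapted to the effective flow $\partial_t g=-2\Rc_g+Q$, the terms coming from the measure and from $f$ itself are handled exactly as in Perelman's argument, reducing the calculation to a pointwise identity modulo divergences. The ``good'' part of the resulting integrand reproduces the expander square
\begin{equation*}
2\tau\,\Big|\,\Rc_g+\nabla^2 f+\tfrac{1}{2\tau}\,g\,\Big|^2\,u\;\geq\;0,
\end{equation*}
together with the corresponding nonnegative contribution from the evolution of $G$. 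What remains are the genuinely nonabelian corrections: those arising from the tensor $Q$ in $\partial_t g$, from the bracket-dependent zeroth-order part of $\partial_t G$, and from the discrepancy between $R_g-\tfrac14|DG|^2$ and the full $R_{\bar g}$. I would organize these leftover terms according to their dependence on $\delta$ and on the covariant derivatives $DG$.

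The heart of the proof, and the main obstacle, is to show that this collection of nonabelian correction terms either is a total divergence or recombines into additional nonnegative quantities. Here the hypotheses are used decisively: the condition $F=0$ eliminates all terms coupling the connection curvature to the bracket and simplifies the evolution of $\bar A$, while nilpotency of $\mathcal{G}$ guarantees that the adjoint representation is trace-free, so that $\divg$-type expressions built from $\ad$ integrate to zero over the compact base $M$. This is precisely the ``subtle integration by parts'' referenced above: the sign-indefinite cross terms between $DG$ and the bracket, after moving a covariant derivative, produce boundary/divergence terms weighted by $\tr\ad_{(\cdot)}=0$ and thus drop out, and the surviving quadratic bracket terms assemble with a favorable sign. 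I would prove the required algebraic identity relating $\delta$, the bracket, and $DG$ as a separate lemma (the analogue of Lemma \ref{LEMREL1}) and then assemble the pieces to conclude $\frac{d}{dt}\mathcal{W}_{L,+}(\bar g(t),f(t),t)\geq 0$. The delicate point to verify is that \emph{no} uncancelled term of the wrong sign survives once nilpotency and $F=0$ are imposed; I expect this cancellation to be exactly the ``hidden monotonicity'' asserted in Theorem \ref{THMDWL}.
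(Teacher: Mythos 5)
Your overall strategy --- dimensional reduction, a Perelman-style differentiation of the entropy along a suitably gauged flow, and an integration-by-parts lemma to absorb the nonabelian terms --- is the same as the paper's, but your proposal stops exactly where the content of the theorem begins, and the mechanism you propose for the decisive cancellation is not the one that actually works. The whole point of Theorem \ref{THMDWL} is the sign of the cross term
$-t\int_M \big\langle \Ric_{G},\, (D_{\cdot}DG)_{\cdot} - D_{\cdot}G(*,\cdot_{1})D_{\cdot}G(*,\cdot_{1}) - D_{\nabla f}G \big\rangle\, u\, dV_{g}$
that appears when the gauged evolution of $G$ (equation (\ref{EQGRC1})) is substituted into $-\tfrac{1}{4}\partial_{t}\big(t|DG|^{2}\big)$. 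The paper resolves it with Lemma \ref{LEMREL1}: the integrand equals $-\tfrac{1}{4}e^{-f}\big|\delta(G^{-1}DG)\big|^{2} - d^{*}\big(e^{-f}\langle \Ric_{G}, DG\rangle\big)$, so the cross term contributes $+\tfrac{t}{4}\int_M \big|\delta(G^{-1}DG)\big|^{2} u\, dV_{g} \geq 0$. You defer precisely this step (``I would prove the required algebraic identity \dots I expect this cancellation''), so nothing in your argument establishes that no wrong-sign term survives; as written it is a plan for a proof, not a proof.

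Two specifics of the plan would also mislead you if executed literally. First, the divergence term does not drop out because it is ``weighted by $\tr \ad = 0$''; it drops out because $M$ is closed and one integrates an exact term $d^{*}(\cdot)$. Nilpotency enters earlier and more substantially: it is what makes the curvature formulas (\ref{EQRIC})--(\ref{EQRIC2}) valid (vanishing Killing form and mean-curvature vector, not merely unimodularity), and these underlie Lemma \ref{LEMRCDEL1}, i.e. $\langle \Rc_{G}, A\rangle = \tfrac{1}{4}\langle \delta(A), [,]\rangle$, and its differential consequence (\ref{EQDRICS}), which is what the proof of Lemma \ref{LEMREL1} actually rests on. Second, your ``good'' square $2\tau\big|\Rc_{g}+\nabla^{2}f+\tfrac{1}{2\tau}g\big|^{2}$ is not the right one: the $DG$-terms coming from $\partial_{t}g$ cannot be segregated as corrections to be cancelled separately --- they must be completed \emph{into} the square, which in the paper reads $\tfrac{t}{2}\int_M \big|2\Ric_{g}+\tfrac{g}{t}-\tfrac{1}{2}DG(\cdot_{1},\cdot_{2})DG(\cdot_{1},\cdot_{2})+2\nabla^{2}f\big|^{2}u\, dV_{g}$; keeping the pure expander square leaves sign-indefinite cross terms with no mechanism to remove them. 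Finally, note that the paper's bookkeeping is carried out in a specific gauge, equations (\ref{EQGRC1})--(\ref{EQGRC4}), chosen so that $u\, dV_{g}$ is static (Lemma \ref{LEMUC}); that gauge, which includes the drift $-\tfrac{1}{2}g^{-1}DG(\cdot,\cdot)$ in addition to $-\nabla f$, is what makes the steps you describe as ``exactly as in Perelman'' legitimate.
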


As an application, we prove the following result about blowdown limits (see Theorem \ref{THMBLOW} and Corollary \ref{CORWLCON} for more details and results). For notation, we will be using Definition \ref{DEFDEL3}.
\begin{thm} \label{THMBLOWINTRO1}
Let $\bar{g}(t)$ be an invariant Ricci flow  defined for $t \in (0, \infty)$    on a nilpotent $\mathcal{G}$-principal bundle $P\rightarrow M$, which is fibered over a compact manifold. Suppose  the curvature $F_{\bar{A}(t)}=0$.  The following holds true for a blowdown limit of $\bar{g}(t)$: 
\begin{enumerate}
 \item[1)]  $(D_{\cdot}DG)_{\cdot}- D_{\cdot}G(*,\cdot_{1})D_{\cdot}G(*,\cdot_{1})=0$
 \item[2)]  $Ric_{g}  +\frac{g}{2t} -\frac{1}{4}DG(\cdot_{1},\cdot_{2})DG(\cdot_{1},\cdot_{2}) =0 $
\item[3)] $\delta(G^{-1}DG)=0$.
  \end{enumerate}
Moreover, we have
\begin{enumerate}
\item[4)] $\frac{\del G}{\del t}=-2\Ric_{G}$
\item[5)]$g(t)= tg|_{t=1}$.
\end{enumerate}
\end{thm}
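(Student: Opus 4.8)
The plan is to run a Perelman-type rigidity argument driven by the monotonicity behind Theorem~\ref{THMDWL}. That monotonicity should take the sharp integrated form
\[
\frac{d}{dt}\WW_{L,+}(\bar{g}(t),f(t),t)=\int_{M}\big(\,\text{a sum of nonnegative squares}\,\big)\,\frac{e^{-f}}{(4\pi t)^{\frac{n}{2}}}\,dV_{g}\geq 0,
\]
in which the individual square terms are precisely the left-hand sides of identities 1), 2), and 3) (the base term 2) still carrying a Hessian $\N^{2}f$ at this stage). Granting this, the entire problem reduces to showing that along a blowdown limit the functional is \emph{constant} in $t$, since then every square must vanish identically.

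First I would exploit the parabolic scale invariance of $\WW_{L,+}$. Writing the rescaled flow $\bar{g}_{i}(t)=s_{i}^{-1}\bar{g}(s_{i}t)$ with correspondingly rescaled potentials $f_{i}$ and parameter $\tau=t$, invariance gives $\WW_{L,+}(\bar{g}_{i}(t),f_{i}(t),t)=\WW_{L,+}(\bar{g}(s_{i}t),f(s_{i}t),s_{i}t)$. Because $\WW_{L,+}$ is nondecreasing along the original flow by Theorem~\ref{THMDWL} and is bounded above, the limit $\lim_{s\to\infty}\WW_{L,+}(\bar{g}(s),f(s),s)$ exists; hence for each fixed $t$ the rescaled functionals converge to this one constant, and the blowdown limit $\bar{g}_{\infty}(t)$ inherits $\frac{d}{dt}\WW_{L,+}\equiv 0$. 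Pointwise vanishing of the squares then yields 1), 2), 3); in particular the $\N f$ contribution forces $f$ to be spatially constant on the limit, so the Hessian in the base equation drops and 2) takes the stated form.

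The hard part will be making ``the derivative vanishes on the limit'' fully rigorous. This requires producing conjugate-heat solutions $f_{t}$ of (\ref{EQRC4}) with derivative bounds uniform under rescaling, establishing the upper bound (and a matching lower bound) that makes the monotone limit of $\WW_{L,+}$ finite, and---most delicately---showing that both the weighted measures and each square integrand pass to the Cheeger--Gromov limit $M_{\infty}$, which may be noncompact and arise through collapse. This convergence analysis, rather than any algebra, is the genuine obstacle.

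Finally, with 1), 2), 3) in hand I would read off 4) and 5) from the invariant Ricci flow system of Section~\ref{SECRFP1} specialized to $F_{\bar{A}}=0$. The evolution of the fiber metric has the schematic form $\frac{\del G}{\del t}=-2\Ric_{G}+\Phi$, where $\Phi$ is assembled from the vertical second derivatives $(D_{\cdot}DG)_{\cdot}$ and the quadratic $D_{\cdot}G\,D_{\cdot}G$; identity 1) asserts exactly $\Phi=0$, giving 4). Likewise the base evolution reads $\frac{\del g}{\del t}=-2\Ric_{g}+\Psi$ with $\Psi$ collecting the $DG$-corrections (the $F$-terms being absent). Substituting $\Ric_{g}=-\frac{g}{2t}+\frac{1}{4}DG\,DG$ from 2) collapses this to $\frac{\del g}{\del t}=\frac{g}{t}$, whose solution is $g(t)=t\,g|_{t=1}$, which is 5).
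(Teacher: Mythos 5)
Your high-level skeleton --- scale invariance of $\WW_{L,+}$, monotonicity from Theorem \ref{THMDWL}, constancy of the functional along the blowdown limit, hence vanishing of every square --- is exactly the skeleton of the paper's proof of Theorem \ref{THMBLOW}, and your reduction of the convergence issues is consistent with how the paper handles them (the blowdown hypotheses there supply compactness of $M_{\infty}$ and bundle maps, and the conjugate-heat solution is produced as in Lott's Proposition 4.79). The genuine gap lies elsewhere, in the step you dispatch in one clause: passing from ``the squares vanish'' to the $f$-free identities 1), 2). Vanishing of the squares in Theorem \ref{THMDWL} yields only the coupled system
\begin{align*}
&(D_{\cdot}DG)_{\cdot}- D_{\cdot}G(*,\cdot_{1})D_{\cdot}G(*,\cdot_{1}) -D_{\N f}G=0,\qquad
\Ric_{g}+\frac{g}{2t}-\tfrac{1}{4}DG(\cdot_{1},\cdot_{2})DG(\cdot_{1},\cdot_{2})+\N^{2}f=0,\\
&\delta(G^{-1}DG)=0,\qquad DG([*,\cdot],\cdot)=0,
\end{align*}
and no term of the form $|\N f|^{2}$ appears in the monotonicity formula, so nothing ``forces $f$ to be spatially constant'' at this stage; that assertion is unsupported, and it is precisely the missing content. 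In the paper, decoupling $f$ is the whole point of Proposition \ref{PROPWLCON}, which requires two further arguments: (i) tracing the first equation over the fiber gives $d^{*}\bigl(e^{-f}DG(\cdot,\cdot)\bigr)=0$; since $DG(\cdot,\cdot)=dh$ is exact by Lemma \ref{propGRAD}, integrating $0=\int_{M}h\,d^{*}(e^{-f}dh)=\int_{M}e^{-f}|dh|^{2}\,dV_{g}$ forces $DG(\cdot,\cdot)=0$; (ii) a minimum-principle argument: one computes $\del\bar{\R}/\del t$ in two ways (once from the gauged flow equations, once from $\del_{t}\bar{\R}=\Delta_{\bar{g}}\bar{\R}+2|\bar{\Ric}|^{2}$), equates them, and uses $\int_{M}(\R_{g}-\tfrac{1}{4}|DG|^{2}+\tfrac{n}{2t})\,dV_{g}=0$ (the trace of the second equation above) to conclude first $\R_{g}-\tfrac{1}{4}|DG|^{2}+\tfrac{n}{2t}=0$ and then $\Ric_{G}=\bar{\Ric}_{ij}$ and $\bar{\Ric}_{ab}=-\frac{g}{2t}$, which is 1) and 2). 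Only \emph{after} this does one obtain $\N^{2}f=0$ and $D_{\N f}G=0$, whence $f$ is constant (harmonic on a compact manifold); constancy of $f$ is an output of the rigidity, not an input to it.

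The same omission undermines your last paragraph. In the $G$-equation of (\ref{EQRICCI}) the correction $\Phi$ contains the term $\tfrac{1}{2}D_{\cdot}G(\cdot_{1},\cdot_{1})D_{\cdot}G(\eta_{1},\eta_{2})$, which identity 1) does not kill; and in the base equation the combination $(D_{v_{1}}DG)_{v_{2}}(\cdot,\cdot)-\tfrac{1}{2}D_{v_{1}}G(\cdot_{1},\cdot_{2})D_{v_{2}}G(\cdot_{1},\cdot_{2})$ survives the substitution of 2). Both vanish precisely when the fiberwise trace $DG(\cdot,\cdot)$ vanishes, because of the identity $(D_{v_{1}}DG)_{v_{2}}(\cdot,\cdot)=\N_{v_{1}}\bigl(DG(\cdot,\cdot)\bigr)(v_{2})+D_{v_{1}}G(\cdot_{1},\cdot_{2})D_{v_{2}}G(\cdot_{1},\cdot_{2})$. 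So the conclusion $DG(\cdot,\cdot)=0$ --- which appears in Part 3) of Theorem \ref{THMBLOW} but was dropped from the abridged statement you were given --- is not an optional refinement: it is needed both to decouple $f$ from equations 1) and 2) and to deduce 4) and 5).
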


\begin{nota}
As above, we will use $\cdot$'s to  trace over components of a tensor with respect to a metric.
\end{nota}

\begin{rmk}
In \cite{GS3}, we derived Theorems \ref{THMBLOWINTRO1} and \ref{THMBLOW} in the case when $\mathcal{G}$ is $Nil^{3}$ and $dim M=1$ by constructing a functional, $\mathcal{I}$, specifically for this setting.
\end{rmk}

The equations in Parts 1) and  2) are known as the harmonic-Einstein equations \cite{LOTT2}. Moreover, Part 3) is equivalent to the condition that $G^{-1}DG$ is a one form valued derivation when restricted to each fiber of $\mathfrak{G}$.

\subsubsection{Toward a More Complete Functional: Lie Group Case} Theorems \ref{THMBLOWINTRO1} and \ref{THMBLOW}, derived from Lott's functional, are not strong enough to show that the blowdown limit is locally an expanding Ricci soliton, which is expected to be true. The reason is that Part 4) of Theorem \ref{THMBLOWINTRO1} only yields that the limit restricted to each fiber of the principal bundle is some general Ricci flow.  

The aim then is to find a functional that will yield a more complete set of blowdown limit results. As a first step, in Section \ref{SECWLIE2} we construct the following functional for Ricci flows on nilpotent Lie groups. It is composed of both the scalar curvature, $\R_{G}$, and the Ricci curvature, $\Ric_{G}$, of the metric $G$. 

\begin{defn} \label{DEFINTWLIE} Let $\mathcal{G}$ be a nilpotent Lie group with a metric $G$ and let $\tau \in \mathbb{R}$. Define 
\begin{equation*}
\WW_{+}(G,\tau)= \tau \R_{G} +\tau^{2}|\Ric_{G}|^{2}.
\end{equation*}
\end{defn}
We discuss the motivation for the above $\WW_{+}$-functional in Section \ref{SECMOT1}. It is a natural generalization of a functional we built for $Nil^{3}$ and is also inspired by work of Lauret  \cite{LAURET}.

In Proposition \ref{PROPDW}, we prove that this functional is monotone along invariant Ricci flow solutions on nilpotent Lie groups. We then derive the following result about blowdown limits (Proposition \ref{PROPBLOWLIE3}). For some notation, we  use Definition \ref{DEFDEL3} and define $\Rc_{G}=G^{-1}\Ric_{G}$. 
\begin{prop}
An invariant Ricci flow blowdown limit $G_{\infty}(t)$ on a nilpotent Lie group $\mathcal{G}$ satisfies $\delta(\Rc_{G_{\infty}}+ \frac{\mathbb{1}}{2t})=0$.
\end{prop}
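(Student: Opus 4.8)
The plan is to combine the monotonicity of $\WW_{+}$ from Proposition \ref{PROPDW} with the scaling behavior of its two terms under the parabolic rescaling that defines the blowdown limit. Write the blowdown sequence as $G_{i}(t)=s_{i}^{-1}G(s_{i}t)$ with $s_{i}\to\infty$, and run the functional with $\tau=t$. Since rescaling an invariant metric by $c>0$ leaves the Ricci $(0,2)$-tensor unchanged while sending $\R_{G}\mapsto c^{-1}\R_{G}$ and $|\Ric_{G}|^{2}\mapsto c^{-2}|\Ric_{G}|^{2}$, I would first record the identity
\begin{equation*}
\WW_{+}(G_{i}(t),t)= t\,\R_{G_{i}(t)}+t^{2}|\Ric_{G_{i}(t)}|^{2}= (s_{i}t)\,\R_{G(s_{i}t)}+(s_{i}t)^{2}|\Ric_{G(s_{i}t)}|^{2}=\WW_{+}(G(s_{i}t),s_{i}t).
\end{equation*}
Thus evaluating the functional along the rescaled flow at a fixed time $t$ reproduces its value along the original flow at the diverging time $s_{i}t$.

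Next I would pass to the limit. By Proposition \ref{PROPDW} the function $h(s):=\WW_{+}(G(s),s)$ is monotone, so $L:=\lim_{s\to\infty}h(s)$ exists in $[-\infty,+\infty]$. On the other hand, the Cheeger--Gromov convergence $G_{i}(t)\to G_{\infty}(t)$ (smooth convergence of homogeneous metrics) gives, for each fixed $t>0$, that $h(s_{i}t)=\WW_{+}(G_{i}(t),t)\to\WW_{+}(G_{\infty}(t),t)$. Since $s_{i}t\to\infty$, this forces $L=\WW_{+}(G_{\infty}(t),t)$; in particular $L$ is finite and, being independent of $t$, shows that $t\mapsto\WW_{+}(G_{\infty}(t),t)$ is constant. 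Consequently $\tfrac{d}{dt}\WW_{+}(G_{\infty}(t),t)=0$ for all $t$.

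The decisive step is to read off the algebraic consequence of this vanishing from the monotonicity computation. In the proof of Proposition \ref{PROPDW} the derivative $\tfrac{d}{dt}\WW_{+}$ is rearranged (using $\tfrac{d}{dt}\R_{G}=2|\Ric_{G}|^{2}$ on a homogeneous space together with the evolution of $|\Ric_{G}|^{2}$) into a manifestly nonnegative expression that vanishes exactly when $\Rc_{G}+\tfrac{\mathbb{1}}{2t}$ is a derivation of $\mathfrak{g}$. Here it is clarifying to observe that, with $\mu$ the Lie bracket regarded as an element of $\wedge^{2}\mathfrak{g}^{*}\otimes\mathfrak{g}$, the operator $\delta$ of Definition \ref{DEFDEL3} is precisely the infinitesimal action $\delta(A)=\pi(A)\mu$ of $A\in\End\mathfrak{g}$ under the natural $GL(\mathfrak{g})$-representation, so that $\delta(A)=0$ states that $A$ stabilizes $\mu$, i.e. is a derivation. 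Applying the vanishing of $\tfrac{d}{dt}\WW_{+}$ on $G_{\infty}(t)$ then forces $\delta(\Rc_{G_{\infty}}+\tfrac{\mathbb{1}}{2t})=0$, and I note that this is consistent with the expected expanding rate, since $\Rc_{G}=-\tfrac{1}{2t}\mathbb{1}+D$ with $D$ a derivation is exactly the algebraic soliton equation at the blowdown scale.

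I expect the main obstacle to be precisely this last step: rewriting $\tfrac{d}{dt}\WW_{+}$ as a nonnegative quantity whose zero locus is the derivation condition. This is where the special structure of $\tau\R_{G}+\tau^{2}|\Ric_{G}|^{2}$ and the moment-map identities of Lauret \cite{LAURET} relating $\Rc_{G}$ to the $GL(\mathfrak{g})$-action must be used, and it is the content underlying Proposition \ref{PROPDW}; once that identity is in hand, the blowdown argument above is routine. A secondary point requiring care is the continuity of $\WW_{+}$ and of its $t$-derivative under Cheeger--Gromov convergence, which follows from smooth convergence of the homogeneous metrics.
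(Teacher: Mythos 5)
Your proposal is correct and follows essentially the same route as the paper's proof of Proposition \ref{PROPBLOWLIE3}: both use the scale invariance $\WW_{+}(G_{i}(t),t)=\WW_{+}(G(s_{i}t),s_{i}t)$, the monotonicity from Proposition \ref{PROPDW} to guarantee the limit $\lim_{u\to\infty}\WW_{+}(G(u),u)$ exists, Cheeger--Gromov convergence (together with invariance, so the value may be read off at a single point) to conclude that $\WW_{+}(G_{\infty}(t),t)$ is constant in $t$, and finally the identity $\frac{d}{dt}\WW_{+}(G(t),t)=-t^{2}|\delta(\Rc_{G}+\frac{\mathbb{1}}{2t})|^{2}$ to extract the derivation condition. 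The only differences are cosmetic: the paper tracks the evaluation points $a_{\infty}$, $a_{i}$, $e$ explicitly, and your phrase ``manifestly nonnegative expression'' should read that the derivative is \emph{minus} a nonnegative quantity, so $\WW_{+}$ is nonincreasing.
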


An invariant metric on $\mathcal{G}$ that satisfies the condition in the proposition is an example of  a Ricci nilsoliton,  which is  locally  an expanding Ricci soliton (see for example \cite{LAURET} and Section \ref{SECNILSOL3}). We thus have (Proposition \ref{PROPBLOWLIE3}):
\begin{cor}\label{CORINTROLIEBL} Let $G_{\infty}(t)$ be an invariant Ricci flow blowdown limit on a nilpotent Lie group $\mathcal{G}$. The lift of $G_{\infty}(t)$ to the universal cover of $\mathcal{G}$ is an expanding Ricci soliton. 
\end{cor}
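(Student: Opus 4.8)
The plan is to feed the preceding Proposition into the theory of Ricci nilsolitons recalled in Section \ref{SECNILSOL3}. First I would observe that, at each fixed time $t$, the vanishing $\delta\!\left(\Rc_{G_\infty}+\tfrac{\mathbb{1}}{2t}\right)=0$ is, by the note following Definition \ref{DEFDEL3}, exactly the statement that
\[
D_t:=\Rc_{G_\infty(t)}+\frac{\mathbb{1}}{2t}
\]
is a derivation of $\mathfrak{g}$. Rearranging gives the nilsoliton identity $\Rc_{G_\infty(t)}=-\tfrac{1}{2t}\,\mathbb{1}+D_t$ with $D_t\in\operatorname{Der}(\mathfrak{g})$ and soliton constant $c=-\tfrac{1}{2t}<0$, so $G_\infty(t)$ is a Ricci nilsoliton for every $t$.

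Next I would upgrade this algebraic identity to a genuine soliton structure on the universal cover $\til{\mathcal G}$. Fixing a reference time $t_0$, the derivation $D:=D_{t_0}$ exponentiates to the one--parameter group $\exp(sD)\in\Aut(\mathfrak{g})$, which---because $\til{\mathcal G}$ is simply connected---integrates to a one--parameter group of Lie group automorphisms $\Phi_s\in\Aut(\til{\mathcal G})$ with $d\Phi_s|_e=\exp(sD)$. Let $X$ be the vector field generating $\Phi_s$. Since $\Rc_{G_\infty}$ and $\mathbb{1}$ are $G_\infty$--symmetric, so is $D$, and differentiating the pullback of the left--invariant metric yields $\mathcal{L}_X\,\til G_\infty=2\,\til G_\infty(D\,\cdot,\cdot)$. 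Combining this with the nilsoliton identity gives
\[
\Ric_{\til G_\infty}+\tfrac{1}{2}\mathcal{L}_{-X}\,\til G_\infty=-\frac{1}{2t_0}\,\til G_\infty,
\]
the equation of an expanding Ricci soliton. By uniqueness of the invariant Ricci flow through a fixed initial metric (the lifted metrics being complete), the full solution $\til G_\infty(t)$ then coincides with the self--similar flow $a(t)\,\Phi_{b(t)}^{*}\til G_\infty(t_0)$ generated by this soliton datum.

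The main obstacle is precisely the passage to the universal cover. A derivation $D\in\operatorname{Der}(\mathfrak{g})$ integrates to an automorphism of the group only when the group is simply connected; on a general nilpotent $\mathcal{G}$ (for instance a quotient by a lattice) the flow $\Phi_s$, and hence the soliton vector field $X$, need not be globally defined. This is why the soliton structure, and therefore the statement, is asserted only after lifting $G_\infty(t)$ to $\til{\mathcal G}$, where $\exp(sD)$ integrates and where the left--invariant metric is complete so that the self--similar solution exists for all time. The remaining bookkeeping---verifying the $G_\infty$--symmetry of $D$, the precise form of $\mathcal{L}_X\til G_\infty$, and the scaling functions $a(t),b(t)$---is the routine computation recorded in Section \ref{SECNILSOL3}.
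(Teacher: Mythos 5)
Your proposal is correct and takes essentially the same route as the paper: Part 1) of Proposition \ref{PROPBLOWLIE3} gives that $\Rc_{G_\infty}+\frac{\mathbb{1}}{2t}$ is a derivation, and your exponentiation of that derivation to automorphisms of the universal cover, the computation of $\mathcal{L}_X\til{G}_\infty$, and the resulting expanding soliton equation are precisely the content of Lemma \ref{LEMSOL2} in Section \ref{SECNILSOL3}, which is what the paper invokes. The only presentational differences are your Lauret-style sign convention for the nilsoliton constant (the paper's Definition (\ref{DEFNILSOL}) has $c=\frac{1}{t}>0$, equivalent to your $\Rc=-\frac{1}{2t}\mathbb{1}+D_t$) and that you make explicit the ODE-uniqueness step identifying $\til{G}_\infty(t)$ with the self-similar flow, which the paper leaves implicit in deducing Part 2) from Lemma \ref{LEMSOL2}.
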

Corollary \ref{CORINTROLIEBL} is known in the literature through other methods (see \cite{BOHM} and references therein).

\subsubsection{ The $\mathcal{W}_{+}$-Functional for Ricci Flows on Nilpotent Principal Bundles} 
In Section \ref{SECW3}, we introduce  the following $\mathcal{W}_{+}$-functional  to study Ricci flow blowdown limits on nilpotent principal bundles. It is an extension of Lott's functional (Definition \ref{DEFINTWL}) by terms similar to those in the $\mathcal{W}_{+}$-functional for Lie groups (Definition \ref{DEFINTWLIE}).

\begin{defn} Let $\bar{g}$ be an invariant metric on a $\mathcal{G}$--principal bundle $P\rightarrow M$ which is fibered over a compact, $n$-dimensional manifold. Also let  $f \in C^{\infty}(M)$, $a \in \mathbb{R}$ and $\tau \in \mathbb{R}_{>0}$.  Define
\begin{align*}
 \WW_{+}(\bar{g},f, a, \tau) = \mathcal{W}_{L,+}(\bar{g},f, \tau) -a \int_{M} \Big{(}\tau R_{G}+ \tau^{2}|Ric_{G}|^{2}\Big{)} \frac{e^{-f}}{(4\pi \tau)^{\frac{n}{2}}}dV_{g}. 
\end{align*}
\end{defn}

We have the following monotonicity  result for Ricci flows on $Nil^{3}$-bundles (Theorem \ref{THMDW2}):

\begin{thm} Let $\bar{g}(t)$ be an invariant Ricci flow    defined for $t \in (0, \infty)$   on a  $Nil^{3}$-principal bundle $P\rightarrow M$,  which is fibered over a compact, $n$-dimensional  manifold. Suppose the curvature $F_{\bar{A}(t)}=0$.  Also let $f_{t} \in C^{\infty}(M)$ be a solution to the conjugate heat equation (\ref{EQRC4}) and let $a\in [0,1]$. Then $\mathcal{W}_{+}(\bar{g}(t),f(t),a,t)$ is a nondecreasing function of $t$.
\end{thm}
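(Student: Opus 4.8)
The plan is to establish monotonicity by computing $\frac{d}{dt}\WW_{+}(\bar{g}(t),f(t),a,t)$ and showing the derivative is nonnegative. Since $\WW_{+} = \WW_{L,+} - a\,\mathcal{J}$, where $\mathcal{J} := \int_{M}(\tau R_{G}+\tau^{2}|Ric_{G}|^{2})\,\frac{e^{-f}}{(4\pi\tau)^{n/2}}\,dV_{g}$, I would exploit linearity and treat the two pieces separately. The first term $\WW_{L,+}(\bar{g}(t),f(t),t)$ is already known to be nondecreasing by the previously-established theorem (the $\WW_{L,+}$-monotonicity result for nilpotent bundles with $F_{\bar{A}(t)}=0$), so the entire task reduces to controlling the sign of $-a\,\frac{d}{dt}\mathcal{J}$. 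Since $a\in[0,1]\geq 0$, the goal becomes showing that $\frac{d}{dt}\mathcal{J}\leq 0$, or more precisely that the full derivative, combining the genuinely new fiberwise-Lie-group contributions with whatever cross terms arise, stays nonnegative.

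\medskip

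The computation of $\frac{d}{dt}\mathcal{J}$ decomposes naturally into the evolution of the fiber quantity $\tau R_{G}+\tau^{2}|Ric_{G}|^{2}$ and the evolution of the weighted measure $\frac{e^{-f}}{(4\pi\tau)^{n/2}}\,dV_{g}$. For the fiber quantity, I would invoke the Lie-group monotonicity analysis of Section \ref{SECWLIE2}: since $\mathcal{G}=Nil^{3}$ and $F_{\bar{A}(t)}=0$, the metric $G$ evolves fiberwise by a Ricci-type flow (Part 4) of Theorem \ref{THMBLOWINTRO1} governs the limit, but along the actual flow one has the analogous evolution with the connection Laplacian $D_{\cdot}DG$ terms), so the computation of $\frac{\del}{\del t}(\tau R_{G}+\tau^{2}|Ric_{G}|^{2})$ should parallel the derivative computed in Proposition \ref{PROPDW} for $\WW_{+}(G,\tau)$ on the Lie group, up to correction terms coming from the $D$-covariant derivatives over the base. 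For the weighted measure, I would use that $f_{t}$ solves the conjugate heat equation (\ref{EQRC4}), which is precisely what makes the Perelman-type integration by parts in the $\WW_{L,+}$ computation close up; the same conjugate-heat structure should be reused to handle the weight against the new integrand. The key is that the specialization to $Nil^{3}$ makes $R_{G}$ and $|Ric_{G}|^{2}$ into explicit quadratic and quartic expressions in the structure constants (equivalently in $G$ and its fiberwise brackets), so the delicate curvature identities available on the three-dimensional Heisenberg group can be brought to bear.

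\medskip

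The main obstacle I anticipate is controlling the \emph{cross terms} that couple the fiber data to the base. Along a Ricci flow on a nilpotent principal bundle, $G$ evolves not by a clean fiberwise Ricci flow but by $\frac{\del G}{\del t}=-2\Ric_{G}+(\text{base-coupling terms involving } D\text{ and } F)$; the hypothesis $F_{\bar{A}(t)}=0$ kills the curvature contributions, but the connection-derivative terms $D_{\cdot}DG$ and $D_{\cdot}G\,D_{\cdot}G$ (appearing in Part 1) of Theorem \ref{THMBLOWINTRO1}) persist and will enter $\frac{d}{dt}\mathcal{J}$. The strategy is to integrate these terms by parts over $M$ against the weight $e^{-f}/(4\pi\tau)^{n/2}$ and hope that, after using the conjugate heat equation and the $Nil^{3}$ curvature identities, they either cancel against contributions already accounted for in the $\WW_{L,+}$ part or assemble into a manifestly signed expression (a sum of squares of covariant derivatives of $G$). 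The restriction $a\in[0,1]$ strongly suggests that the proof requires balancing a negative contribution from the new term against a positive reserve in the $\WW_{L,+}$ derivative, so that the combined derivative is nonnegative precisely when the coefficient $a$ does not exceed $1$; isolating and matching these two contributions so that the weighted parameter $a$ interpolates correctly is, I expect, the technical heart of the argument.
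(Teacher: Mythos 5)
Your outline is, structurally, the paper's own proof: the paper likewise splits $\WW_{+}=\WW_{L,+}-a\,\mathcal{J}$, differentiates the two pieces of $\mathcal{J}$ separately (Lemmas \ref{LEMT1} and \ref{LEMT2}), handles the weighted measure via Lemma \ref{LEMUC} and the gauged flow equations (\ref{EQGRC1})--(\ref{EQGRC4}), and then adds the result to the formula of Theorem \ref{THMDWL}. Your two hedges are also exactly right: $\frac{d}{dt}\mathcal{J}\leq 0$ is \emph{not} what happens (by Lemma \ref{LEMT1} the derivative of $\mathcal{J}$ contains the positive term $\frac{t}{4}\int_{M}|\delta(G^{-1}DG)|^{2}u\,dV_{g}$, so $\frac{d}{dt}\mathcal{J}$ has mixed signs), and the constraint $a\leq 1$ arises precisely because this term, multiplied by $-a$, must be absorbed by the identical term in the $\WW_{L,+}$ derivative, producing the coefficient $\frac{t}{4}(1-a)$ in Theorem \ref{THMDW2}; every other new term carries a factor of $a$ and is nonnegative, which is where $a\geq 0$ enters.

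However, as written the proposal defers the entire technical content: ``integrate by parts \dots and hope that [the terms] either cancel \dots or assemble into a manifestly signed expression'' is the statement of the theorem, not a proof of it, and the assembly is not automatic --- it hinges on specific $Nil^{3}$ identities that the outline never names. Concretely, the paper needs: (i) $|\Ric_{G}|^{2}=3\R_{G}^{2}$ (equation (\ref{EQRCNIL2})), which is what lets Lemma \ref{LEMT2} reduce $\frac{\del}{\del t}\int_{M} t^{2}|\Ric_{G}|^{2}u\,dV_{g}$ to a computation involving only $\R_{G}$; (ii) Lemma \ref{LEMNILDEL}, $\delta(\Rc_{G}-3\R_{G}\mathbb{1})=0$, equivalently $|\delta(\Rc_{G})|^{2}=-12\R_{G}|\Ric_{G}|^{2}$, which converts the quartic term $12t^{2}\int_{M}\R_{G}|\Ric_{G}|^{2}u\,dV_{g}$ into $-t^{2}\int_{M}|\delta(\Rc_{G})|^{2}u\,dV_{g}$; (iii) the integration-by-parts identities (\ref{EQDRICS}) and (\ref{EQDR4}) (the analogue of Lemma \ref{LEMREL1}), which turn the cross term $\langle \R_{G}\Ric_{G},(D_{\cdot}DG)_{\cdot}-D_{\cdot}G(*,\cdot_{1})D_{\cdot}G(*,\cdot_{1})-D_{\nabla f}G\rangle$ into $\langle\Ric_{G},D_{\cdot}G\rangle\langle\Ric_{G},D_{\cdot}G\rangle-\frac{1}{4}\R_{G}|\delta(G^{-1}DG)|^{2}$ plus an exact divergence --- and note these pieces acquire a definite sign only because $\R_{G}=-\frac{1}{4}|[,]|^{2}\leq 0$; and (iv) the completion of square $-t^{2}|\delta(\Rc_{G})|^{2}+4t|\Ric_{G}|^{2}+\R_{G}=-t^{2}|\delta(\Rc_{G}+\frac{\mathbb{1}}{2t})|^{2}$, which rests on Lemma \ref{LEMRCDEL1}. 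Items (i) and (ii) are where the restriction to $Nil^{3}$, as opposed to a general nilpotent structure group, enters irreducibly; a proof that does not isolate them cannot be completed, so your plan points at the right target but stops before the step that actually decides the sign.
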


Using this, we prove the following stronger result for blowdown limits (Theorem \ref{THMBLOW2}). For notation, we use Definition \ref{DEFDEL3}.
\begin{thm}\label{THMINTBL1}
Let $\bar{g}_{\infty}(t)$ be a blowdown limit of an invariant Ricci flow solution $\bar{g}(t)$ on a $Nil^{3}$-principal bundle with $F_{\bar{A}(t)}=0$.  In addition to the results of Theorems \ref{THMBLOWINTRO1} and \ref{THMBLOW}, the following holds true: $\delta(\Rc_{G_{\infty}} + \frac{\mathbb{1}}{2t})=0$. 
\end{thm}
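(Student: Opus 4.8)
The plan is to extract the new equation $\delta(\Rc_{G_\infty}+\frac{\mathbb{1}}{2t})=0$ from the monotonicity of the $\WW_+$-functional (Theorem \ref{THMDW2}) by the same mechanism that produces the equations of Theorems \ref{THMBLOWINTRO1} and \ref{THMBLOW} from $\WW_{L,+}$, and that produces the fiberwise equation in Proposition \ref{PROPBLOWLIE3} from the Lie group functional. First I would recall the standard blowdown mechanism. The functional $\WW_+$ is invariant under the parabolic rescaling $\bar g_i(t)=s_i^{-1}\bar g(s_it)$ with $\tau=t$: the measure $\frac{e^{-f}}{(4\pi\tau)^{n/2}}dV_g$ is scale invariant, $\WW_{L,+}$ is scale invariant by \cite{LOTT2}, and the added terms $\tau R_G+\tau^2|\Ric_G|^2$ are scale invariant under $G\mapsto cG,\ \tau\mapsto c\tau$ since $\tau R_G$ and $\tau^2|\Ric_G|^2$ are each unchanged. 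Combined with the monotonicity of Theorem \ref{THMDW2} and a uniform upper bound coming from the Cheeger--Gromov convergence to $\bar g_\infty(t)$, the total variation of $t\mapsto\WW_+(\bar g(t),f(t),a,t)$ is finite. Probing the functional at $\tau=s_it\to\infty$ therefore forces its time-derivative to zero along the rescaled flows, and this passes to the limit, giving $\frac{d}{dt}\WW_+(\bar g_\infty(t),f_\infty(t),a,t)=0$ for every $t$.

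Second, I would use the differentiated monotonicity formula underlying Theorem \ref{THMDW2}, which writes $\frac{d}{dt}\WW_+$ as the integral, against $\frac{e^{-f}}{(4\pi t)^{n/2}}dV_g$, of a sum of manifestly nonnegative terms. The contributions inherited from $\WW_{L,+}$ are precisely those whose vanishing yields Parts 1)--5) of Theorem \ref{THMBLOWINTRO1}, all of which already hold on the limit. The additional $-a$ terms contribute, after substituting the fiber Ricci flow $\frac{\del G}{\del t}=-2\Ric_G$, the self-similar scaling $g(t)=t\,g|_{t=1}$, and the derivation condition $\delta(G^{-1}DG)=0$ from Theorem \ref{THMBLOWINTRO1}, a summand proportional to $|\delta(\Rc_G+\frac{\mathbb{1}}{2t})|^2$. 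This is exactly the over-the-base, localized version of the Lie group computation in Proposition \ref{PROPDW}.

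Third, I would conclude. Taking $a\in(0,1]$ (for instance $a=1$) keeps the coefficient of $|\delta(\Rc_G+\frac{\mathbb{1}}{2t})|^2$ strictly positive while, by the choice $a\in[0,1]$ permitted in Theorem \ref{THMDW2}, the entire integrand remains nonnegative. Since $\frac{d}{dt}\WW_+=0$ on the limit, every nonnegative summand must vanish pointwise; in particular $\delta(\Rc_{G_\infty}+\frac{\mathbb{1}}{2t})=0$, which is the asserted equation.

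The main obstacle is the second step: isolating $|\delta(\Rc_G+\frac{\mathbb{1}}{2t})|^2$ as a clean nonnegative summand of $\frac{d}{dt}\WW_+$. One must reconcile the full bundle monotonicity computation of Theorem \ref{THMDW2} with the purely fiberwise computation of Proposition \ref{PROPDW}, checking that the cross terms coupling the base geometry, the fiber metric $G$, and the tensor $DG$ either cancel or recombine into squares once the already-established limit equations are imposed. The constraint $a\in[0,1]$ is precisely what guarantees that this recombination leaves a genuinely nonnegative integrand, so that the vanishing of the total derivative can be read off term by term.
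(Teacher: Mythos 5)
Your proposal is correct and follows essentially the same route as the paper: establish $\frac{\partial}{\partial t}\WW_{+}(\bar{g}_{\infty}(t),f_{\infty}(t),a,t)=0$ on the limit via scale invariance and monotonicity (exactly as in the proof of Theorem \ref{THMBLOW}), then invoke the term-by-term nonnegative decomposition of Theorem \ref{THMDW2} with $a\in(0,1]$ to force the summand $at^{2}\int_{M}|\delta(\Rc_{G}+\frac{\mathbb{1}}{2t})|^{2}\,u\,dV_{g}$ to vanish. The ``main obstacle'' you flag --- recombining the cross terms between base and fiber into nonnegative squares --- is precisely the already-proven content of Theorem \ref{THMDW2} (via Lemmas \ref{LEMT1} and \ref{LEMT2}), so no additional substitution of the limit equations is needed; the decomposition holds along any invariant Ricci flow with $F_{\bar{A}(t)}=0$ on a $Nil^{3}$-bundle.
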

This extra condition is the missing ingredient needed to prove  (Theorem \ref{THMSOL1} and Lemma \ref{LEMSOL1}):

\begin{thm}\label{THMINTBL12}
The Ricci flow blowdown limit in Theorem \ref{THMINTBL1} is locally an expanding Ricci soliton.
\end{thm}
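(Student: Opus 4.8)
The plan is to realize the blowdown limit $\bar{g}_{\infty}(t)$ as an expanding soliton by producing an explicit vector field $\bar{X}$ on the total space $P$ and checking the soliton identity
\begin{equation*}
\Ric_{\bar{g}_{\infty}} + \tfrac{1}{2}\mathcal{L}_{\bar{X}}\bar{g}_{\infty} + \tfrac{1}{2t}\bar{g}_{\infty} = 0.
\end{equation*}
Since $F_{\bar{A}(t)}=0$, the bundle is flat and hence locally isometric to a twisted product with $P \cong M \times \mathcal{G}$, in which the left-invariant coframe $\{\theta^{a}\}$ on the fiber obeys the Maurer--Cartan equations and the fiber metric $G=G_{\infty}(x,t)$ depends on the base point and on time but is constant along each fiber. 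This local triviality, together with the passage to the universal cover of the fiber needed to integrate derivations into honest automorphisms, is what accounts for the word ``locally'' in the statement. The strategy is then to build $\bar{X}$ from the two derivation conditions at our disposal, namely Part 3) of Theorem \ref{THMBLOWINTRO1} and the identity $\delta(\Rc_{G_{\infty}}+\tfrac{\mathbb{1}}{2t})=0$ of Theorem \ref{THMINTBL1}.

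\emph{Construction of the vector field.} By Theorem \ref{THMINTBL1} the endomorphism $D:=\Rc_{G_{\infty}}+\tfrac{\mathbb{1}}{2t}$ is a derivation of the fiber Lie algebra, so on the universal cover it generates a one-parameter group of automorphisms $\exp(-sD)$ of $\mathcal{G}$. I would take $\bar{X}$ to be the vertical vector field whose restriction to each fiber is the infinitesimal generator of this flow. No horizontal component should be required: by Part 5) the base is purely homothetic, $g(t)=t\,g|_{t=1}$, so it already carries the expanding scaling and needs no generating diffeomorphism. Since $\Rc_{G_{\infty}}$ is self-adjoint with respect to $G$, a direct computation on the Lie group gives $\mathcal{L}_{\bar{X}}G = -2\Ric_{G} - \tfrac{1}{t}G$, which is exactly the fiberwise soliton identity.

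\emph{Verification block by block.} I would split the soliton identity into its vertical--vertical, horizontal--horizontal, and mixed components relative to the splitting $\mathfrak{G}\oplus TM$. For the vertical--vertical block, the total Ricci curvature decomposes as the intrinsic fiber Ricci $\Ric_{G}$ plus correction terms built from the base-derivatives $DG$; Part 1), the vanishing of the tension field, is precisely what cancels these corrections and reduces the block to $\Ric_{G}$, after which the identity $\mathcal{L}_{\bar{X}}G=-2\Ric_G-\tfrac1t G$ closes the equation, with Part 4) guaranteeing consistency with the time evolution. For the horizontal--horizontal block, the total Ricci curvature equals $\Ric_{g}-\tfrac14 DG\cd DG$, which by Part 2) equals $-\tfrac{g}{2t}$; since $\bar{X}$ is vertical and $g$ is constant along fibers, $\mathcal{L}_{\bar{X}}\bar{g}_{\infty}$ has no horizontal--horizontal component, and the block reduces to $-\tfrac{g}{2t}+\tfrac{1}{2t}g=0$.

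\emph{The main obstacle.} The crux is the mixed vertical--horizontal block. Here the total Ricci curvature contains terms coupling the base-derivatives of $G$ to the Lie-algebra structure constants, while $\mathcal{L}_{\bar{X}}\bar{g}_{\infty}$ acquires a vertical--horizontal part precisely because the derivation $D$ varies over the base. I expect these two contributions to cancel, and this cancellation should be governed exactly by Part 3): the statement that $G^{-1}DG$ is a one-form valued derivation relates the base-variation of the fiber metric to the structure constants in the manner required to annihilate the mixed Ricci curvature against the mixed Lie-derivative term. Verifying this cancellation, and in particular checking that the $x$-dependence of $\bar{X}$ enters the mixed block only through $G^{-1}DG$, is the delicate computation on which the whole argument rests; once it is in place, assembling the three blocks yields the soliton equation and completes the proof.
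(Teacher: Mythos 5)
Your block-by-block strategy is viable in principle, but the proposal stops exactly at the step you yourself call the crux, and your prediction of how that step closes is not what actually happens. You expect the mixed Ricci term and the mixed Lie-derivative term to be two nonzero quantities that cancel, the latter arising ``precisely because the derivation $D$ varies over the base.'' In fact the opposite is true: Part 3) forces the derivation \emph{not} to vary over the base. By the paper's identity (\ref{EQDRC2}), $\delta(G^{-1}DG)=0$ implies $D\Rc_{G}=0$, so your vertical field $\bar{X}$ (which, in exponential coordinates on the universal cover of the fiber, is the linear field $x\mapsto -\bigl(\Rc_{G}+\tfrac{\mathbb{1}}{2t}\bigr)x$) is parallel along the base, and the mixed component of $\mathcal{L}_{\bar{X}}\bar{g}_{\infty}$ vanishes identically. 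Separately, the mixed Ricci component is $-\tfrac{1}{2}D_{v}G([\cdot,\eta],\cdot)$ (Lemma \ref{p:Ricci} with $F=0$), and this vanishes on its own: it is (\ref{EQL4}) in the blowdown limit, and in any case it equals a trace of $(G^{-1}D_{v}G)\circ\ad_{\eta}$, which is nilpotent (a derivation of a nilpotent Lie algebra preserves the lower central series while $\ad_{\eta}$ strictly lowers it), hence traceless. So there is no cancellation to orchestrate --- both terms are zero --- but isolating $D\Rc_{G}=0$ as the operative fact is indispensable and is absent from your proposal; without it the mixed block simply does not close. Two smaller omissions: your vertical--vertical and horizontal--horizontal reductions quietly use $DG(\cdot,\cdot)=0$ (Theorem \ref{THMBLOW}, Part 3)) to kill the trace term $-\tfrac{1}{4}D_{\cdot}G(\cdot_{1},\cdot_{1})D_{\cdot}G(\eta_{1},\eta_{2})$ and to convert $(D_{v_{1}}DG)_{v_{2}}(\cdot,\cdot)$ into $D_{v_{1}}G(\cdot_{1},\cdot_{2})D_{v_{2}}G(\cdot_{1},\cdot_{2})$.

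The paper's proof (Theorem \ref{THMSOL1} via Lemma \ref{LEMSOL1}) makes this base-independence the centerpiece and takes a different route: in the flat trivialization, $B:=-\Rc_{\tilde{G}_{m}}|_{t=1}-\tfrac{\mathbb{1}}{2}$ is a derivation (by $\delta(\Rc_{G}+\tfrac{\mathbb{1}}{2t})=0$) that is independent of $m$ (by $D\Rc_{G}=0$); then $\psi_{t}=\exp(\ln t\, B)\in \Aut\mathcal{G}$ makes $t\psi_{t}^{*}\bigl(\tilde{G}_{m}|_{t=1}\bigr)$ an invariant Ricci flow by Lemma \ref{LEMSOL2}, so by uniqueness of the invariant (ODE) flow it equals $\tilde{G}_{m}(t)$, and together with $g(t)=tg|_{t=1}$ this gives $\bar{g}(t)=t\hat{\psi}_{t}^{*}(\bar{g}|_{t=1})$ outright. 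This also settles a point your proposal leaves open: the paper's notion of expanding Ricci soliton is the self-similar form (\ref{EQSSRF1}), so even after verifying the soliton identity at each time you would still have to integrate your time-dependent $\bar{X}$ to produce the self-similarity; the paper's construction exhibits the diffeomorphisms explicitly and bypasses the pointwise PDE verification altogether.
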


An important case for which the  $F_{\bar{A}(t)}=0$ assumption in Theorem \ref{THMINTBL1} holds true is when the base manifold of the $Nil^{3}$-principal bundle is one dimensional.    Applying Theorem \ref{THMINTBL12} and Theorem \ref{THMFOURSOL} in Section \ref{SECBLOWSOL2} yields 

\begin{thm}\label{THMINTBL13}
A blowdown limit of an invariant Ricci flow on a  $Nil^{3}$-principal bundle  fibered over $S^{1}$     is locally an expanding Ricci soliton. Moreover, the soliton can be classified as in Theorem \ref{THMFOURSOL}.
\end{thm}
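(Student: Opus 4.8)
The plan is to obtain the result by checking that the one outstanding hypothesis of the already-established Theorems \ref{THMINTBL12} and \ref{THMFOURSOL} — the vanishing of the bundle curvature — holds automatically over a one-dimensional base, and then combining these two results.

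First I would note that over $S^1$ the curvature vanishes purely for degree reasons. The tensor $F = F_{\bar{A}}$ lives in $\Gamma(\wedge^2 T^*M \otimes \mathfrak{G})$, and with $M = S^1$ we have $\wedge^2 T^*M = 0$; hence $F \equiv 0$ for every invariant connection $\bar{A}(t)$ and every $t \in (0,\infty)$. Thus the hypothesis $F_{\bar{A}(t)} = 0$ imposed on the original flow $\bar{g}(t)$ in Theorem \ref{THMINTBL1} is satisfied with no extra argument, exactly as flagged in the text preceding the statement. Since the dimension of the base is preserved under Cheeger--Gromov convergence, the limiting bundle is again fibered over a one-dimensional base (an $S^1$ or its unwrapping $\mathbb{R}$), which is what Theorem \ref{THMFOURSOL} requires.

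With the curvature hypothesis verified, the soliton conclusion is immediate: Theorem \ref{THMINTBL12} asserts precisely that a blowdown limit as in Theorem \ref{THMINTBL1} is locally an expanding Ricci soliton, so the limit $\bar{g}_\infty(t)$ here is locally an expanding Ricci soliton. For the classification, I would then apply Theorem \ref{THMFOURSOL}: the total space has dimension $\dim S^1 + \dim Nil^3 = 4$ and the base is one-dimensional, so the limit lands in exactly the regime classified there, yielding the stated classification.

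I expect the only real subtlety — as opposed to the triviality of $F = 0$ — to be the bookkeeping that matches the output of Theorem \ref{THMINTBL12} to the input of Theorem \ref{THMFOURSOL}. Concretely, one should confirm that the complete set of limit equations established earlier (Parts 1)--5) of Theorem \ref{THMBLOWINTRO1} together with the derivation condition $\delta(\Rc_{G_\infty} + \frac{\mathbb{1}}{2t}) = 0$ from Theorem \ref{THMINTBL1}) constitutes exactly the structure on which the four-dimensional classification operates, and that the passage to the universal cover needed to realize the soliton globally (as in Corollary \ref{CORINTROLIEBL}) is consistent with that classification. Once this matching is in place, no further analysis is needed, since all the substantive work resides in the cited theorems.
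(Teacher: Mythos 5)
Your proposal is correct and follows the paper's own route: the paper likewise observes that $\dim M=1$ forces $F_{\bar{A}(t)}=0$ (since $\wedge^{2}T^{*}M=0$), and then obtains the soliton conclusion from Theorem \ref{THMSOL1} (the body version of Theorem \ref{THMINTBL12}) and the classification from Theorem \ref{THMFOURSOL}. The bookkeeping you flag as a possible subtlety is indeed handled in the paper inside the proof of Theorem \ref{THMFOURSOL} itself, where Parts 2)--5), 8), b) and c) of Theorem \ref{THMBLOW2} are fed into the explicit ODE analysis, so no additional matching argument is needed at the level of this statement.
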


We then have 
\begin{mres}
Theorems \ref{THMINTBL13} and \ref{THMFOURSOL}, combined with Lott's results in the abelian setting \cite{LOTT2}, yield a complete local classification of invariant Ricci flow blowdown limits on four dimensional, nilpotent principal bundles fibered over compact base manifolds.
\end{mres}

For future work, we are currently analyzing the monotonicity of  $\mathcal{W}_{+}$ and other functionals along any invariant Ricci flow  on nilpotent principal bundles.

We now begin by providing background on Ricci flow blowdown limits and Ricci solitons. 

\subsubsection{Acknowledgments}
I thank John Lott for helpful discussions. I also thank Jeffrey Streets for introducing me to Lott's papers \cite{LOTT1,LOTT2} and for previous discussions.

\section{Background: Ricci Flow Blowdown Limits and Expanding Solitons}

\subsection{ Ricci Flow Blowdown Limits}
A Ricci flow solution $g(t)$ on a manifold $M$ is a time dependent Riemannian metric that satisfies
\begin{align}
\frac{\partial g }{\partial t}= - 2\Ric_{g(t)},
\end{align}
where $\Ric_{g(t)}$ is the Ricci curvature of $g(t)$.

In defining Ricci flow blowdown limits, we will be using the following type of convergence for a sequence of  time dependent metrics \cite{Ham1}.

\begin{defn}\label{DEFCGCON}
For each $i \in \mathbb{N}$, let $M_{i}$ be a manifold equipped with a time dependent Riemannian metric $g_{i}(t)$, for $t \in I \subset \mathbb{R}$, and let $m_{i} \in M_{i}$. $(M_{i},m_{i}, g_{i}(t))$ Cheeger-Gromov converges to a pointed manifold with a time dependent metric $(M_{\infty},m_{\infty}, g_{\infty}(t))$ if there exists

$\bullet$ an exhaustion $\{U_{i}\}$  of open subsets of $M_{\infty}$

$\bullet$ for each $i$, a diffeomorphism $\phi_{i}: U_{i} \rightarrow  V_{i}$, where $V_{i}:=\phi_{i}(U_{i})$ is open in  \hspace*{5.5mm} $M_{i}$ and  $\phi_{i}(m_{\infty})= m_{i}$  \\
such that 
$\phi_{i}^{*}g_{i}(t)$ converges to $g_{\infty}(t)$
 smoothly and uniformly on compact subsets of $M_{\infty} \times I$.
\end{defn}

We then have

\begin{defn} \label{DEFBLOW}
Let $g(t)$ be a Ricci flow solution on a manifold $M$. Let $\{s_{i}\}$ be a sequence of  positive numbers such that $\lim_{i \rightarrow \infty} s_{i}=\infty$.  Suppose the sequence of Ricci flows $g_{i}(t):= s_{i}^{-1}g(s_{i}t)$ Cheeger-Gromov converges to  $g_{\infty}(t)$ on a manifold $M_{\infty}$. We call $g_{\infty}(t)$ a sequential Ricci flow blowdown limit of $g(t)$. 
\end{defn}

\subsection{Expanding Ricci Solitons} \label{SECSOL1}

We now provide background on expanding Ricci solitons and self-similar Ricci flow solutions. 

First recall, 
\begin{defn} A  Ricci soliton metric on a manifold $M$ is a Riemannian metric $g_{0}$ that satisfies
\begin{align} \label{EQRS1}
Ric_{g_{0}} +\frac{c g_{0}}{2} + \frac{\mathcal{L}_{V}g_{0}}{2}=0,
\end{align}
for some vector field $V$ on $M$ and $c\in \mathbb{R}$. In the case when $c>0$, we call $g_{0}$ an expanding  soliton metric.
\end{defn}

Let $(M,g_{0})$ be an expanding  soliton metric that satisfies (\ref{EQRS1}) with $c>0$. Suppose, for $t>0$, $V(t):=\frac{V}{ct}$ is generated by a family of diffeomorphisms $\psi_{t}$ on $M$, normalized by $\psi_{\frac{1}{c}}=\mathbb{1}$.  Then for  $t>0$,
\begin{align}
\label{EQSSRF1} g(t)=ct \psi_{t}^{*}g_{0}
\end{align}
is an expanding, self-similar Ricci flow solution that satisfies
\begin{align}
Ric_{g(t)} +\frac{g(t)}{2t} + \frac{\mathcal{L}_{V(t)}g(t)}{2}=0,
\end{align}
with $g(\frac{1}{c})=g_{0}$.
\begin{nota}
We will call a self-similar Ricci flow solution of the form (\ref{EQSSRF1}), where $c$ can be taken to equal one, an expanding Ricci soliton. 
\end{nota}
\section{Background: Ricci Curvature and Nilpotent Lie Groups}

\subsection{The $\delta$-map}
Given a real vector space $V$, consider the action of $GL(V)$ on $\wedge^{2}V^{*} \otimes V$:
\begin{align*}
A \cdot \mu= A\mu(A^{-1}*, A^{-1}*).
\end{align*}
Differentiating, we obtain the Lie algebra homomorphism 
$\psi: \mathfrak{gl}(V) \rightarrow \mathfrak{gl}(\wedge^{2}V^{*} \otimes V)$,
 given by 
\begin{align}
\psi(A)\mu= A\mu(*,*)- \mu(A*,*)- \mu(*,A*).
\end{align}
For a fixed $\mu \in \wedge^{2}V^{*} \otimes V$, we define $\delta_{\mu}: \End V \rightarrow \wedge^{2}V^{*} \otimes V$ by
\begin{align} \label{DEFDEL}
\delta_{\mu}(A)= \psi(A) \mu. 
\end{align}

\begin{nota} \label{RMKDEL}
When $V$ is a Lie algebra $\mathfrak{g}$ with Lie bracket $[\hspace{.1mm},\hspace{.1mm}]$, we will denote $\delta_{[\hspace{.1mm}, \hspace{.1mm}]}$ by $\delta$. In this case, the kernel of $\delta$ is the set of derivations of $\mathfrak{g}$. 
\end{nota}

In the following, we list some basic properties of $\psi$ and $\delta_{\mu}$ when $V$ is equipped with an inner product. For this, we will be using the natural induced inner product on $\wedge^{2}V^{*} \otimes V$ and will also use a $``*"$  to denote the adjoint of a linear map.

\begin{lemma} \label{LEMPSI} Let $V$ be a real vector space with an inner product. Given $A,B \in \End V$ and $\mu_{1},\mu_{2} \in \wedge^{2}V^{*} \otimes V$, we have
\begin{itemize}
 \item[1)] $ \psi(A)^{*}= \psi(A^{*})$
 \item[2)] $\big< \delta_{\mu_{1}}(A^{*}), \delta_{\mu_{2}}(B^{*}) \big>
=\big< \delta_{\mu_{1}}(B), \delta_{\mu_{2}}(A) \big> + \big<\mu_{1}, \delta_{\mu_{2}}([A,B^{*}]) 
    \big>.$
\end{itemize}
\end{lemma}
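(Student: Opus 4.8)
The plan is to prove Part 1) first and then deduce Part 2) from it together with the fact, already recorded in the setup, that $\psi$ is a Lie algebra homomorphism. The guiding principle throughout is that every inner product of two terms of the form $\delta_{\mu}(X) = \psi(X)\mu$ can be rewritten, via the adjoint, as the pairing of a single $\mu$ against a composition of $\psi$'s acting on the other; Part 1) is exactly what makes these adjoints computable.

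For Part 1), I would exploit that $\psi$ is the infinitesimal form of the natural $GL(V)$-action, hence the tensor-product representation of $\mathfrak{gl}(V)$ on $V^{*}\otimes V^{*}\otimes V$ restricted to the invariant subspace $\wedge^{2}V^{*}\otimes V$. Writing $\psi(A)=\rho_{V^{*}}(A)\otimes 1\otimes 1+1\otimes\rho_{V^{*}}(A)\otimes 1+1\otimes 1\otimes\rho_{V}(A)$, where $\rho_{V}(A)=A$ is the standard action and $\rho_{V^{*}}(A)$ the dual action, one uses the inner product to identify $V^{*}\cong V$, under which $\rho_{V^{*}}(A)=-A^{*}$ while $\rho_{V}(A)^{*}=A^{*}$. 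Since the adjoint of a tensor product of operators is the tensor product of the adjoints with respect to the induced inner product, taking adjoints sends $A\mapsto A^{*}$ in the last slot and $-A^{*}\mapsto -A$ in each of the first two; comparing with the analogous expansion of $\psi(A^{*})$ yields $\psi(A)^{*}=\psi(A^{*})$. (Alternatively, one can verify the identity by a routine component computation in an orthonormal basis, expanding both sides of $\langle\psi(A)\mu,\nu\rangle=\langle\mu,\psi(A^{*})\nu\rangle$.)

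For Part 2), the idea is to push every operator onto a single slot using Part 1), so that the claim collapses to a commutator identity. Starting from $\langle\delta_{\mu_{1}}(A^{*}),\delta_{\mu_{2}}(B^{*})\rangle=\langle\psi(A^{*})\mu_{1},\psi(B^{*})\mu_{2}\rangle$, Part 1) (in the form $\psi(B^{*})^{*}=\psi(B)$) rewrites the left-hand side as $\langle\psi(B)\psi(A^{*})\mu_{1},\mu_{2}\rangle$. The same maneuver turns the first right-hand term into $\langle\psi(A^{*})\psi(B)\mu_{1},\mu_{2}\rangle$ and the second into $\langle\psi([A,B^{*}]^{*})\mu_{1},\mu_{2}\rangle$. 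Thus the identity reduces to $\psi(B)\psi(A^{*})=\psi(A^{*})\psi(B)+\psi([A,B^{*}]^{*})$ applied to $\mu_{1}$. To close it, I would compute $[A,B^{*}]^{*}=BA^{*}-A^{*}B=[B,A^{*}]$ and invoke the homomorphism property $\psi([B,A^{*}])=[\psi(B),\psi(A^{*})]=\psi(B)\psi(A^{*})-\psi(A^{*})\psi(B)$, which is precisely the required relation.

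I expect the only genuinely delicate points to be the adjoint bookkeeping in Part 2) — getting the sign in $[A,B^{*}]^{*}=[B,A^{*}]$ correct and applying the homomorphism in the right order — and, for Part 1), confirming that the inner product on $\wedge^{2}V^{*}\otimes V$ is the one induced from the tensor factors, so that $\psi(A)$ and its adjoint both preserve this subspace and the adjoint of the restriction is the restriction of the adjoint. These are exactly the steps where sign or normalization errors tend to hide, so I would sanity-check them on a low-dimensional example before writing the final computation.
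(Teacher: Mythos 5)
Your proof is correct. The paper itself states Lemma \ref{LEMPSI} without proof (it is presented as a list of basic properties), so there is no argument to compare against; but your reasoning is exactly the natural one and all the delicate points check out. In Part 1), the identification $V^{*}\cong V$ does send the dual action to $\rho_{V^{*}}(A)=-A^{*}$, and since $\psi(A^{*})$ also preserves the invariant subspace $\wedge^{2}V^{*}\otimes V$, the adjoint of the restriction is indeed the restriction of the adjoint (any overall normalization of the induced inner product is irrelevant here). In Part 2), your reduction of the identity to the operator equation $\psi(B)\psi(A^{*})=\psi(A^{*})\psi(B)+\psi([A,B^{*}]^{*})$ is valid, and this follows from $[A,B^{*}]^{*}=BA^{*}-A^{*}B=[B,A^{*}]$ together with the homomorphism property $\psi([B,A^{*}])=[\psi(B),\psi(A^{*})]$, which holds because $\psi$ is the differential of the $GL(V)$-representation defined in the paper.
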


\subsection{Ricci Curvature}\label{SECBACK1}

Let $\mathcal{G}$ be a nilpotent Lie group with an invariant Riemannian metric $G$. 
The Ricci curvature and the scalar curvature of $G$ are given by:  
   \begin{align}
   \label{EQRIC} \Ric_{G}&= -\frac{1}{2}G([*,\cdot],[*,\cdot]) + \frac{1}{4}G([\cdot_{1},\cdot_{2}],*)G([\cdot_{1},\cdot_{2}],*)
  \\ \label{EQRIC2} \R_{G}&=-\frac{1}{4}|[,]|^{2}.
\end{align}
\begin{nota}\label{NOTADOT}
As in (\ref{EQRIC}), we will use $\cdot$'s to  trace over components of a tensor with respect to a metric.
\end{nota}

We further define
\begin{align} \label{EQRCN}
 &\Rc_{G}=G^{-1}\Ric_{G} \in \Gamma(\End T\mathcal{G}),
\end{align}
so that $G(\Rc_{G}*,*)=\Ric_{G}(*,*)$.
\begin{rmk} \label{RMKRIC1}\mbox{}
\begin{itemize}
\item[1)]Given the $\mathcal{G}$-invariance, we will at times implicitly evaluate the above tensors at the identity element of $\mathcal{G}$.
\item[2)] When $\mathfrak{g}$ is a nilpotent Lie algebra with a metric $G$, we define $\Ric_{G} \in \mathfrak{g}^{*} \otimes \mathfrak{g}^{*}$, $R_{G} \in \mathbb{R}$  and $\Rc_{G} \in \End \mathfrak{g}$ by the same formulas as in (\ref{EQRIC})--(\ref{EQRCN}).
\end{itemize}
\end{rmk}

As in (\ref{EQRCN}), we will be using the following notation.
\begin{nota} \label{NOTATILDE}
Let $V$ be a vector space with an inner product $g$ and let $S \in V^{*} \otimes V^{*}$. We define $g^{-1}S \in \End V$ by setting $g( (g^{-1}S) v_{1},v_{2})=S(v_{1},v_{2})$, for all $v_{i}\in V$. 
\end{nota} 

\begin{example}
Let $G$ be an invariant metric on the three dimensional Heisenberg group $\mathcal{G}=Nil^{3}$. Choose $\{\eta_{i}\}_{i=1,2,3}$ to be an orthonormal basis for $(\mathfrak{g},G)$ such that $\eta_{3}$ spans the center of $\mathfrak{g}$, $Z(\mathfrak{g})$. Using $[\eta_{1}, \eta_{2}] \in Z(\mathfrak{g})$ and (\ref{EQRIC}), one readily shows that 
\begin{align}
 \label{EQRCNIL}&\Rc_{G}= \R_{G} \mathbb{1} \oplus -\R_{G}\mathbb{1}, \text{ based on the splitting }
 \mathfrak{g}=Z(\mathfrak{g}) ^{\perp}\oplus Z(\mathfrak{g})
 \\ \label{EQRCNIL2}&|\Ric_{G}|^{2}= 3\R_{G}^{2}.
 \end{align}
\end{example}

With reference to Notation \ref{RMKDEL}, we derive a basic relation between $\Rc_{G}$ and $\delta$ (see, for example, \cite{LAURET}).   
\begin{lemma} \label{LEMRCDEL1} Let $\mathfrak{g}$ be a nilpotent Lie algebra  with a metric $G$ and let $A \in \End \mathfrak{g}$. The following holds true
\begin{align}
\big< \Rc_{G},A \big> = \frac{1}{4} \big< \delta(A), [,] \big>.
\end{align}
\end{lemma}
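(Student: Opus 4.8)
The plan is to reduce the identity to a direct computation in a $G$-orthonormal basis $\{e_i\}$ of $\mathfrak{g}$ and to match the two structural pieces of the Ricci curvature (\ref{EQRIC}) against the three terms produced by $\delta$. Throughout I would use the inner product on $\wedge^{2}\mathfrak{g}^{*}\otimes\mathfrak{g}$ normalized so that $|[,]|^{2}=\sum_{i,j}G([e_i,e_j],[e_i,e_j])$, which is exactly the convention under which the scalar-curvature formula (\ref{EQRIC2}) holds; this is what fixes the constant $\tfrac14$.

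First I would unwind the right-hand side. Using $\delta(A)(e_i,e_j)=A[e_i,e_j]-[Ae_i,e_j]-[e_i,Ae_j]$, write
\begin{align*}
\langle \delta(A),[,]\rangle = \sum_{i,j}G(A[e_i,e_j],[e_i,e_j]) - \sum_{i,j}G([Ae_i,e_j],[e_i,e_j]) - \sum_{i,j}G([e_i,Ae_j],[e_i,e_j]).
\end{align*}
The antisymmetry of the bracket together with the relabeling $i\leftrightarrow j$ shows that the last two sums coincide, so the right-hand side equals $\sum_{i,j}G(A[e_i,e_j],[e_i,e_j]) - 2\sum_{i,j}G([Ae_i,e_j],[e_i,e_j])$. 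As a conceptual check, Lemma \ref{LEMPSI}(1) gives at once that only the $G$-symmetric part of $A$ contributes, since $\langle \psi(K)[,],[,]\rangle=0$ whenever $K^{*}=-K$; this is consistent with $\Rc_{G}$ being $G$-self-adjoint.

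Next I would identify each surviving sum with a piece of (\ref{EQRIC}). Writing $\Ric^{(1)}$ for the first term $-\tfrac12 G([*,\cdot],[*,\cdot])$ and $\Ric^{(2)}$ for the second term $\tfrac14 G([\cdot_1,\cdot_2],*)G([\cdot_1,\cdot_2],*)$, the symmetry of $G$ (so that $G(Av,v)=G(A^{*}v,v)$) together with completeness $\sum_i G(u,e_i)G(v,e_i)=G(u,v)$ gives $\sum_{i,j}G(A[e_i,e_j],[e_i,e_j]) = 4\sum_i \Ric^{(2)}(Ae_i,e_i)$ and $\sum_{i,j}G([Ae_i,e_j],[e_i,e_j]) = -2\sum_i \Ric^{(1)}(Ae_i,e_i)$. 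Substituting yields
\begin{align*}
\langle \delta(A),[,]\rangle = 4\sum_i \big(\Ric^{(1)}+\Ric^{(2)}\big)(Ae_i,e_i) = 4\sum_i \Ric_{G}(Ae_i,e_i) = 4\,\tr(\Rc_{G}A) = 4\,\langle \Rc_{G},A\rangle,
\end{align*}
which is the assertion.

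The main obstacle is bookkeeping rather than anything conceptual: one must keep the combinatorial factors straight so that the coefficient comes out exactly $\tfrac14$ — this is precisely where the normalization of the inner product on $\wedge^{2}\mathfrak{g}^{*}\otimes\mathfrak{g}$, pinned down by (\ref{EQRIC2}), enters — and one must carefully distinguish $A$ from its adjoint $A^{*}$ when moving $A$ across an inner-product slot. Because $\Rc_{G}$ is $G$-self-adjoint, this distinction is invisible in the final pairing, but it appears in the intermediate steps and must be resolved via the symmetry of $G$.
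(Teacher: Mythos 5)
Your proof is correct and follows essentially the same route as the paper's: both expand $\big<\delta(A),[,]\big>$, use antisymmetry of the bracket to merge the two cross terms into one with a factor of $2$, and then match the two resulting traces against the two terms of (\ref{EQRIC}). The only difference is notational — you compute in a $G$-orthonormal basis and conclude via $\tr(\Rc_{G}A)$, while the paper substitutes $A=G^{-1}S$ and matches against $4\big<\Ric_{G},S\big>$ using its dot-trace convention.
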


\begin{proof}
Using Notation \ref{NOTATILDE}, let $A= G^{-1}S$, for $S \in  \mathfrak{g}^{*} \otimes \mathfrak{g}^{*}$, and consider  
\begin{align*}
\big< \delta(A), [,] \big>
&= G \big( A[\cdot_{1},\cdot_{2}] - [A\cdot_{1},\cdot_{2}]- [\cdot_{1},A\cdot_{2}], [\cdot_{1},\cdot_{2}] \big)
\\&= S([\cdot_{1},\cdot_{2}], [\cdot_{1},\cdot_{2}])
-2G([\cdot_{1},\cdot], [\cdot_{2}, \cdot_{}]) S(\cdot_{1},\cdot_{2}).
\end{align*}
This equals $4 \big< \Ric_{G},S \big>= 4 \big< \Rc_{G},A \big>.$
\end{proof}

When analyzing functionals along the Ricci flow in forthcoming sections, we will need the following lemma. We will continue to use Notation \ref{NOTATILDE} and a $``*"$  to denote the adjoint of a linear map.
\begin{lemma} \label{LEMDRCA}  Let $G(t)$ be a time dependent metric on a nilpotent Lie algebra $\mathfrak{g}$, $S \in \mathfrak{g}^{*} \otimes \mathfrak{g}^{*}$ and $A \in \End \mathfrak{g}$. The following holds true: 
\begin{align} 
\big< \frac{d }{dt}\Ric_{G}, S \big> &=  
\label{LABDRIC1} \frac{1}{4}\big{<} \delta\big( G^{-1}\frac{dG}{dt} \big), \delta(G^{-1}S) \big{>}
+ \big< \Ric_{G}(*,*), \frac{d G}{dt}(*,\cdot)S(\cdot, *) \big> 
\\ \label{LABDRC1} \big<\frac{d}{dt}\Rc_{G}, A &\big>= \frac{1}{4} \big< \delta \big(G^{-1}\frac{d G}{dt} \big), \delta(A^{*}) \big>.
\\ \nonumber   \text{Moreover, we have} 
\\ \label{EQDRIC3} \frac{d}{dt}|\Ric_{G}|^{2}&= \frac{1}{2}\big{<} \delta \big(G^{-1} \frac{dG}{dt} \big), \delta(\Rc_{G}) \big{>}
\\  \label{EQDR}\frac{d}{dt}\R_{G}&=- \big< \Ric_{G},\frac{dG}{dt} \big>.
\end{align}
\end{lemma}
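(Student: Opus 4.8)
The plan is to take (\ref{LABDRC1}) as the central identity and deduce the other three from it. The key device is a gauge (or ``moving metric'') reduction. Fix the time $t_{0}$ of interest together with a $G(t_{0})$--orthonormal frame, and write $G(t)(u,v)=G(t_{0})(P(t)u,P(t)v)$ with $P(t)$ a $G(t_{0})$--self-adjoint positive path satisfying $P(t_{0})=\Id$ and $\dot P(t_{0})=\tfrac12 X$, where $X:=G^{-1}\tfrac{dG}{dt}$ is the ($G$--self-adjoint) operator appearing throughout the statement. The map $P(t)$ is simultaneously an isometry from $G(t)$ to the fixed metric $G(t_{0})$ and a Lie-algebra isomorphism from $[,]$ to the transported bracket $\mu(t)(u,v):=P(t)[P(t)^{-1}u,P(t)^{-1}v]$. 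By naturality of the Ricci endomorphism under metric Lie algebra isomorphisms, $\Rc_{G(t_{0}),\mu(t)}=P(t)\,\Rc_{G(t),[,]}\,P(t)^{-1}$, and differentiating at $t_{0}$ gives $\tfrac{d}{dt}\Rc_{G}=\tfrac{d}{dt}\Rc_{G(t_{0}),\mu}-\tfrac12[X,\Rc_{G}]$. A direct differentiation of $\mu(t)$ shows $\dot\mu(t_{0})=\delta(\dot P)=\tfrac12\delta(X)=\tfrac12\psi(X)[,]$, so all the bracket variation points in the single direction $\delta(G^{-1}\tfrac{dG}{dt})$.

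To evaluate $\tfrac{d}{dt}\Rc_{G(t_{0}),\mu}$ I would differentiate the identity of Lemma \ref{LEMRCDEL1} written with the \emph{fixed} background metric $G(t_{0})$, so the inner products are constant in $t$: $\langle \Rc_{G(t_{0}),\mu},A\rangle=\tfrac14\langle\psi(A)\mu,\mu\rangle$. Using $\dot\mu=\tfrac12\psi(X)\mu$ and Lemma \ref{LEMPSI}(1), namely $\psi(A)^{*}=\psi(A^{*})$, to move $\psi(A)$ onto the second factor, this produces $\tfrac18\langle\delta(X),\delta(A^{*})\rangle+\tfrac18\langle\delta(A),\delta(X)\rangle$. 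I would then invoke Lemma \ref{LEMPSI}(2) to rewrite $\langle\delta(A),\delta(X)\rangle$ as $\langle\delta(X),\delta(A^{*})\rangle+\langle\mu,\delta([A^{*},X])\rangle$, and Lemma \ref{LEMRCDEL1} to turn the last term into $4\langle\Rc_{G},[A^{*},X]\rangle$. Assembling everything yields $\langle\tfrac{d}{dt}\Rc_{G},A\rangle=\tfrac14\langle\delta(X),\delta(A^{*})\rangle+\tfrac12\langle\Rc_{G},[A^{*},X]\rangle-\tfrac12\langle[X,\Rc_{G}],A\rangle$, which is (\ref{LABDRC1}) precisely when the two commutator terms cancel.

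The main obstacle is exactly this cancellation: one must show $\langle[X,\Rc_{G}],A\rangle=\langle\Rc_{G},[A^{*},X]\rangle$ for arbitrary $A\in\End\mathfrak{g}$. This is the only point where the self-adjoint structure enters. Since $X$ and $\Rc_{G}$ are both $G(t_{0})$--self-adjoint, cyclicity of the trace together with $[X,\Rc_{G}]^{*}=-[X,\Rc_{G}]$ reduces both sides to $\tr([\Rc_{G},X]A)$, and the identity holds. I expect the bookkeeping here---tracking which argument carries the adjoint and confirming the trace identity for general (not necessarily symmetric) $A$---to be the delicate step; everything else is linear algebra assembled from the two preceding lemmas.

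With (\ref{LABDRC1}) in hand the remaining formulas follow formally. For (\ref{LABDRIC1}) I would write $\Ric_{G}(u,v)=G(\Rc_{G}u,v)$ and apply the product rule: pairing against $S=G(A\cdot,\cdot)$ with $A=G^{-1}S$ (which is $G$--self-adjoint, so $A^{*}=A$), the $\tfrac{d}{dt}\Rc_{G}$--part gives $\tfrac14\langle\delta(G^{-1}\tfrac{dG}{dt}),\delta(G^{-1}S)\rangle$ via (\ref{LABDRC1}), while the $\tfrac{dG}{dt}$--part reorganizes, after relabeling indices and using the symmetry of $\Ric_{G}$, into $\langle\Ric_{G}(*,*),\tfrac{dG}{dt}(*,\cdot)S(\cdot,*)\rangle$. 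Equation (\ref{EQDRIC3}) is the case $S=\Ric_{G}$ of (\ref{LABDRIC1}): the trace term of (\ref{LABDRIC1}), doubled by the product rule on $|\Ric_{G}|^{2}=\langle\Ric_{G},\Ric_{G}\rangle_{G}$, is cancelled exactly by the $t$--derivative of the metric inside the inner product, leaving $\tfrac12\langle\delta(G^{-1}\tfrac{dG}{dt}),\delta(\Rc_{G})\rangle$ (using $G^{-1}\Ric_{G}=\Rc_{G}$). Finally, (\ref{EQDR}) follows most quickly by differentiating $\R_{G}=-\tfrac14|[,]|^{2}$ from (\ref{EQRIC2}) in a $G(t_{0})$--orthonormal frame, a short index computation giving $-\langle\Ric_{G},\tfrac{dG}{dt}\rangle$; alternatively it is the trace $S=G$ of (\ref{LABDRIC1}), using $\delta(\Id)=-[,]$ and Lemma \ref{LEMRCDEL1}.
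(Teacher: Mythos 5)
Your proposal is correct, but your route to the central identity (\ref{LABDRC1}) is genuinely different from the paper's. The paper differentiates the identity $\langle \Rc_{G},A\rangle=\tfrac{1}{4}\langle\delta(A),[,]\rangle$ of Lemma \ref{LEMRCDEL1} directly in $t$, keeping the bracket fixed and letting the metric vary inside both pairings; this variation produces a single commutator term $\langle\Rc_{G},[G^{-1}\tfrac{dG}{dt},A]\rangle$, which is converted back via Lemma \ref{LEMRCDEL1} and absorbed in one application of Lemma \ref{LEMPSI}(2). You instead gauge the metric to be constant and transport the time dependence onto the bracket, $\dot\mu=\tfrac{1}{2}\delta(X)$, which is precisely Lauret's bracket-flow picture --- the very picture the paper credits, in the remark following Proposition \ref{PROPDW}, for Lauret's derivation of (\ref{EQDRIC3}). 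What your route buys is conceptual transparency: the whole variation is visibly channelled through the single direction $\delta(X)$ in bracket space, and the conjugation formula $\Rc_{G(t_{0}),\mu(t)}=P\,\Rc_{G(t),[,]}\,P^{-1}$ replaces the paper's bookkeeping of metric-dependent pairings. What it costs is two correction terms (one from the conjugation, one from Lemma \ref{LEMPSI}(2)) whose cancellation $\langle\Rc_{G},[A^{*},X]\rangle=\langle[X,\Rc_{G}],A\rangle$ must be checked separately; your verification via anti-self-adjointness of $[X,\Rc_{G}]$ and cyclicity of the trace is correct, and you rightly flag it as the delicate step --- it is a step the paper's direct computation never encounters. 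The downstream deductions essentially coincide with the paper's: (\ref{LABDRIC1}) by the product rule on $\Ric_{G}=G(\Rc_{G}\,\cdot,\cdot)$, (\ref{EQDRIC3}) as the case $S=\Ric_{G}$ with cancellation of the trace terms, and (\ref{EQDR}) by a direct frame computation (the paper's choice) --- your alternative derivation of (\ref{EQDR}) as the $S=G$ trace of (\ref{LABDRIC1}), using $\delta(\mathbb{1})=-[,]$, is a small bonus not in the paper. The only minor divergence in the middle step is that the paper feeds its pre-Lemma-\ref{LEMPSI} expression for $\langle\tfrac{d}{dt}\Rc_{G},A\rangle$ into the proof of (\ref{LABDRIC1}) and lets the resulting commutator merge with the product-rule term, whereas you invoke (\ref{LABDRC1}) itself with the self-adjoint choice $A=G^{-1}S$; both reduce to the same index relabeling.
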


\begin{proof}
To prove (\ref{LABDRC1}), we take the time derivative of both sides of 
\begin{align}
   \big< \Rc_{G}, A \big> =\frac{1}{4} \big< \delta(A), [,] \big>,
\end{align}
which holds true by Lemma \ref{LEMRCDEL1}. To simplify the notation, we set $\hat{\frac{dG}{dt}}:=G^{-1}\frac{dG}{dt}$ and $\hat{S}:= G^{-1}S$.

First consider 
\begin{align}
\nonumber \frac{d}{dt} \big< \Rc_{G}, A  \big>
 &= \big< \frac{d }{dt}\Rc_{G} , A \big>  
  +\frac{dG}{dt} \big(\Rc_{G} \cdot, A \cdot \big)
-G( \Rc_{G} \cdot_{1}, A\cdot_{2} ) \frac{dG}{dt}(\cdot_{1}, \cdot_{2}) 
\\  \nonumber & = \big< \frac{d }{dt}\Rc_{G} , A \big> 
 + \big< \Rc_{G}, [\hat{\frac{dG}{dt}},A] \big>
\\  \label{LABRCA}& = \big< \frac{d }{dt}\Rc_{G} , A \big> 
 + \frac{1}{4}\big< \delta \big( [\hat{\frac{dG}{dt}},A] \big), [,] \big>,   
\end{align}
where we used Lemma \ref{LEMRCDEL1}.

Next, we have
\begin{align}
\nonumber \frac{d}{dt} \big{<}\delta(A), [,] \big{>}
 & = \frac{dG}{dt} \big{(} \delta(A)(\cdot_{1},\cdot_{2}), [\cdot_{1},\cdot_{2}]\big{)}
 -2G \big{(} \delta(A)(\cdot_{1}, \cdot_{3}), [\cdot_{2},\cdot_{3}] \big{)} \frac{dG}{dt}(\cdot_{1},\cdot_{2})
 \\  \label{LABDELB} &= \big{<}\delta(A), \delta(\hat{\frac{dG}{dt}}) \big{>}.
\end{align}
Equating (\ref{LABRCA}) and (\ref{LABDELB}) with a factor of $\frac{1}{4}$, gives 
\begin{align}
\label{LABDRC2}\big< \frac{d}{dt}\Rc_{G} , A \big> =
 \frac{1}{4}\big{<}\delta(\hat{\frac{dG}{dt}}), \delta(A)  \big{>}
+ \frac{1}{4}\big< \delta \big( [A,\hat{\frac{dG}{dt}}] \big), [,] \big>.
\end{align}
Applying Lemma \ref{LEMPSI} proves (\ref{LABDRC1}).

To prove (\ref{LABDRIC1}), consider 
\begin{align*}
\big< \frac{d}{dt} \Ric_{G}, S \big>
=\big< \frac{d}{dt} \Rc_{G} + \hat{\frac{dG}{dt}}  \Rc_{G}, \hat{S} \big>.
\end{align*}

By (\ref{LABDRC2}), this equals
\begin{align*}
&\frac{1}{4}\big{<} \delta(\hat{\frac{dG}{dt}}), \delta(\hat{S}) \big{>}
+ \big< \Rc_{G}, [\hat{S}, \hat{\frac{dG}{dt}}] \big>
+ \big< \hat{\frac{dG}{dt}} \Rc_{G}, \hat{S} \big>
\\& =\frac{1}{4}\big{<}\delta(\hat{\frac{dG}{dt}}), \delta(\hat{S}) \big{>}
+ \big< \Rc_{G}, \hat{S} \hat{\frac{dG}{dt}} \big>
\\& =\frac{1}{4}\big{<} \delta(\hat{\frac{dG}{dt}}), \delta(\hat{S}) \big{>}
+ \big< \Ric_{G}(*,*), \frac{dG}{dt}(*,\cdot)S(\cdot, *) \big>. 
\end{align*}

To prove (\ref{EQDRIC3}), we have
\begin{align*}
\frac{d}{dt} |\Ric_{G}|^{2}
&= 2 \big<\frac{d}{dt} \Ric_{G}, \Ric_{G} \big>
-2 \big< \Ric_{G}(*,*), \frac{d G}{dt}(*,\cdot) \Ric_{G}(\cdot, *) \big>
 \\ & = \frac{1}{2}\big{<} \delta(\hat{\frac{dG}{dt}}), \delta(\Rc_{G}) \big{>},
\end{align*}
where we used (\ref{LABDRIC1}).

Lastly, to prove (\ref{EQDR}), consider
\begin{align*}
\frac{d}{dt}\R_{G} 
&=-\frac{1}{4}\frac{d}{dt}|[,]|^{2}
\\ &= -\frac{1}{4}\frac{dG}{dt}\big([\cdot_{1},\cdot_{2}], [\cdot_{1},\cdot_{2}] \big)
 +\frac{1}{2}G([\cdot_{1},\cdot_{3}],[\cdot_{2},\cdot_{3}]) \frac{dG}{dt}(\cdot_{1},\cdot_{2})
\\& =-\big<\Ric_{G}, \frac{dG}{dt} \big>.
\end{align*}
\end{proof}

\section{Ricci Flow on Nilpotent Lie Groups: Solitons and the $\mathcal{W}_{+}$-Functional}

In Section \ref{SECWLIE1}, we introduce a functional, $\WW_{+}$, which is monotone along invariant Ricci flows on nilpotent Lie groups. We apply this functional in Section \ref{SECBLOWGROUPS} to prove that a Ricci flow blowdown limit on a nilpotent Lie group is locally an expanding nilsoliton, a type of expanding Ricci soliton. 

This  blowdown limit result has been derived in the literature using other methods. In \cite{LOTT1}, Lott proved that blowdown limits of three and four dimensional homogeneous    Ricci flows    are expanding solitons. The same statement for general homogeneous  flows  was proved in \cite{BOHM}  using the bracket flow (see also references therein).

We first present some background  on nilsolitons (see, for example, \cite{LAURET}).

\subsection{Nilsolitons} \label{SECNILSOL3}

Recalling Notation \ref{RMKDEL} and Remark \ref{RMKRIC1}, we have 
\begin{defn}
A  nilsoliton metric on a nilpotent Lie group $\mathcal{G}$ is an invariant metric $G$ that satisfies
\begin{align}\label{DEFNILSOL}
 \delta(\Rc_{G} + \frac{c\mathbb{1}}{2})=0,
\end{align}
for $c \in \mathbb{R}$. In the case when $c>0$, we call $G$ an expanding nilsoliton metric on $\mathcal{G}$.
\end{defn}

\begin{rmk}
If $G$ satisfies (\ref{DEFNILSOL}) and $\mathcal{G}$ is nonabelian then, by Lemma \ref{LEMRCDEL1}, $c=-\frac{2|\Ric_{G}|^{2}}{\R_{G}}> 0.$
\end{rmk}
The relation between nilsolitons and Ricci solitons is given in the following lemma.
\begin{lemma} \label{LEMSOL2}Let $\mathcal{G}=(\mathbb{R}^{n}, \cdot)$ be a simply connected nilpotent Lie group with Lie algebra $\mathfrak{g}=(\mathbb{R}^{n}, [,])$ and let $G$ be an expanding nilsoliton metric on  $\mathcal{G}$ such that  (\ref{DEFNILSOL}) holds true for $c>0$. Setting $B:=-\big(\Rc_{G}|_{0} + \frac{c\mathbb{1}}{2} \big)$, we have the following:

1) $G$ is an expanding Ricci soliton metric that satisfies
\begin{align}
\Ric_{G} +\frac{c \hspace{.4mm} G}{2} + \frac{\mathcal{L}_{V}G}{2}=0,
\end{align} 
where $V$ is generated by $\phi_{s}= exp(s B )$, $s\in \mathbb{R}$.

2) For $t>0$, $G(t)=c \hspace{.1mm}t\psi_{t}^{*}G$ is an invariant, expanding Ricci soliton that satisfies
\begin{align}
\Ric_{G(t)} +\frac{G(t)}{2t} + \frac{\mathcal{L}_{V(t)}G(t)}{2}=0,
\end{align} 
where $\psi_{t}:= exp\big( \frac{ln(ct)}{c} B \big)$ generates the vector field $V(t)=\frac{V}{ct}$ with $\psi_{\frac{1}{c}}=\mathbb{1}$.
\end{lemma}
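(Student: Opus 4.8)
The plan is to reduce everything to the single algebraic fact that $B$ is a derivation of $\mathfrak{g}$ and then to integrate it to honest automorphisms of $\mathcal{G}$.

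First I would observe that, since $\delta$ is linear in its argument, the nilsoliton condition $\delta(\Rc_{G}+\frac{c\mathbb{1}}{2})=0$ is exactly $\delta(B)=0$; by Notation \ref{RMKDEL} this says $B$ is a derivation of $\mathfrak{g}$. As $\mathcal{G}$ is simply connected, $B$ integrates to a one-parameter group of Lie group automorphisms whose differential at the identity is $\exp(sB)$; in the exponential coordinates identifying $\mathcal{G}=(\mathbb{R}^{n},\cdot)$ with $\mathfrak{g}$, each automorphism acts as the linear map $\exp(sB)$, which is the meaning of $\phi_{s}=\exp(sB)$ in the statement. I would also record that $B$ is self-adjoint with respect to $G$: since $\Ric_{G}$ is a symmetric bilinear form, $\Rc_{G}=G^{-1}\Ric_{G}$ is $G$-self-adjoint, and $\mathbb{1}$ trivially is, so $B^{*}=B$.

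For Part 1, I would compute the Lie derivative algebraically. Because each $\phi_{s}$ is an automorphism fixing the identity with differential $\exp(sB)$, the tensor $\phi_{s}^{*}G$ is again invariant and equals $G(\exp(sB)\,\cdot\,,\exp(sB)\,\cdot\,)$ at the Lie algebra level; differentiating at $s=0$ gives
\begin{align*}
\mathcal{L}_{V}G(X,Y)=G(BX,Y)+G(X,BY)=2\,G(BX,Y),
\end{align*}
using $B^{*}=B$ in the last step. Substituting $\Rc_{G}=-B-\frac{c}{2}\mathbb{1}$ into $\Ric_{G}(X,Y)=G(\Rc_{G}X,Y)$ yields $(\Ric_{G}+\frac{c}{2}G)(X,Y)=-G(BX,Y)=-\tfrac{1}{2}\mathcal{L}_{V}G(X,Y)$, which is precisely the claimed expanding soliton equation.

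For Part 2, I would invoke the general self-similar construction recalled in Section \ref{SECSOL1} (equations (\ref{EQRS1})--(\ref{EQSSRF1})): once $G$ is an expanding soliton with vector field $V$, it only remains to check that $\psi_{t}=\exp(\frac{\ln(ct)}{c}B)$ generates $V(t)=\frac{V}{ct}$ with $\psi_{1/c}=\mathbb{1}$. The normalization is immediate from $\ln 1=0$. For the generation, differentiating $\psi_{t}(x)=\exp(\frac{\ln(ct)}{c}B)x$ in $t$ gives $\frac{1}{ct}B\,\psi_{t}(x)$, and since the flow of $V$ is linear we have $V(y)=By$, so this equals $V(t)(\psi_{t}(x))$. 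Finally $G(t)=ct\,\psi_{t}^{*}G$ is invariant because $\psi_{t}$ is an automorphism and scaling preserves invariance, so the general construction delivers the stated equation for $G(t)$.

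The step I expect to be the main obstacle is making the Lie-derivative computation rigorous: one must justify that the derivation $B$ genuinely integrates to Lie group automorphisms of the simply connected $\mathcal{G}$ and that the flow of $V$ is the linear action $\exp(sB)$ in exponential coordinates, so that $\mathcal{L}_{V}G$ is itself invariant and can be evaluated at the identity by the purely algebraic formula above. Everything after that is routine substitution.
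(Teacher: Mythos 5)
Your proposal is correct and follows essentially the same route as the paper: identify $B$ as a derivation via the nilsoliton condition, use $\Aut\mathfrak{g}=\Aut\mathcal{G}$ for the simply connected nilpotent group so that $\exp(sB)$ generates the linear field $V|_{x}=Bx$, compute $\mathcal{L}_{V}G(x,y)=G(Bx,y)+G(x,By)$ at the identity using invariance, and deduce Part 2 from the general self-similar construction of Section \ref{SECSOL1} after checking that $\psi_{t}=\exp\big(\tfrac{\ln(ct)}{c}B\big)$ generates $V(t)=\tfrac{V}{ct}$ with $\psi_{\frac{1}{c}}=\mathbb{1}$. The only differences are that you make explicit a few points the paper leaves implicit (the $G$-self-adjointness of $B$ and the differentiation verifying the flow property), which is fine.
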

\begin{proof}
Since $B=-\Rc_{G}|_{0}- \frac{c\mathbb{1}}{2}$  is a derivation of $\mathfrak{g}$,  
for $s\in \mathbb{R}$ $exp(sB) \in \Aut \mathfrak{g}=\Aut \mathcal{G}$ and generates the vector field $V|_{x}=Bx$ on $\mathcal{G}= \mathbb{R}^{n}$. One may show that $\mathcal{L}_{V}G$ is invariant and at $0 \in \mathcal{G}$,
\begin{align*}
\mathcal{L}_{V}G(x,y)&= G(Bx,y)+G(x,By)
                                \\&  =  -2\Ric_{G}(x,y)- c \hspace{.3mm}G(x,y).
\end{align*} 
 Hence
\begin{align*}
\Ric_{G} +\frac{c \hspace{.3mm}G}{2} + \frac{\mathcal{L}_{V}G}{2}=0
\end{align*}
on $\mathcal{G}$. 

Part 2) of the lemma follows from Section \ref{SECSOL1} and that $V(t)=\frac{V}{ct}$ is generated by $\psi_{t}=exp \big( \frac{ln(ct)}{c}B \big)$ with $\psi_{\frac{1}{c}}= \mathbb{1}$.
\end{proof}
\subsubsection{$Nil^{3}$ Case}

As an example, we show that any invariant metric on the  three dimensional Heisenberg group $\mathcal{G}=Nil^{3}$ is an expanding nilsoliton.  Defining $Z(\mathfrak{g})$ to be the center of $\mathfrak{g}$, we will first characterize the derivations of the Lie algebra of $Nil^{3}$. 

\begin{lemma} \label{LEMDERNIL}Let $\mathcal{G}$ be $Nil^{3}$ and $A \in \End \mathfrak{g}.$ $\delta(A)=0$ is equivalent to the following two conditions: 
\begin{itemize}
\item [1)] $A: Z(\mathfrak{g}) \rightarrow Z(\mathfrak{g})$
\item [2)] $\tr A=2\tr A|_{Z(\mathfrak{g})}$.
\end{itemize}
\end{lemma}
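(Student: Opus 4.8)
The plan is to reduce the tensor equation $\delta(A)=0$ to a few linear equations on the matrix entries of $A$ in an adapted basis, and then recognize those equations as the coordinate-free conditions 1) and 2). Recall from Notation \ref{RMKDEL} that $\delta(A)=0$ says exactly that $A$ is a derivation of $\mathfrak{g}$, so the lemma is just the classical description of the derivations of the three dimensional Heisenberg algebra.

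First I would fix a basis $\{\eta_1,\eta_2,\eta_3\}$ of $\mathfrak{g}$ with $\eta_3$ spanning $Z(\mathfrak{g})$. Since $\mathcal{G}=Nil^{3}$ is nonabelian, $[\eta_1,\eta_2]$ spans the center, so I can normalize $\eta_1,\eta_2$ so that $[\eta_1,\eta_2]=\eta_3$, with all remaining brackets vanishing. Writing $A\eta_j=\sum_i A_{ij}\eta_i$ and using that $\delta(A)\in\wedge^{2}\mathfrak{g}^{*}\otimes\mathfrak{g}$ is antisymmetric, it suffices to evaluate $\delta(A)$ on the three pairs $(\eta_1,\eta_2)$, $(\eta_1,\eta_3)$, $(\eta_2,\eta_3)$.

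The computation is short because every bracket involving $\eta_3$ vanishes. Using Definition \ref{DEFDEL3}, one finds $\delta(A)(\eta_1,\eta_3)=-A_{23}\,\eta_3$, $\delta(A)(\eta_2,\eta_3)=A_{13}\,\eta_3$, and $\delta(A)(\eta_1,\eta_2)=A_{13}\,\eta_1+A_{23}\,\eta_2+(A_{33}-A_{11}-A_{22})\,\eta_3$. Hence $\delta(A)=0$ is equivalent to the three scalar conditions $A_{13}=0$, $A_{23}=0$, and $A_{33}=A_{11}+A_{22}$.

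Finally I would translate these back into invariant language. The vanishing $A_{13}=A_{23}=0$ says precisely that $A\eta_3\in\mathrm{span}(\eta_3)=Z(\mathfrak{g})$, i.e.\ condition 1). Granted condition 1), the restriction $A|_{Z(\mathfrak{g})}$ is well defined with $\tr A|_{Z(\mathfrak{g})}=A_{33}$, while $\tr A=A_{11}+A_{22}+A_{33}$; so the remaining equation $A_{33}=A_{11}+A_{22}$ is exactly $\tr A=2\tr A|_{Z(\mathfrak{g})}$, i.e.\ condition 2). The one point requiring care — and the thing I would state explicitly — is that condition 2) is only meaningful once condition 1) holds, so the equivalence must be read as ``$\delta(A)=0$ iff both 1) and 2)'' with 1) guaranteeing that the restriction appearing in 2) makes sense; there is no serious obstacle beyond organizing this logical dependence, since everything else is a direct and basis-independent consequence of the three component equations above.
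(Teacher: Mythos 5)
Your proof is correct and follows essentially the same route as the paper's: choose a basis with $\eta_{3}=[\eta_{1},\eta_{2}]$ spanning $Z(\mathfrak{g})$, compute the components of $\delta(A)$ (your pairs $(\eta_{1},\eta_{3})$, $(\eta_{2},\eta_{3})$ are just the paper's $\delta(A)(\eta_{3},\eta_{j})$ up to antisymmetry), and read off $A_{13}=A_{23}=0$, $A_{33}=A_{11}+A_{22}$ as conditions 1) and 2). Your explicit remark that condition 2) only has invariant meaning once condition 1) guarantees $A$ preserves $Z(\mathfrak{g})$ is a small but legitimate clarification the paper leaves implicit.
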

\begin{proof}
Let $\{\eta_{i}\}_{i=1,2,3}$ be a basis for $\mathfrak{g}$ such that $\eta_{3}=[\eta_{1},\eta_{2}]$ and  spans $Z(\mathfrak{g})$. Setting $A\eta_{i}=A_{ji}\eta_{j}$, we  consider the components of $\delta(A)$.

First, 
\begin{align}
\nonumber \delta(A)(\eta_{3}, \eta_{j})
&= A[\eta_{3}, \eta_{j}]- [A\eta_{3}, \eta_{j}] - [\eta_{3}, A\eta_{j}]
\\  & = -[A\eta_{3}, \eta_{j}]
  = -A_{i3}[\eta_{i},\eta_{j}].
\end{align}

Next,  we have
\begin{align}
\delta(A)(\eta_{1}, \eta_{2})
 \nonumber &= A[\eta_{1}, \eta_{2}]- [A\eta_{1}, \eta_{2}] - [\eta_{1}, A\eta_{2}]
\\& = A_{13}\eta_{1}+ A_{23}\eta_{2}+ \big( A_{33}-A_{11}-A_{22} \big) \eta_{3}.
\end{align}

These components are zero if and only if   
\begin{align*}
A_{13}&=0 \text{ and } A_{23}=0
\\ A_{33}&=A_{11}+ A_{22},
\end{align*}
which in turn are equivalent to 1) and 2) in the lemma.
\end{proof}

We then have
\begin{lemma} \label{LEMNILDEL}
Let $G$ be an invariant metric on $\mathcal{G}=Nil^{3}$.  The following holds true:
\begin{align}
\delta(\Rc_{G} -3\R_{G}\mathbb{1})=0.
 \end{align}
\end{lemma}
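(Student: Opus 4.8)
The plan is to reduce everything to the derivation criterion of Lemma \ref{LEMDERNIL} applied to the single endomorphism $A := \Rc_{G} - 3\R_{G}\mathbb{1}$, using the explicit block-diagonal form of $\Rc_{G}$ recorded in (\ref{EQRCNIL}). Since $Nil^{3}$ is nonabelian, I would work throughout with the orthogonal splitting $\mathfrak{g} = Z(\mathfrak{g})^{\perp} \oplus Z(\mathfrak{g})$, on which (\ref{EQRCNIL}) states that $\Rc_{G}$ acts as $\R_{G}\mathbb{1}$ on the two-dimensional complement $Z(\mathfrak{g})^{\perp}$ and as $-\R_{G}\mathbb{1}$ on the one-dimensional center $Z(\mathfrak{g})$.

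First I would observe that both $\Rc_{G}$ and $\mathbb{1}$ respect this splitting, so $A$ does as well; explicitly, $A = -2\R_{G}\mathbb{1}$ on $Z(\mathfrak{g})^{\perp}$ and $A = -4\R_{G}\mathbb{1}$ on $Z(\mathfrak{g})$. In particular $A$ maps $Z(\mathfrak{g})$ into $Z(\mathfrak{g})$, which is condition 1) of Lemma \ref{LEMDERNIL}. For condition 2) I would simply compare the two traces: since $\dim Z(\mathfrak{g})^{\perp} = 2$ and $\dim Z(\mathfrak{g}) = 1$,
\[
\tr A = 2\cdot(-2\R_{G}) + (-4\R_{G}) = -8\R_{G}, \qquad 2\,\tr A|_{Z(\mathfrak{g})} = 2\cdot(-4\R_{G}) = -8\R_{G},
\]
so $\tr A = 2\,\tr A|_{Z(\mathfrak{g})}$. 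Both conditions of Lemma \ref{LEMDERNIL} then hold, whence $\delta(A) = 0$, which is exactly the asserted identity.

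There is no serious obstacle here: once (\ref{EQRCNIL}) is in hand, the statement is a one-line bookkeeping check against Lemma \ref{LEMDERNIL}. The only point worth flagging is that the coefficient $3$ is not arbitrary — repeating the trace computation for $\Rc_{G} - c\R_{G}\mathbb{1}$ gives $\tr A = (1-3c)\R_{G}$ and $2\,\tr A|_{Z(\mathfrak{g})} = -2(1+c)\R_{G}$, and since $\R_{G} \neq 0$ for the nonabelian group $Nil^{3}$, equality forces $c = 3$. Thus the lemma is really recording that $3$ is the unique scalar making this combination a derivation, and the proof amounts to verifying that distinguished value.
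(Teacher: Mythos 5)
Your proof is correct and is essentially identical to the paper's: both verify that $A=\Rc_{G}-3\R_{G}\mathbb{1}$ satisfies the two conditions of Lemma \ref{LEMDERNIL} by using the block form (\ref{EQRCNIL}), arriving at the same trace identity $\tr A=-8\R_{G}=2\tr A|_{Z(\mathfrak{g})}$. Your closing observation that $c=3$ is the unique coefficient making $\Rc_{G}-c\R_{G}\mathbb{1}$ a derivation (since $\R_{G}\neq 0$ for the nonabelian $Nil^{3}$) is a nice extra remark not in the paper, but the proof itself follows the same route.
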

\begin{proof}We show that $A:=\Rc_{G} -3\R_{G} \mathbb{1}$ satisfies the conditions in Lemma \ref{LEMDERNIL}. Using (\ref{EQRCNIL}), $A: Z(\mathfrak{g}) \rightarrow Z(\mathfrak{g})$ and
\begin{align*}
\tr A= -8\R_{G}= 2 \tr A|_{Z(\mathfrak{g})}. 
\end{align*}

 \end{proof}

\subsection{$\mathcal{W}_{+}$-Functional for Nilpotent Lie Groups} \label{SECWLIE1}
In this section, we introduce the $\WW_{+}$-functional for Ricci flows on nilpotent Lie groups (Definition \ref{DEFWLIE}). We first motivate it  by analyzing Ricci flows on $Nil^{3}$. 

\subsubsection{Motivation}\label{SECMOT1}
 A first step in finding a scale invariant, monotone functional for $\mathcal{G}=Nil^{3}$ is to consider $tR_{G}$:
\begin{lemma}\label{PROPBRACK1} Given an invariant Ricci flow  $G(t)$ on $Nil^{3}$, $t \R_{G}$ is monotone.  It satisfies
\begin{align}
\frac{d}{dt} t\R_{G}= 6t\R_{G}\Big{(} R_{G}+ \frac{1}{6t} \Big{)} \leq 0. 
\end{align}
\end{lemma}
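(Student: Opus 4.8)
The plan is to differentiate $t\R_{G}$ directly and reduce everything to the scalar curvature, exploiting the special structure of $Nil^{3}$. Writing $\frac{d}{dt}(t\R_{G}) = \R_{G} + t\frac{d}{dt}\R_{G}$, I would first evaluate $\frac{d}{dt}\R_{G}$. Since $G(t)$ is a Ricci flow, $\frac{dG}{dt} = -2\Ric_{G}$, so formula (\ref{EQDR}) of Lemma \ref{LEMDRCA} gives $\frac{d}{dt}\R_{G} = -\langle \Ric_{G}, \frac{dG}{dt}\rangle = 2|\Ric_{G}|^{2}$. The crucial simplification is (\ref{EQRCNIL2}), valid on $Nil^{3}$, which states $|\Ric_{G}|^{2} = 3\R_{G}^{2}$; hence $\frac{d}{dt}\R_{G} = 6\R_{G}^{2}$. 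Substituting, $\frac{d}{dt}(t\R_{G}) = \R_{G} + 6t\R_{G}^{2} = 6t\R_{G}\big(\R_{G} + \frac{1}{6t}\big)$, which is exactly the stated identity. This part is a routine consequence of the earlier formulas.

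It remains to verify that this quantity is $\leq 0$. Since $t > 0$ and, by (\ref{EQRIC2}), $\R_{G} = -\frac{1}{4}|[,]|^{2} \leq 0$, the factor $6t\R_{G}$ is nonpositive, so it suffices to prove the lower bound $\R_{G} + \frac{1}{6t} \geq 0$, equivalently $\R_{G} \geq -\frac{1}{6t}$. This is the one genuinely non-formal step and the main obstacle: the sign of $\frac{d}{dt}(t\R_{G})$ cannot be read off without controlling $\R_{G}$ from below, and such control requires using the domain of the flow. To establish it I would integrate the scalar ODE $\frac{d}{dt}\R_{G} = 6\R_{G}^{2}$, whose solutions are $\R_{G}(t) = -\frac{1}{6(t-t_{*})}$ for a constant $t_{*}$. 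The constraint $\R_{G} \leq 0$ forces $t > t_{*}$, so the solution becomes singular at $t_{*}$ and lives on $(t_{*}, \infty)$.

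Because the flow is assumed to exist on all of $(0,\infty)$, the integration constant must satisfy $t_{*} \leq 0$. Then $t - t_{*} \geq t > 0$, so $\R_{G}(t) = -\frac{1}{6(t-t_{*})} \geq -\frac{1}{6t}$, which is the desired lower bound; substituting back gives $\frac{d}{dt}(t\R_{G}) = 6t\R_{G}\big(\R_{G} + \frac{1}{6t}\big) \leq 0$, so $t\R_{G}$ is nonincreasing. As a cross-check, one may differentiate the explicit solution to get $\frac{d}{dt}(t\R_{G}) = \frac{t_{*}}{6(t-t_{*})^{2}}$, manifestly $\leq 0$ once $t_{*} \leq 0$; the borderline case $t_{*}=0$, where $t\R_{G} \equiv -\frac{1}{6}$ is constant, is exactly the self-similar (nilsoliton) solution. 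Thus the whole content of the monotonicity lies in the fact that a solution defined on all of $(0,\infty)$ pins the integration constant to $t_{*} \leq 0$.
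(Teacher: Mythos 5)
Your proposal is correct, and the computation of the identity is exactly the paper's: apply (\ref{EQDR}) with $\frac{dG}{dt}=-2\Ric_{G}$ and then (\ref{EQRCNIL2}) to get $\frac{d}{dt}\R_{G}=2|\Ric_{G}|^{2}=6\R_{G}^{2}$, from which the displayed formula follows. The paper's proof stops there and treats the sign condition $\R_{G}+\frac{1}{6t}\geq 0$ as implicit (it is referred to afterwards simply as ``the relation''), whereas you correctly identify it as the genuinely nontrivial step and supply a complete justification: integrating the ODE gives $\R_{G}(t)=-\frac{1}{6(t-t_{*})}$, and existence of the flow on an interval whose left endpoint is $0$ forces $t_{*}\leq 0$, hence $\R_{G}\geq -\frac{1}{6t}$. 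This is the right argument, and it also makes visible the hypothesis the inequality truly depends on, namely the standing convention in this paper that the flows are defined for $t\in(0,\infty)$; for a flow existing only on, say, $(t_{0},\infty)$ with $t_{*}\in(0,t_{0})$, the inequality would fail. One microscopic point: dividing by $\R_{G}^{2}$ when separating variables uses $\R_{G}\neq 0$, which holds automatically since $Nil^{3}$ is nonabelian, so $\R_{G}=-\frac{1}{4}|[,]|^{2}<0$ for every invariant metric.
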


This follows from 
\begin{align}
\frac{d}{dt} \R_{G}= 2|\Ric_{G}|^{2}= 6\R_{G}^{2},
\end{align}
where we used (\ref{EQDR}) and (\ref{EQRCNIL2}).

Although $t\R_{G}$ is monotone for $Nil^{3}$, it is natural to find a functional whose monotonicity does not depend on the relation $R_{G}+ \frac{1}{6t} \geq 0$. For this, set $q(t)= t\R_{G}+ 3t^{2}R_{G}^{2}$.
\begin{lemma} Let $G(t)$ be an invariant Ricci flow  on $Nil^{3}$. We have
\begin{align*}
\frac{d}{dt}q(t) = 36t^{2}\R_{G}\big{(}\R_{G}+\frac{1}{6t} \big{)}^{2}.
\end{align*}
\end{lemma}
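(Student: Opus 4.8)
The plan is to prove this identity by a direct differentiation, since the only substantive analytic input is already in hand. The preceding lemma established that along an invariant Ricci flow on $Nil^{3}$,
\begin{align*}
\frac{d}{dt}\R_{G}= 2|\Ric_{G}|^{2}= 6\R_{G}^{2},
\end{align*}
where the first equality is (\ref{EQDR}) specialized to $\frac{dG}{dt}=-2\Ric_{G}$ and the second is the $Nil^{3}$-specific relation (\ref{EQRCNIL2}). Everything else is bookkeeping, so I would treat $\R_{G}$ simply as a scalar function of $t$ and compute.

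First I would differentiate $q(t)=t\R_{G}+3t^{2}\R_{G}^{2}$ term by term using the product rule, which yields
\begin{align*}
\frac{d}{dt}q(t)=\R_{G}+t\,\frac{d\R_{G}}{dt}+6t\R_{G}^{2}+6t^{2}\R_{G}\,\frac{d\R_{G}}{dt}.
\end{align*}
Substituting $\frac{d\R_{G}}{dt}=6\R_{G}^{2}$ and collecting like powers of $t$ then gives
\begin{align*}
\frac{d}{dt}q(t)=\R_{G}+12t\R_{G}^{2}+36t^{2}\R_{G}^{3}.
\end{align*}

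Finally I would verify that this agrees with the claimed right-hand side by expanding the square,
\begin{align*}
36t^{2}\R_{G}\Big(\R_{G}+\frac{1}{6t}\Big)^{2}=36t^{2}\R_{G}^{3}+12t\R_{G}^{2}+\R_{G},
\end{align*}
which matches exactly. There is no genuine obstacle here; the entire content sits in the input $\frac{d}{dt}\R_{G}=6\R_{G}^{2}$, whose derivation relies on the dimension-three Heisenberg identity $|\Ric_{G}|^{2}=3\R_{G}^{2}$. The only point demanding a little care is tracking the coefficient $3$ in the $3t^{2}\R_{G}^{2}$ term through the product rule, since it is precisely this factor that makes the cross term $2\cdot 36t^{2}\R_{G}\cdot\frac{\R_{G}}{6t}=12t\R_{G}^{2}$ in the expanded square line up with the $12t\R_{G}^{2}$ produced by the differentiation.
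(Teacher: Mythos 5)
Your proof is correct and is essentially the paper's own (implicit) argument: the paper states the lemma right after deriving $\frac{d}{dt}\R_{G}=2|\Ric_{G}|^{2}=6\R_{G}^{2}$ from (\ref{EQDR}) and (\ref{EQRCNIL2}), leaving precisely your product-rule computation and expansion of the square to the reader. All coefficients check out, so there is nothing to add.
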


Hence $q(t)$ gives a scale invariant functional that is monotone along  Ricci flows on $Nil^{3}$.

To generalize this result, we observe that using (\ref{EQRCNIL2}) for $Nil^{3}$,
\begin{align*}
 q(t)&=t\R_{G}+ 3t^{2}\R_{G}^{2}
        \\ &=t\R_{G}+ t^{2} |\Ric_{G}|^{2}.
\end{align*}

This, together with  Lauret's work on the bracket flow \cite{LAURET}, was our motivation for introducing the following functional.

\subsubsection{The $\mathcal{W}_{+}$-Functional}\label{SECWLIE2}
\begin{defn}\label{DEFWLIE}Let $\mathcal{G}$ be a nilpotent Lie group with a metric $G$ and let $\tau \in \mathbb{R}$. Define 
\begin{equation*}
\WW_{+}(G,\tau)= \tau \R_{G} +\tau^{2}|\Ric_{G}|^{2}.
\end{equation*}
\end{defn}
This functional is scale invariant: $\WW_{+}(c \hspace{.2mm}G,c\tau)=\WW_{+}(G,\tau),$ for $c>0$.
\begin{rmk}
In the above definition, $G$ is not necessarily invariant and may be defined only on an open subset of $\mathcal{G}$. In the case that $G$ is invariant, we will implicitly consider $\WW_{+}(G,\tau) \in \mathbb{R}$ by evaluating it at the identity element of $\mathcal{G}$.
\end{rmk}
We now analyze the $\mathcal{W}_{+}$-functional along a Ricci flow. We will use the $\delta$-map defined in (\ref{DEFDEL}) and Notation \ref{RMKDEL}.
\begin{prop}\label{PROPDW}
Let $(\mathcal{G},G(t))$ be an invariant Ricci flow on a nilpotent Lie group. The following holds true:
\begin{align}
\frac{d}{dt}\WW_{+}(G(t),t)&= -t^{2}|\delta(\Rc_{G}+ \frac{\mathbb{1}}{2t})|^{2}.
\end{align}
 
\end{prop}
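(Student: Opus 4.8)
The plan is to differentiate $\WW_{+}(G(t),t)=t\R_{G}+t^{2}|\Ric_{G}|^{2}$ term by term and then recognize the answer as a completed square. Along the Ricci flow we have $\frac{dG}{dt}=-2\Ric_{G}$, hence $G^{-1}\frac{dG}{dt}=-2\Rc_{G}$, and the first step is to feed this into the evolution identities of Lemma \ref{LEMDRCA}. Equation (\ref{EQDR}) gives $\frac{d}{dt}\R_{G}=-\big<\Ric_{G},\frac{dG}{dt}\big>=2|\Ric_{G}|^{2}$, while equation (\ref{EQDRIC3}) together with the linearity of $\delta$ gives $\frac{d}{dt}|\Ric_{G}|^{2}=\tfrac12\big<\delta(-2\Rc_{G}),\delta(\Rc_{G})\big>=-|\delta(\Rc_{G})|^{2}$.

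Applying the product rule and collecting the two separate $|\Ric_{G}|^{2}$ contributions (one from $t\,\frac{d}{dt}\R_{G}$ and one from differentiating the $t^{2}$ factor) yields
\begin{align*}
\frac{d}{dt}\WW_{+}(G(t),t)=\R_{G}+4t|\Ric_{G}|^{2}-t^{2}|\delta(\Rc_{G})|^{2}.
\end{align*}
The remaining task is purely algebraic: to show this equals $-t^{2}|\delta(\Rc_{G}+\tfrac{\mathbb{1}}{2t})|^{2}$. I would expand the latter using the bilinearity of the induced inner product into three pieces, namely $|\delta(\Rc_{G})|^{2}$, a cross term $\tfrac1t\big<\delta(\Rc_{G}),\delta(\mathbb{1})\big>$, and $\tfrac{1}{4t^{2}}|\delta(\mathbb{1})|^{2}$. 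The key observation is that $\delta(\mathbb{1})=-[,]$, which is immediate from Definition \ref{DEFDEL3}. Then Lemma \ref{LEMRCDEL1} applied with $A=\Rc_{G}$ (noting $|\Rc_{G}|^{2}=|\Ric_{G}|^{2}$ via the metric identification) gives $\big<\delta(\Rc_{G}),\delta(\mathbb{1})\big>=-4|\Ric_{G}|^{2}$, and the scalar-curvature formula (\ref{EQRIC2}) gives $|\delta(\mathbb{1})|^{2}=|[,]|^{2}=-4\R_{G}$. Substituting these three facts and multiplying by $-t^{2}$ produces exactly $\R_{G}+4t|\Ric_{G}|^{2}-t^{2}|\delta(\Rc_{G})|^{2}$, matching the displayed derivative and completing the proof.

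I expect the main obstacle to be nothing more conceptual than correctly assembling these auxiliary identities and tracking signs and factors of two; the differentiation is immediate once the Ricci flow equation is substituted into Lemma \ref{LEMDRCA}. The points most worth double-checking are that $\frac{d}{dt}\R_{G}=+2|\Ric_{G}|^{2}$ (the sign flip coming from $\frac{dG}{dt}=-2\Ric_{G}$), that the coefficient $4t|\Ric_{G}|^{2}$ is genuinely the sum $2t|\Ric_{G}|^{2}+2t|\Ric_{G}|^{2}$, and that the constant $\tfrac1{2t}$ in $\delta(\Rc_{G}+\tfrac{\mathbb{1}}{2t})$ is precisely what makes the cross and square terms reproduce the $4t|\Ric_{G}|^{2}$ and $\R_{G}$ summands. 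It is exactly this tuning of the coefficient that causes the square to complete, so verifying it is the crux.
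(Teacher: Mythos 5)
Your proposal is correct and follows essentially the same route as the paper: both differentiate $t\R_{G}+t^{2}|\Ric_{G}|^{2}$ via Lemma \ref{LEMDRCA} with $\frac{dG}{dt}=-2\Ric_{G}$ to obtain $\R_{G}+4t|\Ric_{G}|^{2}-t^{2}|\delta(\Rc_{G})|^{2}$, and then complete the square using Lemma \ref{LEMRCDEL1} and $\R_{G}=-\frac{1}{4}|[,]|^{2}$. The only cosmetic difference is that you expand $-t^{2}|\delta(\Rc_{G}+\frac{\mathbb{1}}{2t})|^{2}$ and match it to the derivative (making $\delta(\mathbb{1})=-[,]$ explicit), whereas the paper rewrites the derivative into the squared form directly, leaving that identity implicit.
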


\begin{proof}
By setting $\frac{dG}{dt}=-2\Ric_{G}$ in (\ref{EQDR}) and (\ref{EQDRIC3}), we obtain  
\begin{align}
\frac{d}{dt}\R_{G} &=2|\Ric_{G}|^{2}
\\ \label{EQDRIC4} \frac{d}{dt}|\Ric_{G}|^{2}
& =-|\delta(\Rc_{G})|^{2}.
\end{align}
We then have
\begin{align}
\nonumber \frac{d}{dt}(t\R_{G}+t^{2}|\Ric_{G}|^{2})
&= \R_{G} + t\frac{d}{dt}\R_{G}
   +2t|\Ric_{G}|^{2} +t^{2}\frac{d}{dt} |\Ric_{G}|^{2}
\\\nonumber &=\R_{G} +4t|\Ric_{G}|^{2} -t^{2}|\delta(\Rc_{G})|^{2}
\\ &=-t^{2} \Big( \frac{|[,]|^{2}}{4t^{2}}-\frac{1}{t} \big< \delta(\Rc_{G}), [,] \big> + |\delta(\Rc_{G})|^{2} \Big) 
 \\ \nonumber &=-t^{2}|\delta(\Rc_{G}+ \frac{\mathbb{1}}{2t})|^{2},
\end{align}
where we used Lemma \ref{LEMRCDEL1} to obtain the second term in the third equality. 
\end{proof}
\begin{rmk}
In \cite{LAURET}, Lauret derived (\ref{EQDRIC4}) using the bracket flow.
\end{rmk} 

\subsection{Blowdown Limit Results} \label{SECBLOWGROUPS}
Using Proposition \ref{PROPDW}, we obtain the following results about blowdown limits on nilpotent Lie groups.

\begin{prop}\label{PROPBLOWLIE3}
Let $(\mathcal{G},G(t))$ be an invariant Ricci flow solution on a nilpotent Lie group, for $t\in (0,\infty)$. Let $\{s_{i}\}$ be a sequence of positive numbers such that $\lim s_{i}= \infty$ and define $G_{i}(t)=s_{i}^{-1}G(s_{i}t)$. Suppose that $(\mathcal{G},a_{i},G_{i}(t))$ pointed Cheeger-Gromov converges to an invariant Ricci flow on a nilpotent Lie group, $(\mathcal{G}_{\infty},a_{\infty},G_{\infty}(t))$, where $a_{i}\in \mathcal{G}$ and $a_{\infty}\in \mathcal{G}_{\infty}$.  The following holds true:

1) $\delta \big{(}\Rc_{G_\infty} +\frac{\mathbb{1}}{2t} \big{)}=0$.

\vspace{3mm}

2) $(\tilde{\mathcal{G}}_{\infty}, \tilde{G}_{\infty}(t))$, the lift of $G_{\infty}(t)$ to the universal cover of $\mathcal{G}_{\infty}$, is an expanding Ricci soliton that satisfies:
\begin{align*}
\Ric_{\tilde{G}_{\infty}} +\frac{\mathcal{L}_{V(t)}\tilde{G}_{\infty}}{2} +\frac{\tilde{G}_{\infty}}{2t}=0,
\end{align*}
for some vector field $V(t)$ on $\tilde{\mathcal{G}}_{\infty}$.

\vspace{3mm}

3) $|\Ric_{G_{\infty}}|^{2}=-\frac{\R_{G_{\infty}}}{2t}. $

\vspace{3mm}

In the case when $\mathcal{G}_{\infty}=Nil^{3}$, $\R_{G_{\infty}}=-\frac{1}{6t}$.

\end{prop}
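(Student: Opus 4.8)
The plan is to extract Part 1) from the monotonicity of $\WW_+$ established in Proposition \ref{PROPDW}, combined with the scale invariance of $\WW_+$ and the smooth convergence of curvature under Cheeger--Gromov convergence; the statements in Parts 2), 3) and the $Nil^3$ identity then follow formally from Part 1) together with Lemmas \ref{LEMSOL2} and \ref{LEMRCDEL1}.

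First I would note that $G_i(t) = s_i^{-1}G(s_i t)$ is again a Ricci flow, and that scale invariance, $\WW_+(cG, c\tau) = \WW_+(G,\tau)$, gives $\WW_+(G_i(t), t) = \WW_+(G(s_i t), s_i t)$ (take $c = s_i^{-1}$ and $\tau = s_i t$, so that $cG = G_i(t)$ and $c\tau = t$). Thus, for each fixed $t$, the number $\WW_+(G_i(t),t)$ is exactly the original monotone quantity $\tau \mapsto \WW_+(G(\tau),\tau)$ evaluated at $\tau = s_i t \to \infty$. By Proposition \ref{PROPDW} this quantity is non-increasing in $\tau$. Since the invariant curvature quantities $R_{G_i}$ and $\Ric_{G_i}$ at the base point converge to those of $G_\infty$ under smooth Cheeger--Gromov convergence, we have $\WW_+(G_i(t),t) \to \WW_+(G_\infty(t),t)$, a finite number; a monotone function with a convergent subsequence as $\tau\to\infty$ converges, so the limit $L = \lim_{\tau\to\infty}\WW_+(G(\tau),\tau)$ exists and is finite.

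This is the crux: $L$ does not depend on $t$, yet $L = \WW_+(G_\infty(t),t)$ for every $t$, so $\WW_+(G_\infty(t),t)$ is constant in $t$. Because $G_\infty(t)$ is itself an invariant Ricci flow on a nilpotent Lie group, Proposition \ref{PROPDW} applies to it and gives $0 = \frac{d}{dt}\WW_+(G_\infty(t),t) = -t^2 |\delta(\Rc_{G_\infty} + \frac{\mathbb{1}}{2t})|^2$, whence $\delta(\Rc_{G_\infty} + \frac{\mathbb{1}}{2t}) = 0$, which is Part 1). I expect this identification of the limit functional as a constant to be the main (if modest) obstacle, since it is exactly where monotonicity, scale invariance, and smoothness of the convergence must be combined; the remaining parts are purely algebraic.

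For Part 2), the identity $\delta(\Rc_{G_\infty} + \frac{\mathbb{1}}{2t}) = 0$ says that at each time $G_\infty(t)$ is an expanding nilsoliton with $c = 1/t > 0$. Lifting to the universal cover $\tilde{\mathcal{G}}_\infty \cong (\mathbb{R}^n, \cdot)$, where the derivation $B = -(\Rc_{\tilde G_\infty}|_0 + \frac{\mathbb{1}}{2t})$ exponentiates to a one-parameter group of automorphisms, I would apply Lemma \ref{LEMSOL2} at each $t$ to produce the vector field $V(t)$ and the soliton equation $\Ric_{\tilde G_\infty} + \frac{\mathcal{L}_{V(t)}\tilde G_\infty}{2} + \frac{\tilde G_\infty}{2t} = 0$; together with $\partial_t \tilde G_\infty = -2\Ric_{\tilde G_\infty}$ this exhibits $\tilde G_\infty(t)$ as a self-similar solution. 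For Part 3), I would pair $\delta(\Rc_{G_\infty} + \frac{\mathbb{1}}{2t})$ with the bracket and invoke Lemma \ref{LEMRCDEL1} with $A = \Rc_{G_\infty} + \frac{\mathbb{1}}{2t}$: the left side becomes $\langle \Rc_{G_\infty}, \Rc_{G_\infty} + \frac{\mathbb{1}}{2t}\rangle = |\Ric_{G_\infty}|^2 + \frac{1}{2t}R_{G_\infty}$, while the right side is $\frac14 \langle \delta(\Rc_{G_\infty} + \frac{\mathbb{1}}{2t}), [,]\rangle = 0$, giving $|\Ric_{G_\infty}|^2 = -\frac{R_{G_\infty}}{2t}$. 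Finally, in the $Nil^3$ case I would substitute $|\Ric_{G_\infty}|^2 = 3R_{G_\infty}^2$ from (\ref{EQRCNIL2}) to get $3R_{G_\infty}^2 = -\frac{R_{G_\infty}}{2t}$; since $\mathcal{G}_\infty = Nil^3$ is nonabelian, $R_{G_\infty} = -\frac14|[,]|^2 < 0$, so dividing by $R_{G_\infty}$ yields $R_{G_\infty} = -\frac{1}{6t}$.
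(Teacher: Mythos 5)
Your proposal is correct and follows essentially the same route as the paper: scale invariance plus the monotonicity of Proposition \ref{PROPDW} identify $\WW_{+}(G_{\infty}(t),t)$ with the ($t$-independent) limit $\lim_{\tau\to\infty}\WW_{+}(G(\tau),\tau)$, forcing $\delta\big(\Rc_{G_{\infty}}+\frac{\mathbb{1}}{2t}\big)=0$, after which Lemma \ref{LEMSOL2} gives Part 2), Lemma \ref{LEMRCDEL1} gives Part 3), and (\ref{EQRCNIL2}) handles the $Nil^{3}$ case. The only cosmetic difference is in Part 3), where you apply Lemma \ref{LEMRCDEL1} directly to $A=\Rc_{G_{\infty}}+\frac{\mathbb{1}}{2t}$ rather than to $\Rc_{G_{\infty}}$ and then substituting Part 1), an equivalent computation.
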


\begin{proof} Using Definition \ref{DEFCGCON}, choose

$\bullet$ an exhaustion $\{U_{i}\}$  of open subsets of $\mathcal{G}_{\infty}$

$\bullet$ for each $i$, a diffeomorphism $\phi_{i}: U_{i} \rightarrow  V_{i}$, where $V_{i}:=\phi_{i}(U_{i})$ is open in  \hspace*{6mm} $\mathcal{G}$ and  $\phi_{i}(a_{\infty})= a_{i}$  \\
such that 
$\phi_{i}^{*}G_{i}(t)$ converges to $G_{\infty}(t)$
 smoothly and uniformly on compact subsets of $\mathcal{G}_{\infty} \times (0,\infty)$.

To prove Part 1), we first show that $\WW_{+}(G_{\infty}(t),t)$ is constant in $t$. For $t>0$, we have 
\begin{align*}
\mathcal{W}_{+}(G_{\infty}(t),t)|_{a_{\infty}}
&=\lim_{i \rightarrow \infty} \mathcal{W}_{+}(\phi_{i}^{*}G_{i}(t),t)|_{a_{\infty}}
\\& = \lim_{i \rightarrow \infty}  \phi_{i}^{*}\big(\mathcal{W}_{+}(G_{i}(t), t) \big)|_{a_{\infty}}
\\ & =\lim_{i \rightarrow \infty} \mathcal{W}_{+}(G_{i}(t), t)|_{a_{i}}.
\end{align*}
Using the invariance of $G$ and letting  $e \in \mathcal{G}$ be the identity element, this equals
\begin{align*}
  \lim_{i \rightarrow \infty} \mathcal{W}_{+}(G_{i}(t), t)|_{e}
 &= \lim_{i \rightarrow \infty} \mathcal{W}_{+}(s_{i}^{-1}G(s_{i}t), t)|_{e}
\\& =\lim_{i \rightarrow \infty} \mathcal{W}_{+}(G(s_{i}t), s_{i}t)|_{e}
\\& = \lim_{u \rightarrow \infty} \mathcal{W}_{+}(G(u), u)|_{e}.
\end{align*}
	The existence of the last limit follows from the existence of the second to last limit and the fact that $\mathcal{W}_{+}(G(t), t)$ is a nonincreasing function of $t$, which holds true by Proposition \ref{PROPDW}.
Hence $\mathcal{W}_{+}(G_{\infty}(t),t)$ is constant in $t$. Part 1) then follows from Proposition \ref{PROPDW}. Using this result, Part 2) follows from Lemma \ref{LEMSOL2}.

To prove Part 3), using Part 1) and Lemma \ref{LEMRCDEL1}, we have 
\begin{align}
\nonumber |\Ric_{G_{\infty}}|^{2}
&= \frac{1}{4} \big< \delta(\Rc_{G_{\infty}}), [,] \big>
\\ \label{EQRCNIL1}& = \frac{1}{8t}|[,]|_{G_{\infty}}^{2}
=-\frac{\R_{G_{\infty}}}{2t}.
\end{align}

When $\mathcal{G}_{\infty}$ is $Nil^{3}$, by (\ref{EQRCNIL2}), $|\Ric_{G_{\infty}}|^{2}= 3\R^{2}_{G_{\infty}}$.  Combining this with (\ref{EQRCNIL1}), gives $\R_{G_{\infty}}=-\frac{1}{6t}$.
\end{proof}

When $\mathcal{G}_{\infty}$ is $Nil^{3}$, Parts 1) and 3) of the above proposition are equivalent.
\begin{lemma} \label{PROPDELNIL} Let $G(t)$ be a time dependent, invariant metric on $Nil^{3}$. The condition 
$\delta(\Rc_{G}+\frac{\mathbb{1}}{2t})=0$ holds true if and only if $\R_{G}=-\frac{1}{6t}$.
\end{lemma}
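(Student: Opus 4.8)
The plan is to exploit the linearity of $\delta$ together with the already-established computation of derivations in Lemma \ref{LEMNILDEL}, so that almost no new calculation is needed. The key observation is that $\delta(\mathbb{1}) = -[,]$: indeed, applying Definition \ref{DEFDEL3} to $A = \mathbb{1}$ gives $\delta(\mathbb{1})(\eta_1,\eta_2) = [\eta_1,\eta_2] - [\eta_1,\eta_2] - [\eta_1,\eta_2] = -[\eta_1,\eta_2]$. Hence for any scalar $c$ (which may depend on $t$ but is constant over $\mathfrak{g}$) we have $\delta(c\,\mathbb{1}) = -c\,[,]$.

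The main step is then to split the argument of $\delta$ into a piece lying in the kernel of $\delta$ plus a scalar multiple of the identity. First I would write
\begin{align*}
\Rc_G + \frac{\mathbb{1}}{2t} = \big(\Rc_G - 3\R_G\,\mathbb{1}\big) + \Big(3\R_G + \frac{1}{2t}\Big)\mathbb{1}.
\end{align*}
Applying $\delta$ and using linearity, the first summand contributes nothing by Lemma \ref{LEMNILDEL}, while the second contributes $-\big(3\R_G + \tfrac{1}{2t}\big)[,]$ by the identity $\delta(\mathbb{1}) = -[,]$ noted above. This yields the clean formula
\begin{align*}
\delta\Big(\Rc_G + \frac{\mathbb{1}}{2t}\Big) = -\Big(3\R_G + \frac{1}{2t}\Big)[,].
\end{align*}

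To finish, I would invoke that $\mathcal{G} = Nil^3$ is nonabelian, so its Lie bracket $[,]$ is a nonzero element of $\wedge^2\mathfrak{g}^* \otimes \mathfrak{g}$; equivalently $\R_G = -\tfrac14|[,]|^2 \neq 0$ by (\ref{EQRIC2}). Consequently the right-hand side above vanishes if and only if the scalar factor $3\R_G + \tfrac{1}{2t}$ vanishes, i.e.\ if and only if $\R_G = -\tfrac{1}{6t}$, which is exactly the claimed equivalence. I do not anticipate a genuine obstacle here; the only subtlety is recognizing that Lemma \ref{LEMNILDEL} lets one replace $\Rc_G$ by the scalar $3\R_G\,\mathbb{1}$ modulo $\ker\delta$, after which the statement collapses to the nonvanishing of the bracket on a nonabelian nilpotent Lie algebra.
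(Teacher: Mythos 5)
Your proof is correct and takes essentially the same route as the paper: both arguments reduce the claim to Lemma \ref{LEMNILDEL} via linearity of $\delta$, use $\delta(\mathbb{1})=-[,]$ to handle the scalar part, and conclude from the nonvanishing of the bracket on the nonabelian group $Nil^{3}$. The paper's version is simply terser, leaving the decomposition $\Rc_{G}+\frac{\mathbb{1}}{2t}=(\Rc_{G}-3\R_{G}\mathbb{1})+(3\R_{G}+\frac{1}{2t})\mathbb{1}$ implicit.
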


\begin{proof} 
By Lemma \ref{LEMNILDEL}, $\delta(\Rc_{G}+\frac{\mathbb{1}}{2t})=0$  if and only if
\begin{align*}
-3\R_{G}[\hspace{.2mm}, \hspace{-.2mm}]= \frac{1}{2t}[\hspace{.2mm}, \hspace{-.2mm}],
\end{align*}
which is equivalent to $\R_{G}=-\frac{1}{6t}$.
\end{proof}

\begin{rmk}
One can verify that $\R_{G_{\infty}}=-\frac{1}{6t}$  in the blowdown limit example for $Nil^{3}$ given in \cite{LOTT1}. 
\end{rmk}

\section{Ricci Flow on Principal Bundles: Background} \label{SECRICCBAC}
In the upcoming sections, we will use functionals to study invariant Ricci flow blowdown limits  on nilpotent principal bundles. 
These functionals will be defined over the base manifold in terms of tensors that correspond to an invariant metric on the principal bundle. In this section, we describe this correspondence and derive a system of PDEs for the tensors that are equivalent to the Ricci flow equations on the bundle.  

\subsection{Invariant Metrics and Associated Data}  \label{SECRFP1}

Let $\pi:P \rightarrow M$ be a $\mathcal{G}$--principal bundle with an invariant Riemannian metric $\bar{g}$ and let $\mathcal{E}=  \mathfrak{G} \oplus TM$, where $\mathfrak{G}$ is the adjoint bundle. We will define certain differential and geometric structures on $\mathcal{E}$
by using a correspondence between sections of $\mathcal{E}$ and invariant vector fields on $P$. To describe it, consider the isomorphism $\lambda: TP \rightarrow \pi^{*}\mathcal{E}$ given by
\begin{align} 
\lambda(Z)=  \{p,\bar{A}Z \} + \pi_{*}Z ,
\end{align}
 where $Z \in T_{p}P$ and $\bar{A}$ is the connection on $P$ associated with $\bar{g}$. If $e \in \Gamma(\mathcal{E})$ then  $\tilde{e}:=\lambda^{-1}\pi^{*}e$ is an invariant section of $TP$, where $\pi^{*}e \in \Gamma(\pi^{*}\mathcal{E})$ is the pullback section of $e$. Moreover, if $Z$ is an invariant section of $TP$ then it is straightforward to show that $Z= \tilde{e}$ for some unique $e \in \Gamma(\mathcal{E})$.  This correspondence extends naturally to invariant sections of $(\otimes^{k}T^{*}P)\otimes (\otimes^{l}TP) $
and sections of $(\otimes^{k}\mathcal{E^{*}})\otimes (\otimes^{l}\mathcal{E})$. 


Using this correspondence, we obtain from $\bar{g}$ a fiberwise metric  $g_{_{\mathcal{E}}}:=G \oplus g$ on $\mathcal{E}= \mathfrak{G}\oplus TM$. We also obtain the following data which we will define below and in Section \ref{SECLAC} (see \cite{GS1} for more details): 

 
$\bullet$ a Lie algebroid bracket $[,]$ on $\Gamma(\mathcal{E})$

$\bullet$ a connection $D$ on $\mathcal{E}$

$\bullet$  an exterior derivative $d_{\mathcal{E}}: \Gamma(\wedge^{k}\mathcal{E^{*}})\rightarrow \Gamma(\wedge^{k+1}\mathcal{E^{*}})$

$\bullet$ a Lie algebroid connection $\nabla: \Gamma(\mathcal{E}) \rightarrow \Gamma(\mathcal{E}^{*}\otimes \mathcal{E})$.

\noindent In addition, we have the tensor $F:=F_{\bar{A}} \in \Gamma(\wedge^{2}T^{*}M\otimes \mathfrak{G})$ that corresponds to the curvature $\bar{F}:=d\bar{A}+\frac{1}{2}[\bar{A},\bar{A}]$.

The bracket on $\Gamma(\mathcal{E})$ is defined by 
\begin{defn}
For $e_{1}, e_{2} \in \Gamma(\mathcal{E})$, define $[e_{1},e_{2}]$ to be the unique section of $\mathcal{E}$ that satisfies \[ \widetilde{[e_{1},e_{2}]}= [\tilde{e_{1}},\tilde{e_{2}}]_{Lie},\] where $[,]_{Lie}$ is the Lie bracket on $\Gamma(TP)$. 
\end{defn}

If we define $ \tau: \mathcal{E}\rightarrow TM$ by $\tau(\eta + v)= v $, where $\eta \in \mathfrak{G}$,  we then have: 

\begin{lemma} \label{p:Liealgprop} Let $e_{1},e_{2} \in \Gamma(\mathcal{E})$ and $f\in C^{\infty}(M)$. The following holds true
\begin{itemize}
\item [a)] $[e_{1},fe_{2}]= f[e_{1},e_{2}] +\tau(e_{1})[f](e_{2})$
\item [b)] $\tau([e_{1},e_{2}]) = [\tau(e_{1}),\tau(e_{2})]_{Lie}$
\item [c)] $[,]$ on $\Gamma(\mathcal{E})$ satisfies the Jacobi identity. 
\end{itemize}
In other words, $\tau:(\mathcal{E}, [,]) \rightarrow (TM, [,]_{Lie})$ is a Lie algebroid structure on $\mathcal{E}$.
\end{lemma}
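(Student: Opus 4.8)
The plan is to exploit the defining property that the correspondence $e \mapsto \widetilde{e} = \lambda^{-1}\pi^{*}e$ is a linear isomorphism from $\Gamma(\mathcal{E})$ onto the invariant vector fields on $P$ which, by the very definition of the bracket, intertwines $[\,,\,]$ with the Lie bracket: $\widetilde{[e_{1},e_{2}]} = [\widetilde{e}_{1},\widetilde{e}_{2}]_{Lie}$. Since this map is injective, each identity for $[\,,\,]$ will be deduced from the corresponding (known) identity for $[\,,\,]_{Lie}$ on $P$, once I record two compatibility facts. First, $(i)$ the correspondence is $C^{\infty}(M)$-linear in the sense that $\widetilde{fe} = (\pi^{*}f)\,\widetilde{e}$ for $f\in C^{\infty}(M)$, which holds because $\lambda^{-1}$ is a bundle map and $\pi^{*}(fe)=(\pi^{*}f)\,\pi^{*}e$. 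Second, $(ii)$ $\widetilde{e}$ is $\pi$-related to $\tau(e)$; this is immediate from $\lambda(Z)=\{p,\bar{A}Z\}+\pi_{*}Z$, since applying $\tau$ to $\lambda(\widetilde{e})=\pi^{*}e$ discards the $\mathfrak{G}$-component and leaves $\pi_{*}\widetilde{e}_{p}=\tau(e)_{\pi(p)}$.

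For part b) I would invoke the standard fact that the Lie bracket of $\pi$-related vector fields is $\pi$-related to the bracket of their projections. By $(ii)$, each $\widetilde{e}_{i}$ is $\pi$-related to $\tau(e_{i})$, so $\widetilde{[e_{1},e_{2}]}=[\widetilde{e}_{1},\widetilde{e}_{2}]_{Lie}$ is $\pi$-related to $[\tau(e_{1}),\tau(e_{2})]_{Lie}$. Applying $(ii)$ directly to $[e_{1},e_{2}]$ shows $\widetilde{[e_{1},e_{2}]}$ is also $\pi$-related to $\tau([e_{1},e_{2}])$. Comparing the two projections forces $\tau([e_{1},e_{2}])=[\tau(e_{1}),\tau(e_{2})]_{Lie}$.

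For part a) I would compute directly, using $(i)$ and the Leibniz rule for $[\,,\,]_{Lie}$:
\begin{align*}
\widetilde{[e_{1},fe_{2}]} = [\widetilde{e}_{1},(\pi^{*}f)\widetilde{e}_{2}]_{Lie} = (\pi^{*}f)[\widetilde{e}_{1},\widetilde{e}_{2}]_{Lie} + \big(\widetilde{e}_{1}(\pi^{*}f)\big)\widetilde{e}_{2}.
\end{align*}
The derivative term simplifies through $(ii)$, since $\widetilde{e}_{1}(\pi^{*}f)=\pi^{*}\big(\tau(e_{1})[f]\big)$. Re-expressing the right-hand side through the correspondence via $(i)$ identifies it with $\widetilde{f[e_{1},e_{2}]}+\widetilde{\tau(e_{1})[f]\,e_{2}}$, and injectivity of $e\mapsto\widetilde{e}$ yields the claimed formula. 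For part c) the Jacobi identity transfers verbatim: the cyclic sum of the left-hand sides equals the image under $\sim$ of the corresponding cyclic sum of iterated $[\,,\,]_{Lie}$ brackets on $P$, which vanishes; injectivity then forces the cyclic sum in $\Gamma(\mathcal{E})$ to vanish as well.

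I do not expect a genuine obstacle here, as the argument is a formal transfer of the vector-field identities across the isomorphism $e\mapsto\widetilde{e}$. The only point requiring real care is the clean verification of $(i)$ and especially $(ii)$, i.e. that the anchor $\tau$ coincides with $\pi_{*}$ under the correspondence; once these are in place, parts a), b), and c) follow mechanically.
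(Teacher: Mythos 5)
Your proof is correct: facts $(i)$ and $(ii)$ are verified exactly as needed, and parts a), b), c) then follow by the standard transfer of the Lie-bracket identities across the injective correspondence $e \mapsto \tilde{e}$, with b) resting correctly on the naturality of $[\,,\,]_{Lie}$ under $\pi$-related vector fields. The paper itself states this lemma without proof (deferring to \cite{GS1}), and your argument is precisely the formal transfer-of-structure proof that the paper's definition of the bracket is designed to make work, so there is nothing to flag.
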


By Lemma \ref{p:Liealgprop}, we obtain the  connection $D$ on $\mathfrak{G}$ defined by
\begin{align} \label{f:Liealgconn}
D_{v}\eta:= [v,\eta].
\end{align}

\begin{nota} \label{NOTAD1}
Given the above Riemannian metric $g$ on $M$, we will at times extend $D$ to a connection on $\mathcal{E}=\mathfrak{G} \oplus TM$ by setting $D=D \oplus \nabla^{g}$, where $\nabla^{g}$ is the Levi-Civita connection of $g$. 
\end{nota} 

In the following lemma, we list some properties that the Lie algebroid bracket satisfies. For some notation, given $p \in P$ and $x \in \mathfrak{g}$, we define $\{p,x\}$ to be the equivalence class of $(p,x)$ in 
$\mathfrak{G}=P \times \mathfrak{g} / \hspace{-1mm}\sim $, where $(p,x) \sim (p \cdot a, Ad_{a^{-1}}x)$,  for  $a \in \mathcal{G}$.

\begin{lemma} \label{PROPBRA} Let $s:U \rightarrow P$ be a local section of $P$, $x,y \in \mathfrak{g}$ and $v,w \in \Gamma(TM)$. The following holds true
\begin{itemize}
\item[a)] $[,]$ restricts to a Lie bracket on each fiber of $\mathfrak{G}$
\item[b)] $[\{s,x\}, \{s,y\}]=-\{s,[x,y]\}$
\item[c)] $D_{v}\{s,x\} =\{s, [(s^{*}\bar{A})v, x]\}$
\item[d)] $[v,w]= [v,w]_{Lie} - F(v,w)$.
\end{itemize}
\end{lemma}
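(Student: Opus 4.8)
The plan is to establish all four statements by passing to a local trivialization of $P$ and reducing each to a Lie-bracket computation among invariant vector fields on the total space, using the defining relation $\widetilde{[e_{1},e_{2}]}=[\til{e_{1}},\til{e_{2}}]_{Lie}$. Fix the local section $s$, which trivializes $P|_{U}\cong U\times\mathcal{G}$ via $(m,a)\mapsto s(m)\cdot a$, and set $\theta:=s^{*}\bar{A}$. The first and most delicate step is to identify the lifts $\til{e}=\lambda^{-1}\pi^{*}e$ explicitly in this trivialization. For constant $x\in\mathfrak{g}$, the section $\{s,x\}$ is represented by the equivariant function $X(s(m)\cdot a)=Ad_{a^{-1}}x$, so its lift is the associated fundamental vector field, whose value at $(m,a)$ is $(L_{a})_{*}Ad_{a^{-1}}x=(R_{a})_{*}x$; hence $\widetilde{\{s,x\}}$ is the fiberwise right-invariant vector field $x^{R}$, constant along $U$. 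Similarly, imposing $\bar{A}(\til{v})=0$ and $\pi_{*}\til{v}=v$ identifies $\til{v}$ with the horizontal lift, which in the trivialization is $v^{H}=v-(\theta v)^{R}$.

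Granting this, parts a) and b) follow quickly. For a), part b) of Lemma \ref{p:Liealgprop} gives $\tau([\eta_{1},\eta_{2}])=[\tau(\eta_{1}),\tau(\eta_{2})]_{Lie}=0$ for sections $\eta_{i}$ of $\mathfrak{G}$, so $[\eta_{1},\eta_{2}]$ again lies in $\mathfrak{G}$; part a) of Lemma \ref{p:Liealgprop} with $\tau(\eta_{1})=0$ gives $[\eta_{1},f\eta_{2}]=f[\eta_{1},\eta_{2}]$, so the bracket is tensorial, hence fiberwise, and part c) of Lemma \ref{p:Liealgprop} together with antisymmetry makes it a Lie bracket on each fiber. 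For b), I would lift $\{s,x\}$ and $\{s,y\}$ to $x^{R}$ and $y^{R}$ and invoke $[x^{R},y^{R}]_{Lie}=-[x,y]^{R}$, the right-invariant fields forming the opposite Lie algebra; since $-[x,y]^{R}$ is the lift of $-\{s,[x,y]\}$, this yields $[\{s,x\},\{s,y\}]=-\{s,[x,y]\}$.

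For c), I would compute $[v,\{s,x\}]=[v^{H},x^{R}]_{Lie}$ with $v^{H}=v-(\theta v)^{R}$; the cross term $[v,x^{R}]_{Lie}$ vanishes since $v$ is tangent to $U$ with coefficients independent of the fiber while $x^{R}$ is vertical with coefficients independent of $U$, whereas $-[(\theta v)^{R},x^{R}]_{Lie}=[\theta(v),x]^{R}$ by the opposite-algebra rule. As $[\theta(v),x]^{R}$ is the lift of $\{s,[(s^{*}\bar{A})v,x]\}$, this gives c). For d), projecting yields $\pi_{*}[v^{H},w^{H}]_{Lie}=[v,w]_{Lie}$, so the $TM$-component of the bracket is $[v,w]_{Lie}$, while Cartan's structure equation applied to the horizontal fields gives $\bar{A}([v^{H},w^{H}]_{Lie})=-\bar{F}(v^{H},w^{H})$; since $F$ is the tensor on $M$ corresponding to $\bar{F}$, the $\mathfrak{G}$-component is $-F(v,w)$. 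Applying $\lambda$ and the defining relation for the bracket then produces $[v,w]=[v,w]_{Lie}-F(v,w)$.

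The main obstacle is the bookkeeping in the first step: correctly identifying $\widetilde{\{s,x\}}$ with the right-invariant field $x^{R}$ and tracking the resulting signs, since it is precisely the equivariance $Ad_{a^{-1}}x$ of the adjoint-bundle representative that converts the (left-invariant) fundamental vector fields into right-invariant ones and produces the minus signs in b) and c). Once the lifts and the convention $[x^{R},y^{R}]_{Lie}=-[x,y]^{R}$ are pinned down, the remaining computations are routine, with d) reducing to the standard structure equation relating horizontal brackets to curvature.
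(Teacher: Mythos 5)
Your proposal is correct. Note, however, that this paper never proves Lemma \ref{PROPBRA} itself: it is stated as background, with the data and its properties deferred to \cite{GS1}, so there is no in-paper argument to compare against; what can be checked is consistency with the paper's conventions, and your proof passes that test. Your identification of the lifts is exactly the one the paper uses implicitly later (in the proof of Lemma \ref{LEMSOL1} the lift of $\eta=\{p,x\}$ is defined by $\tilde{\eta}|_{p}=\frac{d}{dt}|_{t=0}\,p\cdot exp(tx)$, whose right-translates give precisely your fiberwise right-invariant field $x^{R}$), and your formula $\tilde{v}=v^{H}=v-(\theta v)^{R}$ follows correctly from $\bar{A}(\tilde{v})=0$, $\pi_{*}\tilde{v}=v$. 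The anti-homomorphism $[x^{R},y^{R}]_{Lie}=-[x,y]^{R}$ then yields the minus sign in b), and your part c) computation is sound provided one notes (as you implicitly do) that $x^{R}$ annihilates the coefficient functions of $(\theta v)^{R}$, which are pulled back from $U$; part d) is the standard consequence of the structure equation $\bar{A}([v^{H},w^{H}]_{Lie})=-\bar{F}(v^{H},w^{H})$. Your part a) is also legitimate as written, since it only invokes Lemma \ref{p:Liealgprop}, which precedes Lemma \ref{PROPBRA} in the paper; tensoriality in both slots (using antisymmetry) plus the inherited Jacobi identity indeed gives a Lie bracket on each fiber of $\mathfrak{G}$.
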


Moreover, associated with the Lie algebroid $\tau:(\mathcal{E}, [,]) \rightarrow (TM, [,]_{Lie})$ is the exterior derivative $d_{\mathcal{E}}$ which squares to zero:

\begin{defn} \label{DEFD1}
Define $d_{\mathcal{E}}: \Gamma(\wedge^{k}\mathcal{E^{*}})\rightarrow \Gamma(\wedge^{k+1}\mathcal{E^{*}})$ by 
\begin{align*}
d_{\mathcal{E}}\sigma(e_{1},...,e_{k+1})=& \sum_{1\leq i\leq k+1}(-1)^{i-1}\tau(e_{i})[\sigma(e_{1},...,\hat{e_{i}},...,e_{k+1})]   \\  
 & + \sum_{1\leq i<j \leq k+1} (-1)^{i+j}\sigma([e_{i},e_{j}],e_{1},...,\hat{e_{i}},...,\hat{e_{j}},...,e_{k+1}).
\end{align*}
\end{defn}

\begin{lemma}
For $\sigma \in \Gamma(\wedge^{k}\mathcal{E^{*}})$,
\[\widetilde{d_{\mathcal{E}}\sigma}= d\tilde{\sigma}. \] 
\end{lemma}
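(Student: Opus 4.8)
The plan is to reduce the identity to the classical Cartan formula for the ordinary exterior derivative on $P$, exploiting that the Lie algebroid differential $d_{\mathcal{E}}$ of Definition \ref{DEFD1} was written in exactly Cartan's form. Since $\widetilde{d_{\mathcal{E}}\sigma}$ and $d\tilde{\sigma}$ are both $\mathcal{G}$-invariant $(k+1)$-forms on $P$ (invariance of $d\tilde{\sigma}$ holds because $d$ commutes with pullback by the right $\mathcal{G}$-action and $\tilde{\sigma}$ is invariant), it suffices to check that they agree when evaluated on invariant vector fields $\tilde{e}_{1},\dots,\tilde{e}_{k+1}$ arising from sections $e_{1},\dots,e_{k+1}\in\Gamma(\mathcal{E})$ that locally frame $\mathcal{E}$; such $\tilde{e}_{i}$ frame $TP$ pointwise because $\lambda$ is an isomorphism.

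First I would record the three entries of the translation dictionary supplied by the construction of $\lambda$ and of the bracket. (i) For the pointwise pairing, $\tilde{\sigma}(\tilde{e}_{1},\dots,\tilde{e}_{k})=\pi^{*}\big(\sigma(e_{1},\dots,e_{k})\big)$, since by construction the invariant function $\tilde{\sigma}(\tilde{e}_{1},\dots,\tilde{e}_{k})$ on $P$ descends to $\sigma(e_{1},\dots,e_{k})$ on $M$. (ii) The vector field $\tilde{e}$ is $\pi$-related to the anchor $\tau(e)$: comparing the $TM$-component of $\lambda(\tilde{e})=\pi^{*}e$ against $\lambda(Z)=\{p,\bar{A}Z\}+\pi_{*}Z$ gives $\pi_{*}\tilde{e}=\tau(e)$, whence $\tilde{e}[\pi^{*}h]=\pi^{*}\big(\tau(e)[h]\big)$ for any $h\in C^{\infty}(M)$. (iii) By the very definition of the bracket on $\Gamma(\mathcal{E})$, $[\tilde{e}_{i},\tilde{e}_{j}]_{Lie}=\widetilde{[e_{i},e_{j}]}$.

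With this dictionary the verification is term-by-term. In the first sum of Cartan's formula for $d\tilde{\sigma}$, the term $\tilde{e}_{i}\big[\tilde{\sigma}(\tilde{e}_{1},\dots,\hat{\tilde{e}}_{i},\dots,\tilde{e}_{k+1})\big]$ equals, by (i) and (ii), $\pi^{*}\big(\tau(e_{i})[\sigma(e_{1},\dots,\hat{e}_{i},\dots,e_{k+1})]\big)$, matching the first sum of $d_{\mathcal{E}}\sigma$ after applying $\pi^{*}$. In the second sum, $\tilde{\sigma}([\tilde{e}_{i},\tilde{e}_{j}]_{Lie},\dots)$ equals $\pi^{*}\big(\sigma([e_{i},e_{j}],\dots)\big)$ by (iii) and (i). Summing, $d\tilde{\sigma}(\tilde{e}_{1},\dots,\tilde{e}_{k+1})=\pi^{*}\big(d_{\mathcal{E}}\sigma(e_{1},\dots,e_{k+1})\big)=\widetilde{d_{\mathcal{E}}\sigma}(\tilde{e}_{1},\dots,\tilde{e}_{k+1})$, so the two invariant forms coincide. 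The only point requiring real care—the main ``obstacle,'' though a mild one—is item (ii): one must confirm that $\tilde{e}$ projects under $\pi_{*}$ to a well-defined vector field on $M$ equal to the anchor $\tau(e)$, so that derivatives of pulled-back functions convert correctly. This is immediate from the formula for $\lambda$ together with the invariance of $\tilde{e}$, and everything else is formal bookkeeping once Definition \ref{DEFD1} is matched against Cartan's formula.
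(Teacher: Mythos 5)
Your proof is correct. The paper itself states this lemma without proof (deferring details of the correspondence to \cite{GS1}), so there is no in-paper argument to compare against; your verification --- reducing to invariant frames $\{\tilde{e}_{i}\}$, recording the dictionary $\pi_{*}\tilde{e}=\tau(e)$, $\tilde{\sigma}(\tilde{e}_{1},\dots,\tilde{e}_{k})=\pi^{*}\big(\sigma(e_{1},\dots,e_{k})\big)$, and $[\tilde{e}_{i},\tilde{e}_{j}]_{Lie}=\widetilde{[e_{i},e_{j}]}$, then matching Definition \ref{DEFD1} term-by-term with the global Cartan formula on $P$ --- is exactly the natural argument that the $\lambda$-correspondence is designed to make work, and all three dictionary items are correctly justified from the definitions in Section \ref{SECRFP1}.
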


Given the above data, we present two lemmas which we will use in the upcoming sections. 

For the first, let $Z(\mathfrak{G})=\cup_{m \in M} Z(\mathfrak{G}|_{m})$,  where $Z(\mathfrak{G}|_{m})$ is the center of the Lie algebra $(\mathfrak{G}|_{m}, [,])$, and let $Z(\mathfrak{g})$ be the center of the Lie algebra of $\mathcal{G}$. We then have (see \cite{GS3} for the proof).

\begin{lemma} \label{LEMDZ}
\mbox{}
\begin{itemize}
\item[1)] $D$ restricts to a connection on $Z(\mathfrak{G})$, which is a vector bundle over $M$.
\item[2)] If $x \in Z(\mathfrak{g})$ and $s$ is a local section of $P$ then 
     $\{s,x\}$ is a local section of $Z(\mathfrak{G})$ and $D\{s,x\}=0$.
\end{itemize}
\end{lemma}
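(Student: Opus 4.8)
The plan is to prove Part 2) first, since it contains the essential one-line computations, and then deduce Part 1) from it. Fix $x \in Z(\mathfrak{g})$ and a local section $s: U \to P$. To see that $\{s,x\}$ is a section of $Z(\mathfrak{G})$, I would invoke Lemma \ref{PROPBRA} b): for any $y \in \mathfrak{g}$ we have $[\{s,x\},\{s,y\}] = -\{s,[x,y]\} = 0$, since $x$ is central in $\mathfrak{g}$. Because the frame map $y \mapsto \{s(m),y\}$ is a linear isomorphism $\mathfrak{g} \to \mathfrak{G}|_m$ for each $m \in U$, every element of the fiber $\mathfrak{G}|_m$ has the form $\{s(m),y\}$, so $\{s(m),x\}$ commutes with all of $\mathfrak{G}|_m$; that is, $\{s(m),x\} \in Z(\mathfrak{G}|_m)$. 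For the flatness assertion I would apply Lemma \ref{PROPBRA} c): for $v \in \Gamma(TM)$ we get $D_v\{s,x\} = \{s,[(s^*\bar{A})v, x]\} = 0$, again because $x \in Z(\mathfrak{g})$ and $(s^*\bar{A})v \in \mathfrak{g}$. Hence $D\{s,x\}=0$.

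For Part 1), the first task is to establish that $Z(\mathfrak{G})$ is a smooth vector subbundle of $\mathfrak{G}$. The key observation is that the fiberwise isomorphism $y \mapsto \{s(m),y\}$ carries $Z(\mathfrak{g})$ bijectively onto $Z(\mathfrak{G}|_m)$ (the computation above run in both directions, using the injectivity of the frame map), so each center $Z(\mathfrak{G}|_m)$ has constant dimension $k := \dim Z(\mathfrak{g})$. Choosing a basis $x_1,\dots,x_k$ of $Z(\mathfrak{g})$, Part 2) shows that $\{s,x_1\},\dots,\{s,x_k\}$ are local sections of $Z(\mathfrak{G})$; they are pointwise linearly independent and span $Z(\mathfrak{G}|_m)$ at every $m \in U$, hence form a local frame. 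This exhibits $Z(\mathfrak{G})$ as a rank-$k$ subbundle of $\mathfrak{G}$.

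It then remains to check that $D$ preserves $\Gamma(Z(\mathfrak{G}))$. Writing a section $\eta \in \Gamma(Z(\mathfrak{G}))$ in the local frame as $\eta = \sum_i f_i \{s,x_i\}$ with $f_i \in C^{\infty}(U)$, and using that $D$ obeys the Leibniz rule $D_v(f\eta) = v[f]\,\eta + f\,D_v\eta$ (immediate from Lemma \ref{p:Liealgprop} a) with $\tau(v)=v$), I would compute $D_v\eta = \sum_i v[f_i]\{s,x_i\}$, the terms $f_i D_v\{s,x_i\}$ vanishing by Part 2). Since each $\{s,x_i\}$ lies in $\Gamma(Z(\mathfrak{G}))$, so does $D_v\eta$, and $D$ restricts to a connection on $Z(\mathfrak{G})$.

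The main obstacle will not be any single computation, as each follows in one line from Lemma \ref{PROPBRA}, but rather assembling the local-triviality argument in Part 1): one must verify that the fiberwise centers glue into a smooth subbundle of constant rank. This is exactly where the constancy of $\dim Z(\mathfrak{G}|_m)$ and the existence of the $D$-parallel local frame $\{s,x_i\}$ supplied by Part 2) are needed.
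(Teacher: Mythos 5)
Your proof is correct. Note that the paper does not actually supply a proof of Lemma \ref{LEMDZ} --- it defers to the reference \cite{GS3} --- so there is no internal argument to compare against; but your argument is complete and uses exactly the ingredients the paper makes available: Lemma \ref{PROPBRA} b) gives centrality of $\{s,x\}$ via the fiberwise frame isomorphism $y \mapsto \{s(m),y\}$, Lemma \ref{PROPBRA} c) gives $D\{s,x\}=0$, and the resulting $D$-parallel local frame of $Z(\mathfrak{G})$ (constant rank $\dim Z(\mathfrak{g})$, by running the centrality computation in both directions) yields both the subbundle structure and, via the Leibniz rule from Lemma \ref{p:Liealgprop} a), the fact that $D$ preserves $\Gamma(Z(\mathfrak{G}))$. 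This is surely the intended argument, and deducing Part 1) from Part 2) in this way is the natural order.
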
 

As for the second, we have \cite{GS1}
\begin{lemma} \label{propGRAD} Let $\bar{g}$ be an invariant metric on a nilpotent $\mathcal{G}$--principal bundle $\pi: P \rightarrow M$ and let $\{ x_{i}\}$ be a basis for $\mathfrak{g}$. 
\begin{enumerate}
\item [1)] Given $m \in M$, $\det G(\{p,x_{i} \}, \{p,x_{j} \})= \det G(\{p',x_{i} \}, \{p',x_{j} \})$, for all $p, p' \in \pi^{-1}(m)$.
\item[2)] $h|_{m}=\det G(\{p,x_{i} \}, \{p,x_{j} \})$, for $p \in \pi^{-1}(m)$, defines a global function $h$ on M. 
\item[3)] $DG(\cdot,\cdot)=d(\ln h)$.
\end{enumerate}
\end{lemma}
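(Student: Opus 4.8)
The plan is to reduce all three parts to a single consequence of nilpotency: for every $\xi \in \mathfrak{g}$ the map $\ad_{\xi}$ is nilpotent, so $\tr \ad_{\xi}=0$, and correspondingly $Ad_{\exp \xi}=\exp(\ad_{\xi})$ is unipotent with $\det Ad_{\exp \xi}=1$; since $\mathcal{G}$ is a connected nilpotent group this gives $\det Ad_{a}=1$ for all $a \in \mathcal{G}$. For Part 1), fix $m \in M$ and $p,p' \in \pi^{-1}(m)$, and write $p'=p\cdot a$ for the unique $a \in \mathcal{G}$ with this property. Using the equivalence relation $(p\cdot a, Ad_{a^{-1}}x)\sim(p,x)$ that defines $\mathfrak{G}$, I would first rewrite $\{p',x_{i}\}=\{p\cdot a, x_{i}\}=\{p, Ad_{a}x_{i}\}$. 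Expanding $Ad_{a}x_{i}$ in the basis $\{x_{j}\}$ with matrix $(Ad_{a})_{ji}$ and using bilinearity of $G$, the Gram matrix at $p'$ becomes $(Ad_{a})^{T}\,\big[G(\{p,x_{i}\},\{p,x_{j}\})\big]\,(Ad_{a})$, so taking determinants produces the factor $(\det Ad_{a})^{2}=1$. This is exactly Part 1). Part 2) is then immediate: Part 1) shows the determinant depends only on $m$, so $h$ is well defined, and smoothness of $h$ follows by evaluating it in any local section $s$ of $P$, for which $\{s,x_{i}\}$ is a smooth local frame of $\mathfrak{G}$.

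The content is in Part 3). I would work over the domain of a local section $s:U \to P$, setting $\eta_{i}:=\{s,x_{i}\}$ (a local frame of $\mathfrak{G}$), $G_{ij}:=G(\eta_{i},\eta_{j})$, and $h=\det(G_{ij})$. For $v \in TM$ the standard determinant identity gives $v[\ln h]=G^{ij}\,v[G_{ij}]$, where $(G^{ij})$ is the inverse matrix. On the other hand, by definition of the covariant derivative,
\[
(D_{v}G)(\eta_{i},\eta_{j})=v[G_{ij}]-G(D_{v}\eta_{i},\eta_{j})-G(\eta_{i},D_{v}\eta_{j}).
\]
Here I would invoke Lemma \ref{PROPBRA} c), which yields $D_{v}\eta_{i}=\{s,[\xi,x_{i}]\}=\{s,\ad_{\xi}x_{i}\}$ with $\xi:=(s^{*}\bar{A})v \in \mathfrak{g}$. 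Writing $\ad_{\xi}x_{i}=(\ad_{\xi})_{ki}\,x_{k}$ and contracting against $G^{ij}$, each of the two connection terms collapses via $\sum_{j}G^{ij}G_{kj}=\delta^{i}_{k}$ to $\tr\ad_{\xi}$, so that
\[
G^{ij}(D_{v}G)(\eta_{i},\eta_{j}) = v[\ln h] - 2\tr \ad_{\xi}.
\]
Since $\mathfrak{g}$ is nilpotent, $\tr\ad_{\xi}=0$, and the left-hand side is precisely $DG(\cdot,\cdot)$ evaluated on $v$ in the trace notation; hence $DG(\cdot,\cdot)=d(\ln h)$.

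I expect the only real obstacle to be the bookkeeping in Part 3): verifying that the two metric–connection terms each contract to $\tr\ad_{\xi}$ rather than to some frame-dependent expression, and observing that, although $\xi=(s^{*}\bar{A})v$ depends on the chosen section $s$, the final identity is between two intrinsic objects and holds for any $s$ precisely because $\tr\ad_{\xi}=0$ irrespective of which $\xi$ arises. Conceptually, everything collapses to the single fact that nilpotency forces $\tr\ad_{\xi}=0$ (equivalently $\det Ad_{a}=1$), which drives Parts 1) and 3) in exactly the same way.
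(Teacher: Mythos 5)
Your proof is correct. Note that the paper itself gives no proof of this lemma --- it is quoted from \cite{GS1} --- so there is no internal argument to compare against; your route is the natural one and is self-contained within the tools the paper does provide. Both halves of your argument check out: the Gram matrices at $p$ and $p'=p\cdot a$ differ by congruence with $Ad_{a}$, so Part 1) reduces to $(\det Ad_{a})^{2}=1$, which holds since a connected nilpotent group is unimodular; and in Part 3) the contraction $G^{ij}G(D_{v}\eta_{i},\eta_{j})=\tr\ad_{\xi}$ with $\xi=(s^{*}\bar{A})v$, obtained from Lemma \ref{PROPBRA} c), vanishes by Engel's theorem, leaving exactly $v[\ln h]=G^{ij}v[G_{ij}]$ by Jacobi's formula. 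The one hypothesis you use that is only implicit in the paper is connectedness of $\mathcal{G}$ (needed for $\det Ad_{a}=1$ in Part 1, though not for Part 3, where nilpotency of $\ad_{\xi}$ suffices); this is harmless, as the structure groups arising here (e.g.\ $Nil^{3}$) are connected.
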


\subsection{Lie Algebroid Connection and Curvatures of $\bar{g}$} \label{SECLAC}
Given the above data, we obtain the following Lie algebroid connection $\nabla: \Gamma(\mathcal{E}) \rightarrow \Gamma(\mathcal{E}^{*}\otimes \mathcal{E})$: 
\begin{defn} \label{DEFNAB}
Letting $e_{1}, e_{2}\in \Gamma(\mathcal{E})$, define $\nabla_{e_{1}} e_{2}$ to be the unique section of $\mathcal{E}$ that satisfies 
\[\widetilde{\nabla_{e_{1}} e_{2}}= \nabla^{\bar{g}}_{\tilde{e_{1}}}\tilde{e_{2}}, \]  where $\nabla^{\bar{g}}$ is the Levi Civita connection on $TP$ associated with $\bar{g}$.
\end{defn}
For some properties, we have
\begin{lemma} \label{PROPNAB1} Let $e_{i} \in \Gamma(\mathcal{E})$ and $ f \in C^{\infty}(M)$.  Then
\begin{enumerate}
\item[1)] $\nabla_{e_{1}}e_{2} -  \nabla_{e_{2}}e_{1} = [e_{1},e_{2}]$
\item[2)] $\nabla_{e_{1}}(fe_{2})=f \nabla_{e_{1}}e_{2} + \tau(e_{1})[f]e_{2}$
\item[3)] $\tau(e_{1})[g_{_{\mathcal{E}}}(e_{2},e_{3})]= g_{_{\mathcal{E}}}(\nabla_{e_{1}}e_{2},e_{3}) + g_{_{\mathcal{E}}}(e_{2},\nabla_{e_{1}}e_{3})$.
\end{enumerate}
\end{lemma}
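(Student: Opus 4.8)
The plan is to prove all three properties by lifting to the total space $P$ and invoking the corresponding standard identities for the Levi--Civita connection $\nabla^{\bar{g}}$, then transporting the conclusions back through the correspondence $e \mapsto \tilde{e} = \lambda^{-1}\pi^{*}e$. The structural facts I would record first are: (i) the assignment $e \mapsto \tilde{e}$ is injective, so it suffices to verify each identity after applying $\widetilde{\cdot}$; (ii) for $f \in C^{\infty}(M)$ one has $\widetilde{fe} = (\pi^{*}f)\tilde{e}$; (iii) $\tilde{e}$ is $\pi$-related to $\tau(e)$, that is $\pi_{*}\tilde{e} = \tau(e)$, which is immediate from $\lambda(Z) = \{p,\bar{A}Z\} + \pi_{*}Z$ since the $TM$-component of $\lambda(Z)$ is $\pi_{*}Z$; and (iv) $\bar{g}(\tilde{e_1},\tilde{e_2}) = \pi^{*}(g_{_{\mathcal{E}}}(e_1,e_2))$, which holds because $g_{_{\mathcal{E}}}$ is by definition the tensor on $\mathcal{E}$ corresponding to $\bar{g}$ under the correspondence of Section \ref{SECRFP1}.

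For Part 1), I would apply $\widetilde{\cdot}$ and use that $\nabla^{\bar{g}}$ is torsion-free: by Definition \ref{DEFNAB} and the definition of $[,]$,
\[ \widetilde{\nabla_{e_1}e_2 - \nabla_{e_2}e_1} = \nabla^{\bar{g}}_{\tilde{e_1}}\tilde{e_2} - \nabla^{\bar{g}}_{\tilde{e_2}}\tilde{e_1} = [\tilde{e_1},\tilde{e_2}]_{Lie} = \widetilde{[e_1,e_2]}, \]
and injectivity from fact (i) gives the claim. For Part 2), using fact (ii) and the Leibniz rule for $\nabla^{\bar{g}}$,
\[ \widetilde{\nabla_{e_1}(fe_2)} = \nabla^{\bar{g}}_{\tilde{e_1}}\big((\pi^{*}f)\tilde{e_2}\big) = (\pi^{*}f)\,\nabla^{\bar{g}}_{\tilde{e_1}}\tilde{e_2} + \tilde{e_1}[\pi^{*}f]\,\tilde{e_2}. \]
By fact (iii), $\tilde{e_1}[\pi^{*}f] = \pi^{*}(\tau(e_1)[f])$, so the right-hand side equals $\widetilde{f\,\nabla_{e_1}e_2} + \widetilde{\tau(e_1)[f]\,e_2}$; injectivity again yields Part 2).

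For Part 3), I would combine metric compatibility of $\nabla^{\bar{g}}$ with fact (iv). Differentiating $\bar{g}(\tilde{e_2},\tilde{e_3}) = \pi^{*}(g_{_{\mathcal{E}}}(e_2,e_3))$ along $\tilde{e_1}$,
\[ \tilde{e_1}\big[\pi^{*}(g_{_{\mathcal{E}}}(e_2,e_3))\big] = \bar{g}\big(\nabla^{\bar{g}}_{\tilde{e_1}}\tilde{e_2},\tilde{e_3}\big) + \bar{g}\big(\tilde{e_2},\nabla^{\bar{g}}_{\tilde{e_1}}\tilde{e_3}\big). \]
The left-hand side equals $\pi^{*}\big(\tau(e_1)[g_{_{\mathcal{E}}}(e_2,e_3)]\big)$ by fact (iii), and by fact (iv) the right-hand side equals $\pi^{*}\big(g_{_{\mathcal{E}}}(\nabla_{e_1}e_2,e_3) + g_{_{\mathcal{E}}}(e_2,\nabla_{e_1}e_3)\big)$. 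Since $\pi^{*}$ is injective on functions, Part 3) follows.

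The three Levi--Civita identities (torsion-freeness, Leibniz, metric compatibility) do the real work, so the only genuine content — and the main point to handle carefully — is the verification of the translation facts (ii)--(iv): that $\widetilde{\cdot}$ intertwines multiplication by pulled-back functions, the anchor $\tau$ with the pushforward $\pi_{*}$, and $g_{_{\mathcal{E}}}$ with $\bar{g}$. These rest on the explicit form of $\lambda$ and on the invariance of the objects involved, which guarantees that $\nabla^{\bar{g}}_{\tilde{e_1}}\tilde{e_2}$ and $\bar{g}(\tilde{e_2},\tilde{e_3})$ are themselves invariant and hence descend, respectively, to a section of $\mathcal{E}$ and a function on $M$. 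I expect no serious difficulty here beyond bookkeeping, since all four facts follow directly from the way the correspondence and $g_{_{\mathcal{E}}}$ were set up in Section \ref{SECRFP1}.
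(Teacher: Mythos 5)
Your proof is correct, and it is the argument the paper intends: the paper states Lemma \ref{PROPNAB1} without proof as routine background (the machinery of Section \ref{SECRFP1} and Definition \ref{DEFNAB} is set up precisely so that torsion-freeness, the Leibniz rule, and metric compatibility of $\nabla^{\bar{g}}$ transfer through the correspondence $e \mapsto \tilde{e}$). Your translation facts (i)--(iv) are exactly the bookkeeping that makes this transfer legitimate, so nothing is missing.
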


Letting $\nabla^{g}$ be the Levi Civita connection of $g$, we have the following decomposition of $\nabla$ \cite{GS1}:
\begin{lemma} \label{LEMNABDEC} Let $\eta$, $\eta_{i} \in \Gamma(\mathfrak{G})$ and $v$, $v_{a} \in \Gamma(TM)$.  The following holds true: 
\begin{enumerate}
\item[1)] $\nabla_{v_{1}}v_{2}=\nabla^{g}_{v_{1}}v_{2} -\tfrac{1}{2}F(v_{1},v_{2})$
\item[2)] $\nabla_{v}\eta= D_{v}\eta +\tfrac{1}{2}G^{-1}D_{v}G(\eta, *) +\tfrac{1}{2}g^{-1}G(F(v,*), \eta)$
\item[3)] $\nabla_{\eta}v= \tfrac{1}{2}G^{-1}D_{v}G(\eta,*)+\tfrac{1}{2}g^{-1}G(F(v,*),\eta)$
\item[4)] $\nabla_{\eta_{1}}\eta_{2}= -\tfrac{1}{2}g^{-1}D_{*}G(\eta_{1},\eta_{2})+ \tfrac{1}{2}[\eta_{1},\eta_{2}] +\tfrac{1}{2}G^{-1}G([*,\eta_{1}],\eta_{2})+\tfrac{1}{2}G^{-1}G([*,\eta_{2}],\eta_{1})$.
\end{enumerate}
\end{lemma}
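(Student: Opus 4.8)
The plan is to exploit the two structural properties of the algebroid Levi--Civita connection recorded in Lemma \ref{PROPNAB1}: metric compatibility (part 3), compatibility of the torsion with the algebroid bracket (part 1), and the Leibniz rule over $C^\infty(M)$ (part 2). Exactly as in ordinary Riemannian geometry, these force a Koszul-type formula. Expanding $\tau(e_1)[g_{\mathcal{E}}(e_2,e_3)]$, $\tau(e_2)[g_{\mathcal{E}}(e_3,e_1)]$ and $\tau(e_3)[g_{\mathcal{E}}(e_1,e_2)]$ by part 3 and then eliminating the symmetric combinations with part 1 yields
\begin{align*}
2 g_{\mathcal{E}}(\nabla_{e_1} e_2, e_3) &= \tau(e_1)[g_{\mathcal{E}}(e_2,e_3)] + \tau(e_2)[g_{\mathcal{E}}(e_1,e_3)] - \tau(e_3)[g_{\mathcal{E}}(e_1,e_2)] \\
&\quad + g_{\mathcal{E}}([e_1,e_2],e_3) - g_{\mathcal{E}}([e_1,e_3],e_2) - g_{\mathcal{E}}([e_2,e_3],e_1).
\end{align*}
The one thing to verify is that the classical cyclic manipulation uses nothing beyond parts 1--3 of Lemma \ref{PROPNAB1}; since the anchor $\tau$ appears precisely in the slots where directional derivatives of the functions $g_{\mathcal{E}}(e_i,e_j)\in C^\infty(M)$ occur, the derivation is formally identical to the Riemannian one.

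With this formula in hand, I would evaluate it on the four combinations of a horizontal and a fiber argument, testing each time against $e_3\in\Gamma(\mathfrak{G})$ and against $e_3\in\Gamma(TM)$ separately; because $g_{\mathcal{E}}=G\oplus g$ is an orthogonal splitting, these two tests determine the $\mathfrak{G}$- and $TM$-components of $\nabla_{e_1}e_2$ independently. The remaining inputs are elementary: the cross pairings $g_{\mathcal{E}}(\eta,v)$ vanish, the anchor satisfies $\tau(\eta)=0$ and $\tau(v)=v$, and the brackets come from Lemma \ref{PROPBRA} and (\ref{f:Liealgconn}), namely $[v,\eta]=D_v\eta$, $[\eta_1,\eta_2]$ is the fiber Lie bracket on $\mathfrak{G}$, and $[v,w]=[v,w]_{Lie}-F(v,w)$. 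For example, for $\nabla_{v_1}v_2$ the $\mathfrak{G}$-test collapses to the single surviving term $g_{\mathcal{E}}([v_1,v_2],\eta)=-G(F(v_1,v_2),\eta)$, giving the fiber component $-\tfrac12 F(v_1,v_2)$, while the $TM$-test reproduces the ordinary Koszul formula for $\nabla^g$; this is part 1.

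For the mixed and fiber cases the mechanical core is converting each anchor term $v[G(\eta_1,\eta_2)]$ into $(D_vG)(\eta_1,\eta_2)+G(D_v\eta_1,\eta_2)+G(\eta_1,D_v\eta_2)$, which is just the definition of the tensor derivative $D_vG$; the $D$-terms then cancel against the bracket terms, leaving exactly the $DG$ and $F$ contributions. After identifying the $G$- and $g$-duals in the sense of Notation \ref{NOTATILDE} (so that $w\mapsto G(F(v,w),\eta)$ becomes $g^{-1}G(F(v,*),\eta)$ and $\eta'\mapsto (D_vG)(\eta,\eta')$ becomes $G^{-1}D_vG(\eta,*)$), parts 3 and 4 follow. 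Part 2 I would not derive directly: by torsion-freeness, $\nabla_v\eta=\nabla_\eta v+[v,\eta]=\nabla_\eta v+D_v\eta$, which turns part 3 into part 2 at once. I expect the main obstacle to be purely organizational---keeping signs straight when antisymmetrizing the fiber brackets (e.g. rewriting $-G([\eta_1,\eta_3],\eta_2)$ as $+G([\eta_3,\eta_1],\eta_2)$ to match the $G^{-1}G([*,\eta_1],\eta_2)$ form) and correctly absorbing the anchor derivatives into $DG$---rather than anything conceptually deep, since once the Koszul formula is granted every term is forced.
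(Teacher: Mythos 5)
Your proposal is correct, but there is no in-paper argument to compare it against: the paper states Lemma \ref{LEMNABDEC} with a citation to \cite{GS1} and gives no proof, so your derivation stands on its own. It is also fully self-contained relative to what the paper does state. The algebroid Koszul formula follows from parts 1) and 3) of Lemma \ref{PROPNAB1} by the verbatim Riemannian manipulation (the Leibniz rule, part 2), is not even needed, since $\nabla$ already exists by Definition \ref{DEFNAB} and you only need an identity, not an existence argument). The evaluation step is likewise sound: orthogonality of $g_{_{\mathcal{E}}}=G\oplus g$ lets the two tests determine the $\mathfrak{G}$- and $TM$-components independently, $\tau|_{\mathfrak{G}}=0$ kills the corresponding anchor terms, and the bracket inputs are exactly $[v,\eta]=D_{v}\eta$ from (\ref{f:Liealgconn}), $[v,w]=[v,w]_{Lie}-F(v,w)$ from Lemma \ref{PROPBRA}, and the fiberwise bracket on $\mathfrak{G}$. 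I spot-checked all four cases and they come out right: for part 1) the only surviving fiber term is $g_{_{\mathcal{E}}}([v_{1},v_{2}],\eta)=-G(F(v_{1},v_{2}),\eta)$; for part 3) the $\mathfrak{G}$-test collapses to $v[G(\eta,\eta_{2})]-G(D_{v}\eta,\eta_{2})-G(D_{v}\eta_{2},\eta)=(D_{v}G)(\eta,\eta_{2})$ and the $TM$-test to $G(F(v,w),\eta)$; for part 4) the $TM$-test gives $-(D_{w}G)(\eta_{1},\eta_{2})$ and the $\mathfrak{G}$-test gives $\tfrac{1}{2}\big(G([\eta_{1},\eta_{2}],\eta_{3})+G([\eta_{3},\eta_{1}],\eta_{2})+G([\eta_{3},\eta_{2}],\eta_{1})\big)$, which is precisely the pairing of the statement with $\eta_{3}$ after the sign flips you anticipate; and deducing part 2) from part 3) by torsion-freeness is clean. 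The only point worth making explicit in a final write-up is that the algebroid bracket of two sections of $\mathfrak{G}$ is tensorial and coincides with the fiberwise Lie bracket appearing in the statement (Lemma \ref{PROPBRA}, part a)), so that the term $g_{_{\mathcal{E}}}([\eta_{1},\eta_{2}],\eta_{3})$ in the Koszul formula really is the bracket named in part 4).
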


Given $\nabla$, we define the curvature $\Rm^{\nabla} \in \Gamma(\wedge^{2}\mathcal{E}^{*} \otimes \End \mathcal{E})$ via
\begin{align} \label{f:curvdef}
Rm^{\nabla}(e_{1},e_{2})e_{3}=\nabla_{e_{1}}\nabla_{e_{2}}e_{3} - \nabla_{e_{2}}\nabla_{e_{1}}e_{3}- \nabla_{[e_{1},e_{2}]}e_{3},
\end{align}
for $e_{i} \in \Gamma(\mathcal{E})$.  It corresponds to the usual curvature tensor $\bar{\Rm} \in \Gamma(\wedge^{2}T^{*}P \otimes \End TP)$ of $\bar{g}$ on $P$.

We then define $\Rm^{\nabla}(e_{1},e_{2},e_{3},e_{4})= g_{_{\mathcal{E}}}(\Rm^{\nabla}(e_{1},e_{2})e_{3},e_{4})$. The Ricci and scalar curvatures of $\nabla$, $\Ric^{\nabla}$ and $\R^{\nabla}$, are defined using the appropriate traces with respect to $g_{_{\mathcal{E}}}$.

We will now decompose $\Ric^{\nabla}$ based on the splitting $\mathcal{E}= \mathfrak{G} \oplus TM$. First, we define  the following Ricci and scalar curvatures associated with $G$ which correspond to  the formulas in (\ref{EQRIC}) and (\ref{EQRIC2}). 

\begin{defn} \label{DEFRIC2} Let $G$ be a fiberwise Riemannian metric on the adjoint bundle $\mathfrak{G}$ of a nilpotent $\mathcal{G}$-principal bundle, $P \rightarrow M$. Define $\Ric_{G} \in \Gamma(\mathfrak{G}^{*} \otimes \mathfrak{G}^{*} )$, $\Rc_{G}\in \Gamma(\End \mathfrak{G})$ and $R_{G}\in C^{\infty}(M)$ as follows: 
\begin{align}
   \nonumber \Ric_{G}&=  -\frac{1}{2}G([*,\cdot],[*,\cdot]) +\frac{1}{4}G([\cdot_{1},\cdot_{2}],*)G([\cdot_{1},\cdot_{2}],*)
   \\ \Rc_{G}&=G^{-1} \Ric_{G} 
  \\ \nonumber \R_{G}&=-\frac{1}{4}|[,]|^{2}.
\end{align}
\end{defn}
We then have \cite{GS1}
\begin{lemma} \label{p:Ricci} Let $\eta, \eta_{i} \in \mathfrak{G}$ and $v,v_{i} \in TM$. Assuming $\mathcal{G}$ is nilpotent, the following holds true
\begin{align*}
\Ric^{\nabla}(\eta_{1},\eta_{2})=&\ \Ric_{G}(\eta_{1},\eta_{2}) -\tfrac{1}{2}(D_{\cdot}DG)_{\cdot}(\eta_{1},\eta_{2}) 
-\tfrac{1}{4}D_{\cdot}G(\cdot_{1},\cdot_{1})D_{\cdot}G(\eta_{1},\eta_{2})
\\& +\tfrac{1}{2}D_{\cdot}G(\eta_{1},\cdot_{1})D_{\cdot}G(\cdot_{1},\eta_{2}) 
 +\tfrac{1}{4}G(F(\cdot_{1},\cdot_{2}), \eta_{1})G(F(\cdot_{1},\cdot_{2}), \eta_{2})
\\ \Ric^{\nabla}(\eta,v)=&\ \tfrac{1}{2}G(\eta,D_{\cdot}F(v,\cdot)) 
+\tfrac{1}{2}D_{\cdot}G(\eta,F(v,\cdot))
+ \tfrac{1}{4}G(\eta, F(v,\cdot_{2}))D_{\cdot_{2}}G(\cdot_{1},\cdot_{1}) 
\\& -\tfrac{1}{2}D_{v}G([\cdot,\eta],\cdot)
\\ \Ric^{\nabla}(v_{1},v_{2}) =&\ \Ric_{g}(v_{1},v_{2})
 -\tfrac{1}{2}(D_{v_{1}}DG)_{v_{2}}(\cdot,\cdot) 
+\tfrac{1}{4}D_{v_{1}}G(\cdot_{1},\cdot_{2})D_{v_{2}}G(\cdot_{1},\cdot_{2}) 
\\& -\tfrac{1}{2}G(F(v_{1},\cdot),F(v_{2},\cdot)).
\end{align*}
\end{lemma}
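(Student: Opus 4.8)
\emph{Proof plan.} By Lemma \ref{PROPNAB1} the Lie algebroid connection $\nabla$ is torsion-free and compatible with $g_{_{\mathcal{E}}}$, so it is the Levi-Civita connection of the algebroid metric; consequently $\Rm^{\nabla}$ has all the usual curvature symmetries, and $\Ric^{\nabla}$ is computed from (\ref{f:curvdef}) by tracing over a $g_{_{\mathcal{E}}}$-orthonormal frame. Since $\Rm^{\nabla}$ corresponds to the genuine Riemann tensor $\bar{\Rm}$ of $\bar g$ on $P$, this is really an invariant repackaging of the Levi-Civita curvature of the bundle metric, and the three displayed identities are simply its $\mathfrak{G}\oplus TM$ block decomposition. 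The plan is to substitute the explicit formulas for $\nabla$ from Lemma \ref{LEMNABDEC} directly into (\ref{f:curvdef}) for each admissible triple of inputs, and then contract.

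To keep the second-order bookkeeping finite I would fix a point $m\in M$ and evaluate all tensors there in an adapted frame: geodesic normal coordinates $\{e_a\}$ for $(M,g)$ centered at $m$, so the Christoffel symbols of $g$ vanish at $m$, together with a local section $s$ of $P$ chosen (exponential gauge) so that $(s^{*}\bar A)|_m=0$. By Lemma \ref{PROPBRA}(c) this gauge forces $D_v\{s,x\}|_m=0$, so the frame $\{\zeta_i\}=\{s,x_i\}$ of $\mathfrak{G}$ is $D$-parallel at $m$ and all first-order connection-coefficient terms drop out. What survives are genuine curvature contributions: the second covariant derivatives $D_\cdot DG$, the products of first derivatives $D_\cdot G$ (which do \emph{not} vanish, as the fiber metric still varies over $M$), the fiberwise Lie bracket, and the curvature $F$ (whose value at $m$ is $d(s^{*}\bar A)|_m$, generically nonzero). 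Since $\nabla$ differs from $\nabla^g$ on $TM$ only by the $\tfrac12 F$-term of Lemma \ref{LEMNABDEC}(1), I would carry that correction explicitly throughout.

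With these choices I would split the trace as $\Ric^{\nabla}(e_1,e_2)=\sum_i \big< \Rm^{\nabla}(\zeta_i,e_1)e_2,\zeta_i \big>+\sum_a \big< \Rm^{\nabla}(e_a,e_1)e_2,e_a \big>$ and treat each block. For $\Ric^{\nabla}(\eta_1,\eta_2)$: the pure-fiber part of Lemma \ref{LEMNABDEC}(4), namely $\tfrac12[\eta_1,\eta_2]+\tfrac12 G^{-1}G([*,\eta_1],\eta_2)+\tfrac12 G^{-1}G([*,\eta_2],\eta_1)$, is exactly the left-invariant Levi-Civita connection of $(\mathfrak{G}|_m,G)$, so the fiber-direction trace reproduces the nilpotent Lie-group formula (\ref{EQRIC}) and yields the $\Ric_G(\eta_1,\eta_2)$ term; the $-\tfrac12 g^{-1}D_\cdot G$ piece is an O'Neill-type second-fundamental-form term whose square produces the quadratic corrections $-\tfrac14 D_\cdot G(\cdot_1,\cdot_1)D_\cdot G(\eta_1,\eta_2)$ and $+\tfrac12 D_\cdot G(\eta_1,\cdot_1)D_\cdot G(\cdot_1,\eta_2)$, while the base-direction trace contributes the Laplacian $-\tfrac12 (D_\cdot DG)_\cdot(\eta_1,\eta_2)$ and, through parts (2)--(3) of Lemma \ref{LEMNABDEC}, the curvature square $+\tfrac14 G(F(\cdot_1,\cdot_2),\eta_1)G(F(\cdot_1,\cdot_2),\eta_2)$. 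For $\Ric^{\nabla}(v_1,v_2)$ the base-direction trace returns $\Ric_g$ together with the classical submersion term $-\tfrac12 G(F(v_1,\cdot),F(v_2,\cdot))$, and the fiber-direction trace returns $-\tfrac12 (D_{v_1}DG)_{v_2}(\cdot,\cdot)$ and $+\tfrac14 D_{v_1}G(\cdot_1,\cdot_2)D_{v_2}G(\cdot_1,\cdot_2)$. The mixed block $\Ric^{\nabla}(\eta,v)$ is where the divergence of $F$ enters, producing $\tfrac12 G(\eta,D_\cdot F(v,\cdot))$ with its lower-order companions.

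The main obstacle is organizational rather than conceptual: matching the fiber--fiber--fiber contractions against (\ref{EQRIC}) with the correct $\tfrac14$ and $\tfrac12$ coefficients, and checking that the many $D_\cdot G$-quadratic terms arising from the four summands of Lemma \ref{LEMNABDEC}(4) collapse into exactly the two quadratic expressions displayed. Here nilpotency is essential: it makes $(\mathfrak{G}|_m,[\,,])$ unimodular, so the trace $\sum_i \big<[\cdot,\zeta_i],\zeta_i\big>$ and the associated mean-curvature terms vanish, which is precisely what removes the stray first-order-in-bracket terms that would otherwise spoil (\ref{EQRIC}) and the mixed block. A secondary care point is to avoid double-counting when $D$ is extended to all of $\mathcal{E}$ as $D\oplus\nabla^g$ (Notation \ref{NOTAD1}), and to use Lemma \ref{propGRAD} and Lemma \ref{LEMDZ} (e.g.\ $\tr_G DG=d\ln h$, and $D$-parallelism of central sections) only as consistency checks on the surviving terms rather than as inputs.
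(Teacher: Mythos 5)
The paper never proves this lemma: it is imported verbatim from \cite{GS1} (``We then have \cite{GS1}''), so there is no internal proof to compare against; judged on its own terms, your plan — substitute the block decomposition of $\nabla$ from Lemma \ref{LEMNABDEC} into (\ref{f:curvdef}) in an adapted frame (normal coordinates on $M$, a gauge with $(s^{*}\bar{A})|_{m}=0$ so the $\{s,x_i\}$-frame is $D$-parallel at $m$) and trace block by block — is the natural direct computation and is essentially how the cited source proceeds, with your term-by-term accounting of where each expression in the statement arises being consistent. One refinement: in the fiber--fiber block, nilpotency is needed not only for unimodularity ($\tr \ad_{\eta}=0$, killing the mean-curvature terms) but also for the vanishing of the Killing-form contribution $\tr(\ad_{\eta_{1}}\ad_{\eta_{2}})$, which is quadratic in the bracket and does \emph{not} vanish for general unimodular algebras; since both vanish for nilpotent fibers, the trace does reproduce (\ref{EQRIC}) exactly and your argument closes.
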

We also have 
\begin{lemma} \label{p:scalar} Assuming $\mathcal{G}$ is nilpotent, the following holds true
\begin{align*}
\R^{\N} =&\ \R_{g}+ \R_G -(D_{\cdot}DG)_{\cdot}(\cdot_{1},\cdot_{1}) 
-\tfrac{1}{4}D_{\cdot}G(\cdot_{1},\cdot_{1})D_{\cdot}G(\cdot_{2},\cdot_{2}) 
\\&\ +\frac{3}{4}|DG|^{2}
-\tfrac{1}{4}|F|^{2}.
\end{align*}
\end{lemma}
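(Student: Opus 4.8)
The plan is to obtain $\R^{\N}$ simply by tracing the Ricci curvature $\Ric^{\N}$, whose block components have already been computed in Lemma \ref{p:Ricci}. Since the fiberwise metric splits as $g_{_{\mathcal{E}}}=G\oplus g$ relative to $\mathcal{E}=\mathfrak{G}\oplus TM$, the matrix $g_{_{\mathcal{E}}}^{-1}$ is block diagonal, so the mixed component $\Ric^{\N}(\eta,v)$ contributes nothing to the trace and the scalar curvature is the sum of two partial traces,
\[
\R^{\N}=\tr_{G}\Ric^{\N}|_{\mathfrak{G}\times\mathfrak{G}}+\tr_{g}\Ric^{\N}|_{TM\times TM}.
\]
Thus no genuinely new geometric input is needed: the entire computation is a careful bookkeeping of the dot-traces (Notation \ref{NOTADOT}) in the two diagonal formulas of Lemma \ref{p:Ricci}.

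First I would trace the formula for $\Ric^{\N}(\eta_{1},\eta_{2})$ over the fiber directions using $G$. The term $\Ric_{G}$ traces to $\R_{G}$ by Definition \ref{DEFRIC2}; the Hessian term $-\tfrac12(D_{\cdot}DG)_{\cdot}(\eta_{1},\eta_{2})$ traces to $-\tfrac12(D_{\cdot}DG)_{\cdot}(\cdot_{1},\cdot_{1})$; the term $-\tfrac14 D_{\cdot}G(\cdot_{1},\cdot_{1})D_{\cdot}G(\eta_{1},\eta_{2})$ traces to $-\tfrac14 D_{\cdot}G(\cdot_{1},\cdot_{1})D_{\cdot}G(\cdot_{2},\cdot_{2})$; the term $+\tfrac12 D_{\cdot}G(\eta_{1},\cdot_{1})D_{\cdot}G(\cdot_{1},\eta_{2})$ closes up to $+\tfrac12|DG|^{2}$; and the curvature term $+\tfrac14 G(F(\cdot_{1},\cdot_{2}),\eta_{1})G(F(\cdot_{1},\cdot_{2}),\eta_{2})$ traces to $+\tfrac14|F|^{2}$.

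Next I would trace the formula for $\Ric^{\N}(v_{1},v_{2})$ over the base directions using $g$. Here $\Ric_{g}$ gives $\R_{g}$; the Hessian term $-\tfrac12(D_{v_{1}}DG)_{v_{2}}(\cdot,\cdot)$ gives a second copy of $-\tfrac12(D_{\cdot}DG)_{\cdot}(\cdot_{1},\cdot_{1})$; the term $+\tfrac14 D_{v_{1}}G(\cdot_{1},\cdot_{2})D_{v_{2}}G(\cdot_{1},\cdot_{2})$ gives $+\tfrac14|DG|^{2}$; and $-\tfrac12 G(F(v_{1},\cdot),F(v_{2},\cdot))$ gives $-\tfrac12|F|^{2}$.

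Finally I would add the two partial traces. The two Hessian contributions combine to $-(D_{\cdot}DG)_{\cdot}(\cdot_{1},\cdot_{1})$; the two $|DG|^{2}$ pieces combine with coefficient $\tfrac12+\tfrac14=\tfrac34$; and the two $F$ pieces combine with coefficient $\tfrac14-\tfrac12=-\tfrac14$. Together with $\R_{g}+\R_{G}$ and the surviving term $-\tfrac14 D_{\cdot}G(\cdot_{1},\cdot_{1})D_{\cdot}G(\cdot_{2},\cdot_{2})$, this reproduces the stated formula exactly. The only place demanding care—and the main (if mild) obstacle—is the consistent reading of the unlabelled dot-traces: in particular, verifying that the cross term $\tfrac12 D_{\cdot}G(\eta_{1},\cdot_{1})D_{\cdot}G(\cdot_{1},\eta_{2})$ traces to the full norm $\tfrac12|DG|^{2}$ rather than to some partially contracted expression, and confirming that the two independently computed $DDG$ contractions are literally the same scalar so that they genuinely add. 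Once the trace conventions of Notation \ref{NOTADOT} are pinned down, the remaining arithmetic is routine.
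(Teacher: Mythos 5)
Your proposal is correct: tracing the two diagonal blocks of $\Ric^{\nabla}$ from Lemma \ref{p:Ricci} with respect to $G\oplus g$, and checking that the cross term closes to $\tfrac12|DG|^{2}$, that the two $DDG$ contractions coincide as the full trace, and that the $F$-terms combine to $\tfrac14-\tfrac12=-\tfrac14$, is exactly the intended derivation. The paper itself states this lemma without proof (it is imported, along with Lemma \ref{p:Ricci}, from \cite{GS1}), and your trace computation is the standard route by which it follows, so there is nothing to add.
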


\begin{nota} \label{NOTABARR}
At times, we will respectively  identify $|\Ric^{\nabla}|^{2}$ and  $\R^{\nabla}$ with $|\bar{\Ric}|^{2}$ and  $\bar{\R}$, where  $\bar{\Ric}$ and $\bar{\R}$ are the Ricci and scalar curvatures of $\bar{g}$. We will also respectively denote $\Ric^{\nabla}(\pi_{\mathfrak{G}}, \pi_{\mathfrak{G}})$, $\Ric^{\nabla}(\pi_{\mathfrak{G}}, \pi_{TM})$ and $\Ric^{\nabla}(\pi_{TM}, \pi_{TM})$ by $\bar{\Ric}_{ij}$, $\bar{\Ric}_{ia}$ and $\bar{\Ric}_{ab}$. (Here, $\pi_{\mathfrak{G}}$ and  $\pi_{TM}$ are the natural projection maps of $\mathcal{E}= \mathfrak{G} \oplus TM$ onto $\mathfrak{G}$ and $TM$.)
\end{nota}

We will need the following lemma, whose proof is similar to that of Lemma \ref{LEMDRCA}. 
\begin{lemma}Let $S \in \mathfrak{G}^{*} \otimes \mathfrak{G}^{*}$ and $A \in \End \mathfrak{G}$. The following holds true: 
\begin{align}
\label{EQDRICS} \big< D\Ric_{G}, S \big> &=  
 \frac{1}{4}\big{<} \delta(G^{-1}DG), \delta(G^{-1}S) \big{>}
+ \big< \Ric_{G}(*,*), DG(*,\cdot)S(\cdot, *) \big> 
\\\label{EQDRC2} \big<D\Rc_{G}, A \big>&= \frac{1}{4} \big< \delta(G^{-1} DG), \delta(A^{*}) \big>.
\\ \nonumber   \text{Moreover, we have} 
\\ d |\Ric_{G}|^{2}&= \frac{1}{2}\big{<} \delta(G^{-1}DG), \delta(\Rc_{G}) \big{>}
\\ \label{EQDR4}  d\R_{G}&=- \big< \Ric_{G},DG \big>.
\end{align}
\end{lemma}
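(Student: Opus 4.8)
The plan is to mirror, almost verbatim, the proof of Lemma \ref{LEMDRCA}, since the four identities in the present lemma are the Lie-algebroid (fiberwise, $M$-parametrized) analogues of (\ref{LABDRIC1})--(\ref{EQDR}). The only structural difference is that the time derivative $\frac{d}{dt}$ is replaced by the connection $D$ acting on the adjoint bundle $\mathfrak{G}$, and the scalar time-parameter is replaced by a direction $v \in TM$ along which $D$ differentiates. The crucial enabling fact is that $D$ is compatible with the fiberwise Lie bracket in the same way $\frac{d}{dt}$ is compatible with the fixed bracket on $\mathfrak{g}$: since $\mathcal{G}$ is nilpotent and the bracket on each fiber of $\mathfrak{G}$ is induced from the Lie algebra $\mathfrak{g}$ (Lemma \ref{PROPBRA} a)), the bracket $[,]$ is $D$-parallel, i.e.\ $D$ differentiates it as if it were constant. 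This is precisely the property that makes the proof of Lemma \ref{LEMRCDEL1} hold pointwise over $M$, so that $\langle \Rc_G, A\rangle = \tfrac14\langle \delta(A),[,]\rangle$ remains valid fiberwise.

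First I would establish (\ref{EQDRC2}) by applying $D_v$, for a fixed $v \in TM$, to both sides of the fiberwise identity $\langle \Rc_G, A\rangle = \tfrac14\langle\delta(A),[,]\rangle$ from Lemma \ref{LEMRCDEL1}. Writing $\widehat{DG}:=G^{-1}D_vG$ and $\hat S := G^{-1}S$ exactly as in Lemma \ref{LEMDRCA}, the left side produces $\langle D_v\Rc_G, A\rangle + \langle \Rc_G, [\widehat{DG},A]\rangle$ because $D_v$ acts on the metric pairing by the product rule, with the commutator term arising from differentiating $G$ in the two slots of the inner product (this is the exact analogue of the step producing (\ref{LABRCA})). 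The right side produces $\tfrac14\langle \delta(A),\delta(\widehat{DG})\rangle$, since $D$ passes through the $D$-parallel bracket and only hits the two metric contractions in $\delta(A)$ (the analogue of (\ref{LABDELB})). Equating and invoking Lemma \ref{LEMPSI} part 2) to rewrite the commutator term via $\langle \Rc_G,[A,\widehat{DG}]\rangle = \tfrac14\langle\delta([A,\widehat{DG}]),[,]\rangle$ gives (\ref{EQDRC2}).

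Next, (\ref{EQDRICS}) follows from (\ref{EQDRC2}) by the same algebraic manipulation used to derive (\ref{LABDRIC1}) from (\ref{LABDRC1}): write $\langle D\Ric_G,S\rangle = \langle D\Rc_G + \widehat{DG}\,\Rc_G,\hat S\rangle$, substitute (\ref{EQDRC2}), and collect the remaining pairing into the term $\langle \Ric_G(*,*), DG(*,\cdot)S(\cdot,*)\rangle$. The identity for $d|\Ric_G|^2$ is then immediate from (\ref{EQDRICS}) with $S=\Ric_G$, exactly reproducing the computation of (\ref{EQDRIC3}), where the cross term $\langle\Ric_G(*,*),DG(*,\cdot)\Ric_G(\cdot,*)\rangle$ cancels against the term coming from differentiating the two copies of $G^{-1}$ implicit in $|\Ric_G|^2$. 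Finally, (\ref{EQDR4}) is the direct fiberwise analogue of (\ref{EQDR}): apply $D$ to $\R_G = -\tfrac14|[,]|^2$ and use that $[,]$ is $D$-parallel so only the metric contractions are differentiated, recognizing the resulting expression as $-\langle\Ric_G,DG\rangle$ via (\ref{EQRIC}).

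The main obstacle—really the only nontrivial point—is verifying that $D$ genuinely differentiates the fiberwise bracket as if it were covariantly constant, so that the scalar-parameter proof of Lemma \ref{LEMDRCA} transfers term-for-term. I would justify this using Lemma \ref{PROPBRA} c), which computes $D_v\{s,x\} = \{s,[(s^*\bar A)v,x]\}$ for a local section $s$ of $P$ and $x\in\mathfrak{g}$: since $\mathrm{ad}_{(s^*\bar A)v}$ is a derivation of $\mathfrak{g}$, differentiating $[\{s,x\},\{s,y\}]=-\{s,[x,y]\}$ along $v$ shows $D_v[\eta_1,\eta_2]=[D_v\eta_1,\eta_2]+[\eta_1,D_v\eta_2]$, i.e.\ the bracket is $D$-parallel. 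Once this is in hand, every appeal to "the bracket is constant" in the proof of Lemma \ref{LEMDRCA} is replaced by "the bracket is $D$-parallel," and the computations are otherwise identical, which is exactly why the paper remarks that the proof is similar to that of Lemma \ref{LEMDRCA}.
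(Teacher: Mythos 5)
Your proposal is correct and matches the paper's intent exactly: the paper gives no separate argument, stating only that the proof ``is similar to that of Lemma \ref{LEMDRCA},'' which is precisely the term-for-term transfer you carry out with $\frac{d}{dt}$ replaced by $D_v$. Your verification via Lemma \ref{PROPBRA} b), c) and the Jacobi identity that the fiberwise bracket is $D$-parallel supplies the one ingredient the paper leaves implicit, and it is exactly what makes the transfer legitimate.
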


\subsection{Dimensional Reduction of Invariant Ricci Flow} 
 In this section, we recast an invariant Ricci flow solution on a principal bundle as a solution of a system of PDEs on the base manifold. We also use flows of vector fields on the principal bundle to obtain gauge modified PDEs which will be important in analyzing the functionals defined in Definitions \ref{DEFWL} and \ref{DEFW2}.  

\subsubsection{Invariant Ricci Flow Equations}
Let $\bar{g}(t)$ be a time dependent invariant, Riemannian metric on  a nilpotent $\mathcal{G}$-principal bundle $\pi: P \rightarrow M$ which is fibered over a compact, $n$-dimensional manifold. 
Using the results of the previous subsections for each time $t$, we obtain a time dependent fiberwise metric $g_{_{\mathcal{E}}}(t)= G(t) \oplus g(t)$ on $\mathcal{E}= \mathfrak{G}\oplus TM$ and a time dependent connection $D$ on $\mathfrak{G}$. Following Notation \ref{NOTAD1}, we will extend $D$ to a connection on $\mathcal{E}= \mathfrak{G} \oplus TM$ by setting $D=D \oplus  \nabla^{g}$. 

For some other notation,  $\frac{\partial A}{\partial t}$ will denote the time dependent section of $T^{*}M \otimes \mathfrak{G}$ that is associated with $\frac{\partial \bar{A}}{\partial t}$, where $\bar{A}(t) $ are the connections on $P$ induced by $\bar{g}(t)$.  Similarly, $F(t):= F_{\bar{A}(t)} \in \Gamma(\wedge^{2}T^{*}M\otimes \mathfrak{G})$ corresponds to the curvature $\bar{F}(t):=d\bar{A}+\frac{1}{2}[\bar{A},\bar{A}]$.

As proved in \cite{GS1}, using Lemma \ref{p:Ricci}, $\bar{g}(t)$ is a Ricci flow solution if and only if the following equations hold true:
\begin{align} \label{EQRICCI}
&\ \frac{\partial G}{\partial t}(\eta_{1},\eta_{2}) = -2\Ric_{G}(\eta_{1},\eta_{2})+ (D_{\cdot}DG)_{\cdot}(\eta_{1},\eta_{2}) 
-D_{\cdot}G(\eta_{1},\cdot_{1})D_{\cdot}G(\cdot_{1},\eta_{2})
 \\ \nonumber & \ \qquad \qquad \qquad +\frac{1}{2}D_{\cdot}G(\cdot_{1},\cdot_{1})D_{\cdot}G(\eta_{1},\eta_{2})  -\frac{1}{2}G(F(\cdot_{1},\cdot_{2}), \eta_{1})G(F(\cdot_{1},\cdot_{2}), \eta_{2})
\\ \nonumber &\ G(\frac{\del A}{\del t}v, \eta) = D_{v}G([\cdot,\eta],\cdot)  -G(D_{\cdot}F(v,\cdot),\eta) -D_{\cdot}G(F(v,\cdot),\eta) 
 \\  \nonumber & \qquad \qquad \qquad -\frac{1}{2}G(F(v,\cdot_{2}),\eta)D_{\cdot_{2}}G(\cdot_{1},\cdot_{1}) 
\\ \nonumber &\ \frac{\partial g}{ \partial t}(v_{1},v_{2}) = -2\Ric_{g}(v_{1},v_{2}) +(D_{v_{1}}DG)_{v_{2}}(\cdot,\cdot) -\frac{1}{2}D_{v_{1}}G(\cdot_{1},\cdot_{2})D_{v_{2}}G(\cdot_{1},\cdot_{2})
 \\ \nonumber & \qquad \qquad \qquad +G(F(v_{1},\cdot),F(v_{2},\cdot)).
\end{align}

In addition, when analyzing the functionals defined in Definitions \ref{DEFWL} and \ref{DEFW2}, it will be important to consider the following conjugate heat equation for $f(t) \in C^{\infty}(M)$:
\begin{align} \label{EQRC4}
 \frac{\partial}{\partial t}e^{-f}=-\Delta e^{-f} + \Big{(}R_{g} -\frac{1}{4}|DG|^{2} -\frac{1}{2}|F|^{2} +\frac{n}{2t} -\frac{1}{2}D_{\nabla f}G(\cdot,\cdot)\Big{)} e^{-f}.
\end{align}

\subsubsection{Gauged Invariant Ricci Flow Equations}
Given a solution $(\bar{g}(t),f(t))$ to (\ref{EQRICCI}) and (\ref{EQRC4}), let $X_{t} \in \Gamma(TP)$ be the  horizontal lift of  $w_{t}=-\frac{1}{2}g^{-1}DG(\cdot,\cdot) -\nabla f(t)$ with respect to $\bar{A}(t)$.  Pulling back by the flow $\psi_{t}$ generated by $X(t)$, with $\psi|_{t=1}=\mathbb{1}$, we obtain the following gauged Ricci flow equations \cite{GS1}:
\begin{align}
\label{EQGRC1}&\ \frac{\partial G}{\partial t}(\eta_{1},\eta_{2}) = -2\Ric_{G}(\eta_{1},\eta_{2})+ (D_{\cdot}DG)_{\cdot}(\eta_{1},\eta_{2}) 
-D_{\cdot_{2}}G(\eta_{1},\cdot_{1})D_{\cdot_{2}}G(\cdot_{1},\eta_{2})
 \\ \nonumber & \ \qquad \qquad \qquad  -\frac{1}{2}G(F(\cdot_{1},\cdot_{2}), \eta_{1})G(F(\cdot_{1},\cdot_{2}), \eta_{2}) -D_{\nabla f}G (\eta_{1},\eta_{2})
\\ &\ G(\frac{\del A}{\del t}v, \eta) = D_{v}G([\cdot,\eta],\cdot)  -G(D_{\cdot}F(v,\cdot),\eta) -D_{\cdot}G(F(v,\cdot),\eta) 
 \\ \nonumber & \qquad \qquad \qquad -G(F(\nabla f, v) , \eta)
\end{align}
\begin{align}
 &\ \frac{\partial g}{ \partial t}(v_{1},v_{2}) = -2\Ric_{g}(v_{1},v_{2}) +\frac{1}{2}D_{v_{1}}G(\cdot_{1},  
 \cdot_{2})D_{v_{2}}G(\cdot_{1}, \cdot_{2})
 \\ \nonumber & \qquad \qquad \qquad +G(F(v_{1},\cdot),F(v_{2},\cdot)) -2\nabla^{2}f
\\ \label{EQGRC4} & \frac{\partial}{\partial t}e^{-f}=-\Delta e^{-f} + \Big{(} |\nabla f|^{2} +R_{g} -\frac{1}{4}|DG|^{2} -\frac{1}{2}|F|^{2} +\frac{n}{2t} \Big{)} e^{-f}.
\end{align}

The above equations preserve a certain measure:
\begin{lemma}\label{LEMUC}
Given a solution $(\bar{g}(t),f(t))$ to (\ref{EQGRC1})--(\ref{EQGRC4}), one has $\frac{\partial}{\partial t} (u \hspace{.3mm} dV_{g})=0,$ where $u=\frac{e^{-f}}{(4 \pi t)^{\frac{n}{2}}}$. 
\end{lemma}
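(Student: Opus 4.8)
The plan is to show that the quantity $u\,dV_g = \frac{e^{-f}}{(4\pi t)^{n/2}}\,dV_g$ is time-independent by computing $\frac{\partial}{\partial t}(u\,dV_g)$ directly and verifying that it vanishes, using the gauged flow equations (\ref{EQGRC1})--(\ref{EQGRC4}). First I would write $\frac{\partial}{\partial t}(u\,dV_g) = \left(\frac{\partial u}{\partial t}\right)dV_g + u\,\frac{\partial}{\partial t}(dV_g)$ and treat the two factors separately. The evolution of the volume form is standard: since $\frac{\partial g}{\partial t} = \frac{\partial g}{\partial t}(v_1,v_2)$ is given by (\ref{EQGRC4})'s companion equation for $g$, we have $\frac{\partial}{\partial t}dV_g = \frac{1}{2}\tr_g\!\left(\frac{\partial g}{\partial t}\right)dV_g$, and I would compute this trace from the right-hand side of the $g$-equation.

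Next I would handle $\frac{\partial u}{\partial t}$ where $u = \frac{e^{-f}}{(4\pi t)^{n/2}}$. Writing $u = (4\pi t)^{-n/2}e^{-f}$, the time derivative splits into a term from differentiating the prefactor, namely $-\frac{n}{2t}u$, and a term from $\frac{\partial}{\partial t}e^{-f}$ which is supplied by the gauged conjugate heat equation (\ref{EQGRC4}). Substituting (\ref{EQGRC4}), the $\frac{n}{2t}$ contributions should cancel against the prefactor term, leaving $\frac{\partial u}{\partial t} = -\Delta e^{-f}\cdot(4\pi t)^{-n/2} + \left(|\nabla f|^2 + R_g - \frac{1}{4}|DG|^2 - \frac{1}{2}|F|^2\right)u$. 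The Laplacian term is the key piece: when I integrate or when I combine it with the volume-form evolution, I expect $-\Delta e^{-f}$ to combine with the scalar terms so that the total derivative of $u\,dV_g$ becomes a pure divergence, and in fact vanishes pointwise as a density once the trace $\frac{1}{2}\tr_g\!\left(\frac{\partial g}{\partial t}\right)$ is computed and matched against the remaining terms.

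The main obstacle, and the step requiring the most care, will be computing $\frac{1}{2}\tr_g\!\left(\frac{\partial g}{\partial t}\right)$ from the gauged $g$-equation and verifying that it exactly cancels the scalar curvature and $|DG|^2$, $|F|^2$ contributions coming from $\frac{\partial u}{\partial t}$. Taking the $g$-trace of the right-hand side of the $g$-equation yields $-R_g$ from the $\Ric_g$ term, $-\Delta f$ from the $-2\nabla^2 f$ term (since $\tr_g \nabla^2 f = \Delta f$), together with traces of the $DG$ and $F$ quadratic terms; I would identify these trace terms with $\frac{1}{4}|DG|^2$-type and $\frac{1}{2}|F|^2$-type quantities. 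The delicate bookkeeping is to confirm that $\frac{1}{2}g^{ab}D_{v_a}G(\cdot_1,\cdot_2)D_{v_b}G(\cdot_1,\cdot_2)$ and $g^{ab}G(F(v_a,\cdot),F(v_b,\cdot))$ trace precisely to the $|DG|^2$ and $|F|^2$ terms needed for cancellation.

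Finally, I would assemble the pieces: $\frac{\partial}{\partial t}(u\,dV_g) = \left(\frac{\partial u}{\partial t} + \frac{u}{2}\tr_g\frac{\partial g}{\partial t}\right)dV_g$, and the remaining obstruction to pointwise vanishing is the term $-(4\pi t)^{-n/2}\Delta e^{-f}$. This is handled by observing that $-\Delta e^{-f} = (-\Delta f + |\nabla f|^2)e^{-f}$, so the $|\nabla f|^2$ contributions cancel and the $-\Delta f\cdot e^{-f}$ piece cancels against the $-\Delta f$ arising from $-2\nabla^2 f$ in the trace of the $g$-equation. With all curvature, gradient, and Laplacian terms matched, the bracket reduces to zero and we conclude $\frac{\partial}{\partial t}(u\,dV_g)=0$, as claimed.
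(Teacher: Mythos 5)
Your computation is correct: the paper states Lemma \ref{LEMUC} without proof (treating it as routine), and your argument is exactly the intended verification. All three cancellations you identify do occur: the $\frac{n}{2t}$ prefactor term cancels against the $\frac{n}{2t}$ in (\ref{EQGRC4}); expanding $-\Delta e^{-f}=(\Delta f-|\nabla f|^{2})e^{-f}$ kills the $|\nabla f|^{2}$ from the heat equation and leaves $+\Delta f$, which cancels the $-\Delta f$ coming from $\tfrac{1}{2}\tr_{g}(-2\nabla^{2}f)$; and $\tfrac{1}{2}\tr_{g}$ of the remaining terms of the $g$-equation gives $-R_{g}+\tfrac{1}{4}|DG|^{2}+\tfrac{1}{2}|F|^{2}$, exactly cancelling the scalar terms in $\frac{\partial u}{\partial t}$, so $\frac{\partial}{\partial t}(u\,dV_{g})=0$ pointwise as claimed.
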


\section{Lott's Functional in the Nonabelian Setting} \label{SECLOTTF}
In this section, we derive new monotonicity results for Lott's functional for Ricci flows on nilpotent principal bundles.

The $\mathcal{W}_{+}$-functional in \cite{LOTT2} is given as follows: 
\begin{defn}\label{DEFWL} Let $\bar{g}$ be an invariant metric on a $\mathcal{G}$--principal bundle $P\rightarrow M$, which is fibered over a compact, $n$-dimensional  manifold. Also let  $f \in C^{\infty}(M)$ and $\tau \in \mathbb{R}_{>0}$.  Define
\begin{equation*}
\mathcal{\WW}_{L,+}(\bar{g},f, \tau)= \int_{M} \Big{(} \tau (|\nabla f|^{2}+R_{g} -\frac{1}{4} |DG|^{2} -\frac{1}{4} |F|^{2}) -f+n \Big{)} \frac{e^{-f}}{(4\pi \tau)^{\frac{n}{2}}}dV_{g}. 
\end{equation*}
\end{defn}
This functional is scale invariant: $\mathcal{W}_{L,+}(c\bar{g},f,c\tau)=\mathcal{W}_{L,+}(\bar{g},f,\tau)$, for $c>0$. 

Recalling the $\delta$-map as defined in (\ref{DEFDEL}) and Notation \ref{RMKDEL}, we have 

\begin{thm}\label{THMDWL} Let $\bar{g}(t)$ be an invariant Ricci flow solution defined for $t\in (0,\infty)$ on a nilpotent $\mathcal{G}$-principal bundle $P\rightarrow M$, which is fibered over a compact, $n$-dimensional   manifold. Suppose the curvature $F_{\bar{A}(t)}=0$.  Also let $f_{t} \in C^{\infty}(M)$ be a solution to  (\ref{EQRC4}) and define $u_{t}=\frac{e^{-f_{t}}}{(4\pi t)^{\frac{n}{2}}}$.  The following holds true: 
\begin{align}
  \frac{\del }{\del t}\WW_{L,+}(\bar{g}(t),f(t),t) =& \ \frac{t}{2} \int_{M} \big{|} (D_{\cdot}DG)_{\cdot}- D_{\cdot}G(*,\cdot_{1})D_{\cdot}G(*,\cdot_{1}) -D_{\nabla f}G \big{|}^{2} u \hspace{.1mm}dV_{g}
\\ \nonumber  & +  \frac{t}{2} \int_{M} \big{|} 2\Ric_{g} +\frac{g}{t} -\frac{1}{2}DG(\cdot_{1},\cdot_{2})DG(\cdot_{1},\cdot_{2}) +2\nabla^{2}f \big{|}^{2} u \hspace{.1mm}dV_{g}
\\  \nonumber  & +\frac{t}{4} \int_{M} \big{|}\delta(G^{-1}DG)  \big{|}^{2} u \hspace{.1mm}dV_{g}
\\ \nonumber & + t \int_{M} \big{|}DG([*, \cdot],\cdot)\big{|}^{2}u \hspace{.1mm}dV_{g}. 
\end{align}
\end{thm}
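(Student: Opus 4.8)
The plan is to follow Perelman's scheme, adapted to the bundle setting: differentiate under the integral sign against the fixed weighted measure, then reorganize the result into a sum of complete squares by integration by parts on the compact base $M$. Since $F_{\bar A(t)}=0$, the term $-\tfrac14|F|^2$ drops out of $\WW_{L,+}$ and, more importantly, the connection $D$ on $\mathfrak{G}$ becomes flat (its curvature is the adjoint action of $F$), so fiber-covariant derivatives commute; this is what makes the integration by parts tractable. Because $\WW_{L,+}$ is assembled from diffeomorphism-covariant scalars on $M$ integrated against $u\,dV_g$, its value is unchanged by pulling back along the bundle automorphism flow $\psi_t$; hence I may compute $\tfrac{d}{dt}\WW_{L,+}$ using the \emph{gauged} system (\ref{EQGRC1})--(\ref{EQGRC4}) in place of the ungauged Ricci flow. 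This trades the drift terms for the clean conjugate heat equation (\ref{EQGRC4}) and supplies exactly the $D_{\nabla f}G$ and $\nabla^2 f$ terms that appear in the statement.

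First I would write $\WW_{L,+}=\int_M \SS\, u\,dV_g$ with $\SS = t(|\nabla f|^2 + R_g - \tfrac14|DG|^2) - f + n$ and invoke Lemma \ref{LEMUC} to get $\tfrac{d}{dt}\WW_{L,+}=\int_M (\partial_t \SS)\,u\,dV_g$, so that no measure terms arise. Then I compute $\partial_t\SS$ term by term: (i) eliminate $\partial_t f$ using (\ref{EQGRC4}) rewritten as $\partial_t f = -\Delta f - R_g + \tfrac14|DG|^2 - \tfrac{n}{2t}$; (ii) evaluate $\partial_t R_g$ by the standard variation $\partial_t R_g = -\Delta\tr_g h + \divg\divg h - \langle \Ric_g, h\rangle$, with $h=\partial_t g$ from the gauged base equation; (iii) evaluate $\partial_t|\nabla f|^2$ using $\partial_t g^{-1}$ and $\nabla \partial_t f$; and (iv) evaluate $\partial_t|DG|^2$, the most involved piece, which receives contributions from $\partial_t g^{-1}$ (base), from $\partial_t G^{-1}$ and $\partial_t G$ in the fiber slots through the gauged $G$-equation, and from the evolution of the connection $\partial_t A$ entering $\partial_t(DG)$.

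Next I would integrate by parts against $u\,dV_g$, using that the drift Laplacian $\Delta_f=\Delta - \langle\nabla f,\nabla\cdot\rangle$ is self-adjoint for this measure; this converts the $\Delta f$, $\nabla^2 f$, $\divg\divg h$ and $(D_\cdot DG)_\cdot$ terms into the desired quadratic form. Writing the two soliton defects $T_1 = (D_\cdot DG)_\cdot - D_\cdot G(*,\cdot_1)D_\cdot G(*,\cdot_1) - D_{\nabla f}G$ and $T_2 = 2\Ric_g + \tfrac{g}{t} - \tfrac12 DG(\cdot_1,\cdot_2)DG(\cdot_1,\cdot_2) + 2\nabla^2 f$, the base terms should assemble (exactly as in Perelman) into $2t\,|\Ric_g + \nabla^2 f + \tfrac{g}{2t} - \tfrac14 DG(\cdot_1,\cdot_2)DG(\cdot_1,\cdot_2)|^2 = \tfrac{t}{2}|T_2|^2$, and the $\partial_t G$ fiber pieces of $\partial_t|DG|^2$ into $\tfrac{t}{2}|\partial_t G + 2\Ric_G|^2 = \tfrac{t}{2}|T_1|^2$. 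The genuinely new step is the bracket-dependent remainder: the $\Ric_G$ contributions are rewritten via Lemma \ref{LEMRCDEL1} together with the identities (\ref{EQDRICS})--(\ref{EQDR4}) so as to combine into $\tfrac{t}{4}|\delta(G^{-1}DG)|^2$, while the connection-evolution terms from $\partial_t A$ (which, with $F=0$, is determined by $G(\tfrac{\partial A}{\partial t}v,\eta)=D_vG([\cdot,\eta],\cdot)$) reorganize into $t\,|DG([*,\cdot],\cdot)|^2$. A consistency check is the abelian limit: when $\mathcal{G}$ is abelian the bracket vanishes, so $\Ric_G=0$, $\delta\equiv 0$ and $DG([*,\cdot],\cdot)=0$, and the formula collapses to Lott's original two-square identity.

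The main obstacle is precisely this last reorganization of the bracket terms. The naive term-by-term computation produces $\Ric_G$-linear expressions and $\partial_t A$-induced expressions that are not manifestly of one sign, and it is not a priori clear that they conspire into perfect squares rather than indefinite cross terms. The crux is to apply (\ref{EQDRICS})--(\ref{EQDR4}) and Lemma \ref{LEMRCDEL1} in the right order—trading $\Ric_G$ and $\Rc_G$ for $\delta(G^{-1}DG)$ contracted against the bracket—and to verify that the cross terms between the $\delta$-piece, the $DG([*,\cdot],\cdot)$-piece and the defects $T_1,T_2$ cancel after the fiberwise integration by parts. This is the ``subtle integration by parts'' referred to in the introduction, and it is the step where flatness of $D$ (from $F=0$) and the nilpotency of $\mathcal{G}$ are both essential.
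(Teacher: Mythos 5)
Your proposal follows essentially the same route as the paper's proof: differentiate $\WW_{L,+}$ along the gauged system (\ref{EQGRC1})--(\ref{EQGRC4}) using the measure preservation of Lemma \ref{LEMUC}, assemble the two perfect squares in the defects $T_1$ and $T_2$, extract $t\,|DG([*,\cdot],\cdot)|^2$ from the gauged $\partial_t A$ equation, and convert the leftover cross term $\langle \Ric_G, T_1\rangle$ into $\tfrac14|\delta(G^{-1}DG)|^2$ plus a divergence --- the paper packages this last step as Lemma \ref{LEMREL1}, whose proof uses exactly the identities (\ref{EQDRICS}) and Lemma \ref{LEMRCDEL1} that you cite. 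One small correction of emphasis: the hypothesis $F_{\bar A(t)}=0$ is used to remove the curvature terms from the evolution equations (so the squares close up as stated), not to make $D$ flat for the integration by parts --- Lemma \ref{LEMREL1} holds for an arbitrary invariant metric with no assumption on $F$.
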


\begin{cor}
In the setup of the above theorem, $\mathcal{W}_{L,+}(\bar{g}(t),f(t),t)$ is monotone. 
\end{cor}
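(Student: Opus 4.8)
The plan is to follow Perelman's strategy adapted to the bundle setting. \textbf{First}, since $\mathcal{W}_{L,+}$ is assembled from invariantly defined tensors on $M$, its value at each time is unchanged under pullback by the bundle automorphisms $\psi_t$ generated by the horizontal lift $X_t$. The gauged flow (\ref{EQGRC1})--(\ref{EQGRC4}) is exactly this pullback of $\bar g(t)$, so $\mathcal{W}_{L,+}$ evaluated along the gauged flow agrees with $\mathcal{W}_{L,+}(\bar g(t),f(t),t)$ and the time derivatives coincide. The advantage is Lemma \ref{LEMUC}: the measure $u\,dV_g$ is preserved by the gauged flow, so with $F=0$ one has
\[
\frac{\del}{\del t}\mathcal{W}_{L,+} = \int_M \frac{\del}{\del t}\Big( t\big(|\nabla f|^{2} + R_g - \tfrac14|DG|^{2}\big) - f + n\Big)\, u\,dV_g,
\]
and I only need the pointwise evolution of the integrand.

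\textbf{Second}, I would assemble the evolution equations of the constituents under the gauged flow. From (\ref{EQGRC4}) with $F=0$ one reads off $\partial_t f = -\Delta f - R_g + \tfrac14|DG|^{2} - \tfrac{n}{2t}$. The quantity $\partial_t R_g$ comes from the first variation of scalar curvature applied to $\partial_t g = -2\Ric_g + \tfrac12 DG(\cdot_1,\cdot_2)DG(\cdot_1,\cdot_2) - 2\nabla^{2}f$, and $\partial_t|\nabla f|^{2}$ from differentiating both $g^{-1}$ and $df$. These three pieces are exactly Perelman's ingredients, except that the base metric now carries the extra forcing term $\tfrac12 DG\cdot DG$; after the usual integrations by parts against the weighted measure they complete into the base square, namely term 2, which is $|\partial_t g - \tfrac{g}{t}|^{2}$ once the flow equation is substituted. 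The genuinely new object is $\partial_t|DG|^{2}$, which picks up $\partial_t g^{-1}$, $\partial_t G^{-1}$, and crucially the evolution of the connection $D$. The fiber factor enters through $\partial_t G = -2\Ric_G + (D_\cdot DG)_\cdot - DG(\cdot)DG(\cdot) - D_{\nabla f}G$, and this is the channel through which the nonabelian curvature $\Ric_G$ --- and hence the Lie bracket --- enters the derivative, even though $R_G$ and $|\Ric_G|^{2}$ never appear in Lott's functional itself.

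\textbf{Third} and most delicately, I would integrate by parts over the compact base and reorganize the fiber terms. The $\partial_t G$-contribution to $-\tfrac14 \partial_t|DG|^{2}$ produces $\langle \Ric_G,\cdot\rangle$-type terms together with $(D_\cdot DG)_\cdot$ and $D_{\nabla f}G$; completing the square in $\partial_t G + 2\Ric_G$ yields term 1. The evolution of $D$ enters through $\partial_t A$, which with $F=0$ is $G(\partial_t A\, v,\eta) = D_v G([\cdot,\eta],\cdot)$ --- precisely the bracketed structure $DG([*,\cdot],\cdot)$ --- and this is the origin of the bracket square, term 4. The remaining $\langle\Ric_G,\cdot\rangle$ cross terms are not manifestly sign definite, and this is the crux: using the Ricci formula (\ref{EQRIC}), the identity $\langle \Rc_G, A\rangle = \tfrac14\langle \delta(A),[,]\rangle$ of Lemma \ref{LEMRCDEL1}, and the derivation-defect identities (\ref{EQDRICS})--(\ref{EQDR4}), I would convert the $\langle\Ric_G,\cdot\rangle$ expressions into $\delta$-expressions and, after a further fiberwise integration by parts, close them up into the nonnegative square $\tfrac{t}{4}|\delta(G^{-1}DG)|^{2}$, term 3. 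The hypothesis $F=0$ is essential throughout: it removes the $|F|^{2}$ term, kills the $F$-dependent pieces of the $\partial_t g$, $\partial_t G$ and $\partial_t A$ equations, and is what lets the surviving bracket terms recombine into perfect squares rather than leaving an indefinite remainder.

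\textbf{The main obstacle} is exactly this last reorganization: tracking the nonabelian cross terms through the integrations by parts and verifying that they recombine, with the precise coefficients $\tfrac{t}{4}$ and $t$, into $|\delta(G^{-1}DG)|^{2}$ and $|DG([*,\cdot],\cdot)|^{2}$ rather than into sign-indefinite leftovers. This is the ``subtle integration by parts'' responsible for the hidden monotonicity of an a priori abelian functional. Once the identity is established, the corollary is immediate: all four integrands are squares, hence nonnegative, so $\frac{\del}{\del t}\mathcal{W}_{L,+}(\bar g(t),f(t),t)\geq 0$ and $\mathcal{W}_{L,+}$ is nondecreasing in $t$.
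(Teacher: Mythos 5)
Your proposal is correct and takes essentially the same route as the paper: it retraces the paper's proof of Theorem \ref{THMDWL} (pass to the gauged flow (\ref{EQGRC1})--(\ref{EQGRC4}), differentiate under the measure preserved by Lemma \ref{LEMUC}, and convert the $\langle \Ric_{G},\cdot\rangle$ cross terms via the integration-by-parts identity of Lemma \ref{LEMREL1} into the term $\frac{t}{4}\int_{M}|\delta(G^{-1}DG)|^{2}\,u\,dV_{g}$), after which monotonicity follows exactly as in the paper because the derivative is a sum of four manifestly nonnegative integrals. The only slip is cosmetic: the equation you read $\partial_{t}f$ from is the gauged (\ref{EQGRC4}), not (\ref{EQRC4}), which is consistent with your own setup.
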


\begin{rmk} \label{RMK64}
In \cite{LOTT2}, Lott proved the monotonicity of $\WW_{L,+}$ along an invariant Ricci flow on an abelian $\mathcal{G}$-principal bundle, without the $F=0$ assumption.
\end{rmk}

The main observation in proving Theorem \ref{THMDWL} for the nonabelian setting is the following lemma.

\begin{lemma} \label{LEMREL1} Let $\bar{g}$ be an invariant metric on a $\mathcal{G}$-principal bundle $P \rightarrow M$ and let $f \in C^{\infty}(M)$. We have
\begin{align}
\label{EQRICGD} e^{-f}&\Big{<}\Ric_{G}, (D_{\cdot}DG)_{\cdot}- D_{\cdot_{1}}G(*,\cdot)D_{\cdot_{1}}G(*,\cdot) -D_{\nabla f}G \Big{>}
\\& \nonumber= -\frac{1}{4}e^{-f}\Big{|} \delta(G^{-1}DG) \Big{|}^{2} -d^{*}\Big{(}e^{-f} \big< \Ric_{G},DG\big> \Big{)}.
\end{align}
\end{lemma}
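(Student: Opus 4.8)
The plan is to peel off the function $f$ first, reducing the identity to one that does not involve $f$ at all, and then to prove that reduced identity by expanding the codifferential $d^{*}$ and feeding the result into the first-order formula \eqref{EQDRICS}. Write $S := (D_{\cdot}DG)_{\cdot} - D_{\cdot_{1}}G(*,\cdot)D_{\cdot_{1}}G(*,\cdot) - D_{\nabla f}G$ for the argument on the left and $\beta := \langle \Ric_{G}, DG\rangle \in \Gamma(T^{*}M)$ for the one-form being differentiated on the right. Using the Leibniz rule $d^{*}(e^{-f}\beta) = e^{-f}d^{*}\beta + e^{-f}\,\beta(\nabla f)$ together with $\beta(\nabla f) = \langle \Ric_{G}, D_{\nabla f}G\rangle$, the $-D_{\nabla f}G$ summand inside $S$ cancels \emph{exactly} against the term produced by moving $\nabla f$ through the codifferential. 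After dividing by $e^{-f}$, the claim collapses to the purely $f$-free identity
\begin{equation*}
\langle \Ric_{G}, (D_{\cdot}DG)_{\cdot}\rangle - \langle \Ric_{G}, D_{\cdot_{1}}G(*,\cdot)D_{\cdot_{1}}G(*,\cdot)\rangle + d^{*}\langle \Ric_{G}, DG\rangle = -\tfrac{1}{4}\,|\delta(G^{-1}DG)|^{2}. \tag{$\star$}
\end{equation*}

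To establish $(\star)$ I would work pointwise in a $g$-orthonormal frame $\{e_{a}\}$ that is $\nabla^{g}$-normal at the chosen point, so that $d^{*}\langle \Ric_{G}, DG\rangle = -\sum_{a} e_{a}\!\left[\langle \Ric_{G}, D_{e_{a}}G\rangle\right]$. Differentiating the $G$-inner product by $e_{a}$ generates three kinds of terms: the second-order term $\langle \Ric_{G}, (D_{\cdot}DG)_{\cdot}\rangle$, which cancels the first term of $(\star)$; the first-order pairing $\langle D_{\cdot}\Ric_{G}, D_{\cdot}G\rangle$; and — this is the crux — a genuinely extra ``correction'' term, because $D$ is \emph{not} compatible with $G$ (precisely because $DG \neq 0$). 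Concretely, writing $\langle \Ric_{G}, D_{e_{a}}G\rangle = \tr\!\big(\Rc_{G}\,(G^{-1}D_{e_{a}}G)\big)$ and commuting $e_{a}$ past the trace forces the appearance of $G^{-1}D_{e_{a}}G$, which is exactly the non-metricity of $D$; the resulting correction is purely quadratic in $DG$ and reduces to a multiple of $\sum_{a}\tr\!\big(\Rc_{G}\,(G^{-1}D_{e_{a}}G)^{2}\big)$.

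The final step applies the first-order formula (\ref{EQDRICS}) with $S = D_{e_{a}}G$, summed over $a$: its $\delta$-part assembles into $\tfrac{1}{4}|\delta(G^{-1}DG)|^{2}$ (using $\sum_{a}|\delta(G^{-1}D_{e_{a}}G)|^{2} = |\delta(G^{-1}DG)|^{2}$), while its remainder is again quadratic in $DG$ and equal to another copy of $\sum_{a}\tr\!\big(\Rc_{G}\,(G^{-1}D_{e_{a}}G)^{2}\big)$. The main work — and the main obstacle — is then bookkeeping: one must verify that the three quadratic-in-$DG$ contributions cancel identically. These are the explicit $-D_{\cdot_{1}}G(*,\cdot)D_{\cdot_{1}}G(*,\cdot)$ term in $(\star)$, the non-compatibility correction from $d^{*}$, and the quadratic remainder from (\ref{EQDRICS}); tracked carefully (matching index contractions and signs, and using the symmetry of $D_{e_{a}}G$), each is a multiple of the single trace $\sum_{a}\tr\!\big(\Rc_{G}\,(G^{-1}D_{e_{a}}G)^{2}\big)$, with respective coefficients $-1$, $+2$, $-1$ that sum to zero. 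Once this cancellation is confirmed, only $-\tfrac{1}{4}|\delta(G^{-1}DG)|^{2}$ survives, proving $(\star)$ and hence the lemma. The delicate points to watch are the sign conventions for $d^{*}$ and for the endomorphism inner products induced by $G$, since it is their interplay that makes the three quadratic terms conspire to vanish.
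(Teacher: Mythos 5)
Your proposal is correct and takes essentially the same route as the paper's proof: the paper likewise expands the codifferential $d^{*}\big(e^{-f}\langle \Ric_{G},DG\rangle\big)$, picks up exactly the non-metricity correction $\langle \Ric_{G}(*,*), D_{\cdot_{1}}G(*,\cdot)D_{\cdot_{1}}G(\cdot,*)\rangle$ you describe (with coefficient $+2$ after the sign of $d^{*}$), and cancels it against the explicit quadratic term and the remainder term of (\ref{EQDRICS}) applied with $S=DG$, leaving only $-\tfrac{1}{4}|\delta(G^{-1}DG)|^{2}$. The only cosmetic difference is that you peel off $e^{-f}$ and the $D_{\nabla f}G$ term at the outset via the Leibniz rule, whereas the paper carries them through a single displayed computation.
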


\begin{proof}[Proof of Lemma \ref{LEMREL1}]
Consider
\begin{align*}
d^{*}&\Big{(}e^{-f} \big<\Ric_{G},DG \big> \Big{)} 
\\= &-e^{-f} \big< D\Ric_{G}, DG \big>
 +e^{-f}\big< \Ric_{G}(*,*), D_{\cdot_{1}}G(*, \cdot)D_{\cdot_{1}}G(\cdot,*) \big>
 \\ & -e^{-f} \big< \Ric_{G},  (D_{\cdot}DG)_{\cdot}- D_{\cdot_{1}}G(*, \cdot)D_{\cdot_{1}}G(\cdot,*) -D_{\nabla f}G \big>.
\end{align*}
(\ref{EQRICGD}) follows by applying (\ref{EQDRICS}) to the first two terms in the above expression.
\end{proof}

\begin{proof}[Proof of Theorem \ref{THMDWL}]
We compute $\frac{\partial}{\partial t}\mathcal{W}_{L,+}(\bar{g}(t),f(t),t)$ along a solution of the gauged Ricci flow equations (\ref{EQGRC1})-(\ref{EQGRC4}). We will be using the $F=0$ condition implicitly.

Setting $u=\frac{e^{-f}}{(4\pi t)^{\frac{n}{2}}}$, we have 
\begin{align}
\label{DWL2}\frac{\partial}{\partial t}\mathcal{W}_{L,+}(\bar{g}(t),f(t),t)
    =&  \int_{M}  \frac{\partial}{\partial t} \Big{(} t(|\nabla f|^{2}+ \R_{g}) -f +n \Big{)}  u \hspace{.2mm} dV_{g}
    \\ \nonumber & -\frac{1}{4}  \int_{M}   \frac{\partial}{\partial t} \Big( t|DG|^{2} \Big{)} u \hspace{.2mm} dV_{g}, 
\end{align}
where we used $\frac{\partial}{\partial t} u \hspace{.2mm} dV_{g}=0$ by Lemma \ref{LEMUC}.

Using standard results and those in \cite{GS1}, we have
\begin{align}
\int_{M}  \frac{\partial}{\partial t} \Big{(} t(|\nabla f|^{2}+ & \R_{g}) -f+n \Big{)} u \hspace{.2mm} dV_{g}
  \\ \nonumber  &=\frac{t}{2}\int_{M} \Big{ <} \frac{\partial g}{\partial t} -\frac{g}{t}, -\frac{g}{t} -2\Ric_{g} -2\nabla^{2}f  \Big{ >}   u \hspace{.2mm} dV_{g}
\end{align}
\begin{align}
 -\frac{1}{4}\int_{M}  &  \frac{\partial}{\partial t} \Big{(}t|DG|^{2} \Big{)} u \hspace{.2mm} dV_{g}
  \\ \nonumber = & \ \frac{t}{2} \int_{M} \Big{< }\frac{\partial G}{\partial t}, (D_{\cdot}DG)_{\cdot} -D_{\cdot_{2}}G(*,\cdot_{1})D_{\cdot_{2}}G(*,\cdot_{1}) -D_{\nabla f}G \Big{>}u \hspace{.2mm} dV_{g}
    \\ \nonumber & -t\int_{M} \Big{<}\frac{\partial A}{\partial t}(*), D_{*}G(\cdot_{1},\cdot_{2})G^{-1}G([\  , \cdot_{1}], \cdot_{2})  \Big{ >}   u \hspace{.2mm} dV_{g}
      \\ \nonumber &+ \frac{t}{4}\int_{M} \Big{ <} \frac{\partial g}{\partial t}, DG(\cdot_{1}, \cdot_{2})DG(\cdot_{1},\cdot_{2}) \Big{ >}   u \hspace{.2mm} dV_{g}
\\ \nonumber & -\frac{1}{4}\int_{M} \big| DG \big|^{2} u \hspace{.2mm} dV_{g}.
\end{align}

Inserting these formulas into (\ref{DWL2}) yields
\begin{align*}
\frac{\partial}{\partial t}\mathcal{W}_{L,+}&(\bar{g}(t),f(t),t) 
         \\=& \ \frac{t}{2} \int_{M} \Big{< }\frac{\partial G}{\partial t}, (D_{\cdot}DG)_{\cdot} -D_{\cdot_{2}}G(*,\cdot_{1})D_{\cdot_{2}}G(*,\cdot_{1}) -D_{\nabla f}G \Big{>}u \hspace{.2mm} dV_{g}
    \\& -t\int_{M} \Big{<}\frac{\partial A}{\partial t}(*), D_{*}G(\cdot_{1},\cdot_{2})G^{-1}G([\  , \cdot_{1}], \cdot_{2})  \Big{ >}   u \hspace{.2mm} dV_{g}
      \\& +\frac{t}{2}\int_{M} \Big{ <} \frac{\partial g}{\partial t} -\frac{g}{t}, -\frac{g}{t} -2\Ric_{g} +\frac{1}{2}DG(\cdot_{1}, \cdot_{2})DG(\cdot_{1},\cdot_{2}) -2\nabla^{2}f  \Big{ >}   u \hspace{.2mm} dV_{g}.
 \end{align*}
Using the gauged Ricci flow equations (\ref{EQGRC1})-(\ref{EQGRC4}), gives 
\begin{align*}
\frac{\partial}{\partial t}\mathcal{W}_{L,+}&(\bar{g}(t),f(t),t)
\\  =&\ \frac{t}{2} \int_{M} \Big|  (D_{\cdot}DG)_{\cdot} -D_{\cdot_{2}}G(*,\cdot_{1})D_{\cdot_{2}}G(*,\cdot_{1}) -D_{\nabla f}G    \Big|^{2} u \hspace{.2mm} dV_{g}
\\& -t \int_{M} \Big< \Ric_{G}, (D_{\cdot}DG)_{\cdot} -D_{\cdot_{2}}G(*,\cdot_{1})D_{\cdot_{2}}G(*,\cdot_{1}) -D_{\nabla f}G    \Big> u \hspace{.2mm} dV_{g}
\\&+\frac{t}{2} \int_{M} \Big|   \frac{g}{t} +2\Ric_{g} -\frac{1}{2}DG(\cdot_{1}, \cdot_{2})DG(\cdot_{1},\cdot_{2}) +2\nabla^{2}f \Big|^{2} u \hspace{.2mm} dV_{g}
\\& +t \int_{M} \Big| DG([\ ,\cdot], \cdot) \Big|^{2} u \hspace{.2mm} dV_{g}.
\end{align*} 

Applying Lemma \ref{LEMREL1} to the second term in the above expression, completes the proof of the theorem. 

\end{proof}

\section{Consequences of $\mathcal{W}_{L,+}$  Constant}
In this section, we investigate the condition that $\mathcal{W}_{L,+}(\bar{g}(t),f(t), t)$ is constant along a Ricci flow. This will be important when analyzing blowdown limits in Section \ref{SECBLW1}.
\begin{prop} \label{PROPWLCON}
Let $\bar{g}(t)$ be an invariant Ricci flow solution defined for $t \in (0, \infty)$ on a nilpotent $\mathcal{G}$-principal bundle $P\rightarrow M$, which is fibered over a compact n-dimensional manifold, and let $f_{t} \in C^{\infty}(M)$ satisfy (\ref{EQRC4}). Suppose the curvature $F_{\bar{A}(t)}=0$. If $\frac{\partial}{\partial t}\mathcal{W}_{L,+}(\bar{g}(t), f(t),t)=0$ then the following holds true  
\begin{enumerate}
 \item[1)]  $(D_{\cdot}DG)_{\cdot}- D_{\cdot}G(*,\cdot_{1})D_{\cdot}G(*,\cdot_{1})=0$
 \item[2)]  $\Ric_{g}  +\frac{g}{2t} -\frac{1}{4}DG(\cdot_{1},\cdot_{2})DG(\cdot_{1},\cdot_{2}) =0 $
\item[3)] $\delta(G^{-1}DG)=0$ and $DG(\cdot,\cdot)=0$
\item[4)] $D\Rc_{G}=0$.
  \end{enumerate}
Moreover, we have
\begin{enumerate}
\item[5)] $\frac{\partial A}{\partial t}=0 \text{ and } \frac{\partial D}{\partial t}=0$
\item[6)] $\frac{\del G}{\del t}=-2\Ric_{G}$
\item[7)]$g(t)= tg|_{t=1}$
\item[8)] $d\R_{G}=0$ and $\frac{d}{d t}\R_{G}= 2|\Ric_{G}|^{2}.$
\end{enumerate}
\end{prop}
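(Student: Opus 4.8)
The proof starts from the formula in Theorem \ref{THMDWL}, which writes $\frac{\partial}{\partial t}\mathcal{W}_{L,+}$ as a sum of four integrals of the form $c\int_M |T|^2\, u\, dV_g$ with $c>0$ and $u>0$. Since $M$ is compact and $u\,dV_g$ is a positive measure, the hypothesis $\frac{\partial}{\partial t}\mathcal{W}_{L,+}=0$ forces each integrand, hence each tensor $T$, to vanish pointwise on $M$. This yields the four equations (A) $(D_\cdot DG)_\cdot - D_\cdot G(*,\cdot_1)D_\cdot G(*,\cdot_1) - D_{\nabla f}G = 0$; (B) $2\Ric_g + \frac{g}{t} - \frac12 DG(\cdot_1,\cdot_2)DG(\cdot_1,\cdot_2) + 2\nabla^2 f = 0$; (C) $\delta(G^{-1}DG)=0$; and (D) $DG([*,\cdot],\cdot)=0$. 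Equation (C) is already the first half of conclusion 3).

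From here most conclusions are read off directly. Conclusion 4), $D\Rc_G=0$, follows from (C) and (\ref{EQDRC2}), since $\langle D\Rc_G,A\rangle=\frac14\langle\delta(G^{-1}DG),\delta(A^*)\rangle=0$ for every $A$. The first half of 8), $d\R_G=0$, follows by combining (\ref{EQDR4}) with Lemma \ref{LEMRCDEL1}: for each $v$, $\langle\Ric_G,D_vG\rangle=\langle\Rc_G,G^{-1}D_vG\rangle=\frac14\langle\delta(G^{-1}D_vG),[,]\rangle=0$ by (C). Next I would substitute $F=0$ together with (A), (B), (D) into the gauged Ricci flow equations (\ref{EQGRC1})--(\ref{EQGRC4}). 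The bundle-curvature equation reduces to $G(\frac{\partial A}{\partial t}v,\eta)=D_vG([\cdot,\eta],\cdot)$, which vanishes by (D), giving 5); the $G$-equation reduces to $\frac{\partial G}{\partial t}=-2\Ric_G$ by (A), giving 6); and the $g$-equation reduces to $\frac{\partial g}{\partial t}=\frac{g}{t}$ by (B), which integrates to $g(t)=tg|_{t=1}$, giving 7). The second half of 8) then follows from (\ref{EQDR}) with $\frac{\partial G}{\partial t}=-2\Ric_G$.

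To complete conclusion 3) I must show that the $1$-form $\phi:=DG(\cdot,\cdot)$ on $M$ vanishes. The plan is to take the fiberwise $G$-trace of (A). Using $\tr_G(D_{e_a}D_{e_a}G)=D_{e_a}(\phi(e_a))+|D_{e_a}G|^2$, the traces of the first two terms of (A) combine to $\divg\phi$ (the two copies of $|DG|^2$ cancel), while $\tr_G(D_{\nabla f}G)=\phi(\nabla f)$; hence $\divg\phi=\phi(\nabla f)$. By Lemma \ref{propGRAD}, $\phi=d\ln h$ for a global function $h$ on $M$, so this reads $\Delta\ln h=\langle\nabla\ln h,\nabla f\rangle$. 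Pairing with $\ln h$ and integrating against $e^{-f}\,dV_g$ over the compact base, the mixed terms cancel and one is left with $\int_M|\nabla\ln h|^2 e^{-f}\,dV_g=0$, whence $\ln h$ is constant and $DG(\cdot,\cdot)=0$.

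The main obstacle is conclusions 1) and 2). Given (A) and (B), these are equivalent respectively to $D_{\nabla f}G=0$ and $\nabla^2 f=0$; and since $M$ is compact and connected, $\nabla^2 f=0$ forces $f$ to be constant, hence $\nabla f=0$, which establishes both at once. Thus everything reduces to showing that the potential $f$ is constant. Two facts should drive this: first, by Lemma \ref{LEMUC} the measure $u\,dV_g$ is time-independent, and combined with $g(t)=tg|_{t=1}$ this already forces $f$ to be time-independent; second, equation (B) exhibits $(g,G,f)$ as an expanding gradient Ricci-harmonic soliton on the compact base. The crux is then the rigidity of such compact expanding solitons, where I would expect a maximum-principle or Bochner-type integration argument — tracing (B) and invoking the contracted Bianchi identity to produce an elliptic equation for $\R_g$ or for $f$ — to force $\nabla^2 f=0$. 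This rigidity step, rather than any of the bundle computations, is where the real work lies.
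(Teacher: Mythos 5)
Your derivation of the four pointwise identities (A)--(D) from the vanishing of the nonnegative integrands, and your treatment of Parts 3), 4) and $d\R_{G}=0$, coincide with the paper's: conclusion 4) via (\ref{EQDRC2}), $d\R_{G}=0$ via (\ref{EQDR4}) and Lemma \ref{LEMRCDEL1}, and $DG(\cdot,\cdot)=0$ via the $G$-trace of (A), Lemma \ref{propGRAD}, and integration against $e^{-f}dV_{g}$. The genuine gap is exactly where you say ``the real work lies'': Parts 1) and 2). Your reduction is sound --- given (A) and (B) these are equivalent to $D_{\nabla f}G=0$ and $\nabla^{2}f=0$, and compactness upgrades $\nabla^{2}f=0$ to $f$ constant --- but you then defer the proof to an unspecified ``maximum-principle or Bochner-type'' rigidity statement for compact expanding gradient solitons coupled to $G$. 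That is not a quotable off-the-shelf fact in this coupled (harmonic-Einstein with potential) setting, and proving it is the substance of the paper's argument. Concretely, the paper computes $\frac{\partial \bar{\R}}{\partial t}$ two ways: once from the reduced equations together with the evolution of $|DG|^{2}$ (giving (\ref{EQDR1})), and once from the total-space identity $\frac{\partial \bar{\R}}{\partial t}=\Delta_{\bar{g}}\bar{\R}+2|\bar{\Ric}|^{2}$, after showing $\Delta_{\bar{g}}\bar{\R}=\Delta_{g}\bar{\R}$ and decomposing $|\bar{\Ric}|^{2}$ (giving (\ref{EQDR2})). Equating them and using $d\R_{G}=0$ produces the identity (\ref{EQMIN1}) for $\R_{g}-\frac{1}{4}|DG|^{2}+\frac{n}{2t}$; the trace of (B) integrates to $\int_{M}\big(\R_{g}-\frac{1}{4}|DG|^{2}+\frac{n}{2t}\big)dV_{g}=0$, and a minimum principle then forces $\R_{g}-\frac{1}{4}|DG|^{2}+\frac{n}{2t}\equiv 0$, whence the two square terms in (\ref{EQMIN1}) vanish, i.e.\ $\Ric_{G}=\bar{\Ric}_{ij}$ and $\bar{\Ric}_{ab}=-\frac{g}{2t}$, which by (\ref{EQRICL1}) are Parts 1) and 2). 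Passing through the total space is what supplies the coupled Bianchi-type identity your sketch presupposes; doing the Bochner argument purely on the base would amount to rederiving it, and you have not done so.

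A secondary problem is ordering and gauge. You obtain Parts 5)--7) by substituting (A), (B), (D) into the gauged equations (\ref{EQGRC1})--(\ref{EQGRC4}), but those govern the pulled-back solution $\psi_{t}^{*}\bar{g}(t)$, while Parts 5)--7) are assertions about $\bar{g}(t)$ itself; a statement such as $g(t)=tg|_{t=1}$ is not gauge-invariant. The gauge vector field is the horizontal lift of $-\frac{1}{2}g^{-1}DG(\cdot,\cdot)-\nabla f$, so the gauged and ungauged systems coincide only once you know both $DG(\cdot,\cdot)=0$ and $\nabla f=0$ --- the latter being precisely the missing rigidity. The correct order (and the paper's) is to establish Parts 1)--4) first and then read 5)--8) off the ungauged equations (\ref{EQRICCI}), where Part 1) together with $DG(\cdot,\cdot)=0$ (not (A)) removes the extra terms. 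This also undercuts your preliminary use of Lemma \ref{LEMUC} together with ``$g(t)=tg|_{t=1}$'' to conclude time-independence of $f$ before the rigidity step.
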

We now specialize to the case when $\mathcal{G}$ is $Nil^{3}$. As above, define $Z(\mathfrak{G})=\cup_{m \in M} Z(\mathfrak{G}|_{m})$, where $Z(\mathfrak{G}|_{m})$ is the center of the Lie algebra $(\mathfrak{G}|_{m}, [,])$. We have
\begin{cor} \label{CORWLCON}
 Given the setup of  Proposition \ref{PROPWLCON}, if the structure group $\mathcal{G}$ is $Nil^{3}$ then the following conditions also hold true: 
\begin{enumerate}
\item[1)] $\mathfrak{G}_{0}:=Z(\mathfrak{G})^{\perp}$ is independent of  $t$
\item[2)] under the splitting  $\mathfrak{G}=\mathfrak{G}_{0} \oplus Z(\mathfrak{G})$,  $ D= D \oplus D$  
\item[3)] $DG|_{Z(\mathfrak{G})}=0$ and  $tr_{\mathfrak{G}_{0}}DG(\cdot,\cdot)=0$. 
\end{enumerate} 
Moreover,  we have 
\begin{align*}
      &  \quad  a) \ \R_{G}= -\frac{1}{6}\Big( \frac{1}{t+C} \Big),\text{ where } C \geq 0 
        \\& \quad b) \ G(\eta,\eta')=\lp{\frac{t+C}{1+C}}^{\frac{1}{3}}G|_{t=1}        
                           (\eta,\eta'), \text{ for all } \eta,\eta' \in \mathfrak{G}_{0}
        \\& \quad c) \ G(\eta,\eta')=\lp{\frac{t+C}{1+C}}^{-\frac{1}{3}}G|_{t=1}
                           (\eta,\eta'), \text{ for all } \eta,\eta' \in Z(\mathfrak{G}).
\end{align*}
\end{cor}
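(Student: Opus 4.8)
The plan is to combine the eight conclusions of Proposition \ref{PROPWLCON} with the special structure of $Nil^{3}$ recorded in (\ref{EQRCNIL})--(\ref{EQRCNIL2}) and in Lemmas \ref{LEMDERNIL} and \ref{LEMDZ}, establishing the numbered conditions in the order 3), 2), 1) and then deducing a), b), c). Throughout I write $A_{v}:=G^{-1}D_{v}G\in\End\mathfrak{G}$, determined by $G(A_{v}\eta_{1},\eta_{2})=D_{v}G(\eta_{1},\eta_{2})$; since $DG$ is symmetric in its $\mathfrak{G}$-slots, each $A_{v}$ is $G$-self-adjoint. I expect condition 3) to be the cleanest step and the cornerstone. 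By Proposition \ref{PROPWLCON} part 3), $\delta(G^{-1}DG)=0$, so each $A_{v}$ is a derivation of the fiber Lie algebra; by Lemma \ref{LEMDERNIL} it preserves $Z(\mathfrak{G})$ and satisfies $\tr A_{v}=2\tr(A_{v}|_{Z(\mathfrak{G})})$. Now $\tr A_{v}=\tr_{G}(D_{v}G)=DG(\cdot,\cdot)(v)$, which vanishes by Proposition \ref{PROPWLCON} part 3). Hence $\tr(A_{v}|_{Z(\mathfrak{G})})=0$, and since $Z(\mathfrak{G})$ has rank one this forces $A_{v}|_{Z(\mathfrak{G})}=0$, i.e. $A_{v}\zeta=0$ for central $\zeta$. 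Then $DG(\zeta,\cdot)=G(A_{v}\zeta,\cdot)=0$, so in particular $DG|_{Z(\mathfrak{G})}=0$, and $\tr_{\mathfrak{G}_{0}}DG=\tr_{G}DG-\tr_{Z(\mathfrak{G})}DG=0$.

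With $A_{v}\zeta=0$ in hand, condition 2) follows quickly. That $D$ preserves $Z(\mathfrak{G})$ is Lemma \ref{LEMDZ}. For $\eta\in\mathfrak{G}_{0}$ and $\zeta\in Z(\mathfrak{G})$, expand $G(D_{v}\eta,\zeta)=v[G(\eta,\zeta)]-D_{v}G(\eta,\zeta)-G(\eta,D_{v}\zeta)$: the first term vanishes as $\eta\perp\zeta$; the middle term equals $G(A_{v}\eta,\zeta)=G(\eta,A_{v}\zeta)=0$ by self-adjointness and $A_{v}\zeta=0$; the last vanishes since $D_{v}\zeta\in Z(\mathfrak{G})\perp\eta$. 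Thus $D_{v}\eta\in\mathfrak{G}_{0}$ and $D=D\oplus D$.

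For condition 1) the key point is that the fiber evolution $\partial_{t}G=-2\Ric_{G}$ (Proposition \ref{PROPWLCON} part 6)) is, at each fixed base point, an ODE that is \emph{algebraic} in $G$, since $\Ric_{G}$ of Definition \ref{DEFRIC2} involves no base derivatives. Fix a central $\zeta$ and a section $\eta_{0}\in\mathfrak{G}_{0}(1)$, and set $\phi(t)=G(t)(\eta_{0},\zeta)$, so $\phi(1)=0$. Using (\ref{EQRIC}) with $\zeta$ central, $\Ric_{G}(\eta_{0},\zeta)=\tfrac14 G([\cdot_{1},\cdot_{2}],\zeta)\,G([\cdot_{1},\cdot_{2}],\eta_{0})$; since $[\cdot_{1},\cdot_{2}]\in Z(\mathfrak{G})$, the factor $G([\cdot_{1},\cdot_{2}],\eta_{0})$, and hence $\Ric_{G}(\eta_{0},\zeta)$, vanishes whenever $\eta_{0}$ is $G(t)$-orthogonal to $Z(\mathfrak{G})$, i.e. whenever $\phi(t)=0$. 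Therefore $\dot\phi=-2\Ric_{G}(\eta_{0},\zeta)$ vanishes on $\{\phi=0\}$, which is an invariant set of the algebraic ODE; uniqueness together with $\phi(1)=0$ forces $\phi\equiv0$. Hence $\mathfrak{G}_{0}(1)\subseteq\mathfrak{G}_{0}(t)$, and by equality of ranks $\mathfrak{G}_{0}(t)=\mathfrak{G}_{0}(1)$ is $t$-independent. I expect this to be the main obstacle: one must argue that $\{\phi=0\}$ is genuinely invariant under the fiberwise ODE, rather than merely checking $\dot\phi(1)=0$.

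Finally, for a) I combine Proposition \ref{PROPWLCON} part 8), $\frac{d}{dt}\R_{G}=2|\Ric_{G}|^{2}$, with (\ref{EQRCNIL2}), $|\Ric_{G}|^{2}=3\R_{G}^{2}$, to obtain the ODE $\frac{d}{dt}\R_{G}=6\R_{G}^{2}$ for the spatially constant (by part 8) scalar $\R_{G}$; integrating gives $\R_{G}=-\tfrac16(t+C)^{-1}$, and since $\R_{G}=-\tfrac14|[\,,\,]|^{2}<0$ must remain finite and negative on all of $(0,\infty)$, necessarily $C\ge0$. For b) and c) I use $\partial_{t}G=-2\Ric_{G}=-2G\,\Rc_{G}$ with $\Rc_{G}=\R_{G}\mathbb{1}\oplus-\R_{G}\mathbb{1}$ from (\ref{EQRCNIL}) and the $t$-independence of $\mathfrak{G}_{0}$ from part 1): for fixed $\eta,\eta'\in\mathfrak{G}_{0}$ this gives $\partial_{t}G(\eta,\eta')=-2\R_{G}\,G(\eta,\eta')=\tfrac{1}{3(t+C)}G(\eta,\eta')$, while for $\zeta,\zeta'\in Z(\mathfrak{G})$ it gives $\partial_{t}G(\zeta,\zeta')=2\R_{G}\,G(\zeta,\zeta')=-\tfrac{1}{3(t+C)}G(\zeta,\zeta')$. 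Integrating each from $t=1$ produces the scaling factors $\big(\tfrac{t+C}{1+C}\big)^{1/3}$ and $\big(\tfrac{t+C}{1+C}\big)^{-1/3}$, as claimed.
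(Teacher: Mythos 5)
Your proposal is correct and follows essentially the same route as the paper: it combines the conclusions of Proposition \ref{PROPWLCON} with Lemma \ref{LEMDERNIL}, Lemma \ref{LEMDZ}, the rank-one center of $\mathfrak{nil}^{3}$, and the block form (\ref{EQRCNIL}) of $\Ric_{G}$, then integrates the resulting ODEs for $\R_{G}$ and $G$ exactly as the paper does. The only difference is presentational: you reorder the parts (3, 2, 1 instead of 1, 2, 3) and spell out the ODE-invariance argument behind the $t$-independence of $Z(\mathfrak{G})^{\perp}$, which the paper asserts in one line from $\partial_{t}G=-2\Ric_{G}$ and the block-diagonal structure of $\Ric_{G}$.
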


We will first prove Proposition \ref{PROPWLCON} and then Corollary \ref{CORWLCON}.

\begin{proof}[Proof of Proposition \ref{PROPWLCON}] By Theorem \ref{THMDWL}, the condition $\frac{ \partial}{\partial t}\mathcal{W}_{L,+}(\bar{g}(t),f(t),t)=0$ is equivalent to 
\begin{align}
 \label{EQL1} &(D_{\cdot}DG)_{\cdot} - D_{\cdot}G(*, \cdot_{1})D_{\cdot}G(*,\cdot_{1})-D_{\nabla f}G=0
\\ \label{EQL2} &Ric_{g}+\frac{g}{2t} -\frac{1}{4}DG(\cdot_{1},\cdot_{2})DG(\cdot_{1},\cdot_{2}) +\nabla^{2}f=0
\\ \label{EQL3} & \delta(G^{-1}DG)=0
 \\  \label{EQL4} & DG([*,\cdot],\cdot)=0.
\end{align}

By (\ref{EQDR4}), Lemma \ref{LEMRCDEL1} and (\ref{EQL3}),  
\begin{align} \label{EQDRQ1}
    d \R_{G}= - \big<\Ric_{G}, DG \big>= -\frac{1}{4} \big< \delta(G^{-1}DG), [,] \big> =0.
\end{align}
Also, combining (\ref{EQL3}) with (\ref{EQDRC2}) gives $D\Rc_{G}=0$.

To prove Part 3), using (\ref{EQL1}), consider
\begin{align*}
 d^{*}(e^{-f}DG(\cdot,\cdot))= -e^{-f}\Big{(}(D_{\cdot}DG)_{\cdot}(\cdot_{1},\cdot_{1}) -|DG|^{2} -D_{\nabla f}G(\cdot,\cdot) \Big{)}=0.
\end{align*}
Setting $DG(\cdot,\cdot)=dh$ for some $h\in C^{\infty}(M)$, where we used Lemma \ref{propGRAD}, we have 
\begin{align*}
0= hd^{*}(e^{-f}dh)= d^{*}(e^{-f}hdh)+e^{-f}|dh|^{2}.
\end{align*}
Integrating over $M$, yields $DG(\cdot,\cdot)=dh=0$.

Next, we prove Parts 1) and 2). We will be using Notation \ref{NOTABARR} regarding $\bar{\R}$ and $\bar{\Ric}$ and will implicitly be using the $F=0$ condition. 

Applying  (\ref{EQL1}) and $DG(\cdot,\cdot)=0$ to Lemma \ref{p:scalar}, we obtain
\begin{align}
\nonumber \bar{\R}&= \R_{g}+ \R_{G} -(D_{\cdot}DG)_{\cdot}(\cdot_{1},\cdot_{1}) -\frac{1}{4}D_{\cdot}G(\cdot_{1}, 
\cdot_{1})D_{\cdot}G(\cdot_{2},\cdot_{2}) +\frac{3}{4}|DG|^{2}
\\ \label{EQR1} &=\R_{g}+\R_{G}- \frac{1}{4}|DG|^{2}.
\end{align}
The goal is to derive the following two expressions for $\frac{\partial \bar{\R}}{\partial t}$ to which we will then apply a  minimum principle argument.
\begin{align}
\label{EQDR1} \frac{\partial \bar{\R}}{\partial t} =-\frac{1}{t}(\R_{g}-\frac{1}{4}|DG|^{2}) +2|\Ric_{G}|^{2} + \big< \nabla f, \nabla \bar{\R} \big>
\end{align}
\begin{align}
 \label{EQDR2} \frac{\partial \bar{R}}{\partial t} 
    =& \ \Delta_{g} \bar{\R} + 2|\Ric_{G}|^{2} - \frac{n}{2t^{2}} -\frac{2}{t}(\R_{g} -\frac{1}{4}|DG|^{2})
 \\& \nonumber + 2|\Ric_{G}- \bar{\Ric}_{ij}|^{2}
     +2|\bar{\Ric}_{ab}+\frac{g}{2t}|^{2}.
\end{align}


To derive (\ref{EQDR1}), using (\ref{EQL1})-(\ref{EQL4}) and $DG(\cdot,\cdot)=0$, the Ricci flow equations (\ref{EQRICCI}) become 
\begin{align}
\nonumber \frac{\partial G}{\partial t}&= -2 \Ric_{G}+ D_{\nabla f}G
\\ \label{EQRICC2} \frac{\partial A}{\partial t}&=0 
\\ \nonumber \frac{\partial g}{\partial t}&= \frac{g}{t} +2 \nabla^{2}f.
\end{align} 
Now consider (see for example \cite{GS1})
\begin{align}
 \nonumber \frac{\partial}{\partial t} |DG|^{2}
  =& -2 \big{<}\frac{\partial G}{\partial t}, (D_{\cdot}DG)_{\cdot}-D_{\cdot}G(*,\cdot_{1})D_{\cdot}G(*,\cdot_{1})\big{>}
  \\ & +4 \big<\frac{\partial A}{\partial t}(*), D_{*}G(\cdot_{1},\cdot_{2})G^{-1}G([\hspace{2mm}, \cdot_{1}],\cdot_{2})\big>    
 \\ \nonumber &-\big{<}\frac{\partial g}{\partial t}, DG(\cdot_{1},\cdot_{2})DG(\cdot_{1},\cdot_{2})   \big{>}
  -2d^{*} \big< DG, \frac{\partial G}{\partial t} \big>. 
\end{align}
 Plugging in (\ref{EQRICC2}), gives 
\begin{align}
 \nonumber \frac{\partial}{\partial t} |DG|^{2}
   &= -2 \big{<}-2\Ric_{G}, (D_{\cdot}DG)_{\cdot}-D_{\cdot}G(*,\cdot_{1})D_{\cdot}G(*,\cdot_{1})\big{>}
 \\ \nonumber & \ \ \ - \big{<}\frac{g}{t}, DG(\cdot_{1},\cdot_{2})DG(\cdot_{1},\cdot_{2})   \big{>}
                                  -2d^{*}\big<DG, -2\Ric_{G} \big> +\big<\nabla f, \nabla |DG|^{2} \big>
 \\ \nonumber & = -|\delta(G^{-1}DG)|^{2} -\frac{1}{t}|DG|^{2} + \big<\nabla f, \nabla |DG|^{2} \big>
\\ \label{EQDG2} &  =  -\frac{1}{t}|DG|^{2} + \big< \nabla f, \nabla |DG|^{2} \big>,
\end{align}
where we used Lemma \ref{LEMREL1} with $f=0$  to obtain the second equality, and (\ref{EQL3}) to obtain the third.

(\ref{EQDR1}) then follows from (\ref{EQR1}),  (\ref{EQRICC2}) and (\ref{EQDG2}).

To derive (\ref{EQDR2}), since $\bar{g}(t)$ is a Ricci flow solution,
\begin{align} \label{EQDR3}
\frac{\partial \bar{R}}{\partial t}&= \Delta_{\bar{g}}\hspace{.4mm} \bar{R}+2|\bar{Ric}|^{2}.  
\end{align}
We now examine the above two terms. We first claim
\begin{align}
\label{EQLAP1} \Delta_{\bar{g}}\hspace{.4mm} \bar{R}= \Delta_{g} \hspace{.4mm} \bar{R}. 
\end{align}
To prove this, choose orthonormal frames $\{\eta_{i}\}$ and $\{v_{a}\}$ for $\mathfrak{G}$ and $TM$ respectively over an open set $U \subset M$. We will be using the  Lie algebroid connection $\nabla$ of Definition  \ref{DEFNAB} and the exterior derivative $d_{\mathcal{E}}$ of Definition \ref{DEFD1}.  We have 
\begin{align*}
\Delta_{\bar{g}}\hspace{.4mm} \bar{\R} &= (\nabla_{\eta_{i}}d_{\mathcal{E}}\bar{\R})(\eta_{i}) +  (\nabla_{v_{a}}d_{\mathcal{E}}\bar{\R})(v_{a}) 
    \\&= -d_{\mathcal{E}}\bar{\R}(\nabla_{\eta_{i}}\eta_{i}) +v_{a}[d\bar{\R}(v_{a})]
             -d_{\mathcal{E}}\bar{\R}(\nabla_{v_{a}}v_{a})
    \\&= -d\bar{\R} \big( -\frac{1}{2}g^{-1}DG(\cdot,\cdot) \big) +v_{a}[d\bar{\R}(v_{a})]
             -d\bar{\R}(\nabla^{g}_{v_{a}}v_{a})
    \\&= \Delta_{g}\bar{\R},
\end{align*}
where we used Lemma \ref{LEMNABDEC} and $DG(\cdot,\cdot)=0$.

For the second term in (\ref{EQDR3}), using Lemma \ref{p:Ricci} and (\ref{EQL1})-(\ref{EQL4}), we have
\begin{align}
&\nonumber \bar{\Ric}_{ij}= \Ric_{G}-\frac{1}{2}(D_{\cdot}DG)_{\cdot} +\frac{1}{2}D_{\cdot}G(*, \cdot_{1})D_{\cdot}G(*,\cdot_{1})
\\ \label{EQRICL1}&\bar{\Ric}_{ia}= 0
\\ \nonumber & \bar{\Ric}_{ab}=\Ric_{g}-\frac{1}{4} DG(\cdot_{1},\cdot_{2})DG(\cdot_{1},\cdot_{2}).
\end{align} 
Then 
\begin{align}
\nonumber 2|\bar{\Ric}|^{2}
    &=2|\bar{\Ric}_{ij}|^{2} +2 |\bar{\Ric}_{ab}|^{2}
    \\ \label{EQRCBAR1} &=   2|\Ric_{G}|^{2} - \frac{n}{2t^{2}} -\frac{2}{t}(\R_{g} -\frac{1}{4}|DG|^{2})
 \\ \nonumber & \ \ \  + 2|\Ric_{G}- \bar{\Ric}_{ij}|^{2}
          +2|\bar{\Ric}_{ab}+\frac{g}{2t}|^{2}.
\end{align}
In the last equality, we used $\big< \Ric_{G}, \Ric_{G}- \bar{\Ric}_{ij} \big>=0$. One way to show this is to apply  (\ref{EQRICL1}), Lemmas \ref{LEMREL1} and \ref{LEMRCDEL1}, and (\ref{EQL3}) to obtain 
\begin{align*}
2\big< \Ric_{G}, \Ric_{G}- \bar{\Ric}_{ij} \big> &= \big< \Ric_{G}, (D_{\cdot}DG)_{\cdot} - D_{\cdot}G(*, \cdot_{1})D_{\cdot}G(*,\cdot_{1}) \big>
\\& = -\frac{1}{4}|\delta(G^{-1}DG)|^{2}- \frac{1}{4} d^{*} \big<\delta(G^{-1}DG), [,] \big> =0.
\end{align*}
Alternatively, one may use (\ref{EQL1}) instead of Lemma \ref{LEMREL1}. Combining (\ref{EQDR3}),  (\ref{EQLAP1}) and (\ref{EQRCBAR1}) proves (\ref{EQDR2}).

If we now equate (\ref{EQDR1}) and (\ref{EQDR2}) and use $d\R_{G}=0$ as in (\ref{EQDRQ1}), we obtain 
\begin{align} 
\nonumber \big< \nabla f, \nabla \big(\R_{g}-\frac{1}{4}|DG|^{2} +&\frac{n}{2t} \big) \big>
\\ \label{EQMIN1}  =& \ \Delta_{g} \big( \R_{g}-\frac{1}{4}|DG|^{2} +\frac{n}{2t} \big) - \frac{1}{t} \big(\R_{g}-\frac{1}{4}|DG|^{2} +\frac{n}{2t} \big)
\\ \nonumber & + 2|\Ric_{G}- \bar{\Ric}_{ij}|^{2}
  +2|\bar{\Ric}_{ab}+\frac{g}{2t}|^{2}.
\end{align}

At the same time, by (\ref{EQL2}), we have 
\begin{align*}
\int_{M} (R_{g}-\frac{1}{4}|DG|^{2} +&\frac{n}{2t}) dV_{g}= -\int_{M} \Delta f dV_{g}=0.
\end{align*}
Hence $\big( R_{g}-\frac{1}{4}|DG|^{2} +\frac{n}{2t} \big) _{min}<0$ or $R_{g}-\frac{1}{4}|DG|^{2} +\frac{n}{2t}=0$. Since the former case would contradict (\ref{EQMIN1}), we have that $R_{g}-\frac{1}{4}|DG|^{2} +\frac{n}{2t}=0$. (\ref{EQMIN1}) then implies that $Ric_{G}- \bar{Ric}_{ij}=0$ and $\bar{Ric}_{ab}+\frac{g}{2t}=0$. Using (\ref{EQRICL1}), completes the proof of Parts 1) and 2) of the proposition. 

The proof of the rest of the proposition is straightforward. 
\end{proof}

\begin{rmk}
The proof of Parts 1) and 2) of Proposition \ref{PROPWLCON} generalizes that of Proposition 4.67 in \cite{LOTT2} to the nonabelian setting.
\end{rmk}
\begin{proof}[Proof of Corollary \ref{CORWLCON}]

Part 1) follows from the fact that, by Proposition \ref{PROPWLCON} and (\ref{EQRCNIL}), $G(t)$ satisfies $\frac{\partial G}{\partial t}= -2\Ric_{G}$ and
\begin{align} \label{EQNILSPLIT2}
\Ric_{G}= \R_{G}G \oplus -\R_{G}G,
\end{align}
based on the splitting $\mathfrak{G}= Z(\mathfrak{G})^{\perp}\oplus Z(\mathfrak{G})$.

For the proof of the rest of the corollary, we will be using the  $\delta(G^{-1}DG)=0$ condition in Proposition \ref{PROPWLCON}. By Lemma \ref{LEMDERNIL}, this is equivalent to 
\begin{align}
 \label{EQDGSPL} DG&=DG \oplus DG
\\ \label{EQDGSPL2} DG(\cdot,\cdot)&= 2tr_{Z(\mathfrak{G})}DG(\cdot,\cdot),
\end{align}
where (\ref{EQDGSPL}) is based on the splitting of $\mathfrak{G}= Z(\mathfrak{G})^{\perp} \oplus Z(\mathfrak{G})$. 

To  prove Part 2), first note by Lemma \ref{LEMDZ}, $D$ preserves sections of $Z(\mathfrak{G})$. Now let $\eta$ and $\eta'$ be sections of $Z(\mathfrak{G})^{\perp}$ and  $Z(\mathfrak{G})$, respectively, and consider 
    \begin{align*}
    0&= D(G(\eta,\eta') ) \\ &= DG(\eta, \eta') +G(D\eta,\eta') +G(\eta,D\eta') \\ & =G(D\eta,\eta'),
    \end{align*}
    where we used $DG(\eta, \eta')=0$, which follows from (\ref{EQDGSPL}), and $G(\eta,D\eta')=0$. Hence $D\eta$ is a section of $Z(\mathfrak{G})^{\perp}$.

To prove Part 3), combining Part 3) of Proposition \ref{PROPWLCON} with (\ref{EQDGSPL2}) gives 
\begin{align}
0=tr_{\mathfrak{G}_{0}}DG(\cdot,\cdot) + tr_{Z(\mathfrak{G})}DG(\cdot,\cdot)= 2tr_{Z(\mathfrak{G})}DG(\cdot,\cdot),
\end{align}
where $\mathfrak{G}_{0}:= Z(\mathfrak{G})^{\perp}$. Hence, $tr_{\mathfrak{G}_{0}}DG(\cdot,\cdot)$ and $tr_{Z(\mathfrak{G})}DG(\cdot,\cdot)$ are both zero. Since $Z(\mathfrak{G})$ has rank one, we also obtain $DG|_{Z(\mathfrak{G})}=0.$

Lastly, Parts a)-c) follow from solving Parts 6) and 8) of Proposition \ref{PROPWLCON} together with (\ref{EQNILSPLIT2}).  
\end{proof}

\section{Blowdown Limits and the $\mathcal{W}_{L,+}$-Functional} \label{SECBLW1}

Using Theorem \ref{THMDWL} and Proposition \ref{PROPWLCON}, we obtain the following results about blowdown limits on nilpotent principal bundles.
\begin{thm}\label{THMBLOW}
Let $\bar{g}(t)$ be an invariant Ricci flow solution defined for  $t \in (0, \infty)$   on a nilpotent $\mathcal{G}$-principal bundle $P\rightarrow M$, over a compact n-dimensional manifold, such that the curvature $F_{\bar{A}(t)}=0$. Define $\bar{g}_{i}(t)=s_{i}^{-1}\bar{g}(s_{i}t)$, where $\{s_{i}\}$ is a sequence of positive numbers such that $\lim s_{i}=\infty$. We suppose there exists a $\mathcal{G}$--principal bundle $P_{\infty} \rightarrow M_{\infty}$, over a compact manifold, together with bundle isomorphisms $\psi_{i}:P_{\infty} \rightarrow P$  such that $\psi_{i}^{*}\bar{g}_{i}(t)$  converges  smoothly and uniformly to $\bar{g}_{\infty}(t)$ on $P_{\infty} \times [j^{-1},j]$, for all $j>0$. The following holds true in the limit: 
\begin{enumerate}
 \item[1)]  $(D_{\cdot}DG)_{\cdot}- D_{\cdot}G(*,\cdot_{1})D_{\cdot}G(*,\cdot_{1})=0$
 \item[2)]  $Ric_{g}  +\frac{g}{2t} -\frac{1}{4}DG(\cdot_{1},\cdot_{2})DG(\cdot_{1},\cdot_{2}) =0 $
\item[3)] $\delta(G^{-1}DG)=0$ and $DG(\cdot,\cdot)=0$
\item[4)] $D\Rc_{G}=0$.
  \end{enumerate}
Moreover, we have
\begin{enumerate}
\item[5)] $\frac{\partial A}{\partial t}=0 \text{ and } \frac{\partial D}{\partial t}=0$
\item[6)] $\frac{\del G}{\del t}=-2Ric_{G}$
\item[7)]$g(t)= tg|_{t=1}$
\item[8)] $d \R_{G}=0$ and $\frac{d}{d t} \R_{G}= 2|\Ric_{G}|^{2}.$
\end{enumerate}
In the case when $\mathcal{G}$ is $Nil^{3}$, the conclusions of Corollary \ref{CORWLCON} also hold true in the limit.
\end{thm}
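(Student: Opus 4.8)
The plan is to reduce this theorem to Proposition \ref{PROPWLCON} by showing that the functional $\mathcal{W}_{L,+}(\bar{g}_\infty(t), f_\infty(t), t)$ is constant in $t$ along the limit flow, for an appropriate solution $f_\infty$ of the conjugate heat equation (\ref{EQRC4}) associated with $\bar{g}_\infty(t)$. This mirrors the argument for nilpotent Lie groups in Proposition \ref{PROPBLOWLIE3}, with the pointwise evaluation of $\mathcal{W}_{+}$ there replaced by integration over the compact base $M_\infty$. Once constancy is established, $\frac{\partial}{\partial t}\mathcal{W}_{L,+}(\bar{g}_\infty(t), f_\infty(t), t)=0$ for every $t$, so Proposition \ref{PROPWLCON} applied to $\bar{g}_\infty(t)$ yields conclusions (1)--(8), and in the $Nil^{3}$ case Corollary \ref{CORWLCON} supplies the remaining conclusions.

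To prove constancy I would combine the scale invariance $\mathcal{W}_{L,+}(c\bar{g},f,c\tau)=\mathcal{W}_{L,+}(\bar{g},f,\tau)$ with the parabolic-rescaling invariance of (\ref{EQRC4}). Writing $\bar{g}_i(t)=s_i^{-1}\bar{g}(s_i t)$ and setting $f_i(t):=f(s_i t)$, which solves the conjugate heat equation for $\bar{g}_i(t)$, scale invariance with $c=s_i^{-1}$ and $\tau=s_i t$ gives
\[
\mathcal{W}_{L,+}(\bar{g}_i(t), f_i(t), t) = \mathcal{W}_{L,+}(\bar{g}(s_i t), f(s_i t), s_i t).
\]
As $i\to\infty$ the right-hand side approaches $\lim_{u\to\infty}\mathcal{W}_{L,+}(\bar{g}(u), f(u), u)$, a limit that exists because $\mathcal{W}_{L,+}(\bar{g}(t),f(t),t)$ is nondecreasing in $t$ by Theorem \ref{THMDWL}; crucially, being the limit of a monotone function, it is the same for every subsequence $u=s_i t$ and hence independent of $t$. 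On the other hand, since $\mathcal{W}_{L,+}$ is defined intrinsically over the base and $M_\infty$ is compact, the smooth uniform convergence $\psi_i^{*}\bar{g}_i(t)\to\bar{g}_\infty(t)$ forces the left-hand side to converge to $\mathcal{W}_{L,+}(\bar{g}_\infty(t), f_\infty(t), t)$. Equating the two limits shows this quantity is constant in $t$, and finiteness of the common limit is automatic from convergence of the left-hand side.

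The main obstacle is the analysis of the potential functions: to identify the left-hand limit above I must produce a limit $f_\infty(t)$ of the pulled-back solutions $\psi_i^{*}f_i(t)$ and check that it solves the conjugate heat equation for $\bar{g}_\infty(t)$. The key device is Lemma \ref{LEMUC}, which shows that the measure $u\,dV_{g}$ with $u=e^{-f}/(4\pi t)^{n/2}$ is preserved in $t$; normalizing $\int_{M} u\,dV_{g}=1$ and using compactness of the base yields uniform control sufficient to extract a smoothly convergent subsequence of $\psi_i^{*}f_i(t)$ on each slab $M_\infty\times[j^{-1},j]$, in the spirit of the asymptotic-potential constructions of Perelman and Lott. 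Stability of (\ref{EQRC4}) under smooth convergence then guarantees that the limit $f_\infty(t)$ solves the limit conjugate heat equation, and smooth convergence of the integrand over the compact base guarantees convergence of the functional values.

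Finally, with $\frac{\partial}{\partial t}\mathcal{W}_{L,+}(\bar{g}_\infty(t), f_\infty(t), t)=0$ in hand, I would invoke Proposition \ref{PROPWLCON} directly to obtain (1)--(8) for the limit flow, and then Corollary \ref{CORWLCON} for the additional $Nil^{3}$ conclusions. No further identities are required at this last stage, since both results are already formulated precisely under the hypothesis that the time derivative of $\mathcal{W}_{L,+}$ vanishes.
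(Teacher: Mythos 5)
Your proposal is correct and follows essentially the same route as the paper: construct a conjugate heat solution $f$ for $\bar{g}(t)$, pull back the rescaled pairs, use scale and diffeomorphism invariance of $\mathcal{W}_{L,+}$ together with the monotonicity from Theorem \ref{THMDWL} to show that $\mathcal{W}_{L,+}(\bar{g}_{\infty}(t),f_{\infty}(t),t)$ equals the ($t$-independent, finite) limit $\lim_{u\to\infty}\mathcal{W}_{L,+}(\bar{g}(u),f(u),u)$, and then invoke Proposition \ref{PROPWLCON} and Corollary \ref{CORWLCON}. The paper likewise defers the construction and subsequential convergence of the potentials to the argument of Proposition 4.79 in \cite{LOTT2}, so your sketch of that step matches the paper's level of detail.
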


\begin{rmk}
In \cite{GS3}, we derived Theorem \ref{THMBLOW} in the case when $\mathcal{G}$ is $Nil^{3}$ and $dim M=1$ by constructing a functional, $\mathcal{I}$, specifically for this setting. 
\end{rmk}

\begin{proof} Consider the setup of the theorem and define $\phi_{i}: M_{\infty} \rightarrow M$ to be the diffeomorphisms induced from the bundle maps $\psi_{i}: P_{\infty} \rightarrow P$. Similar to the proof of Proposition 4.79 in \cite{LOTT2}, one may construct a solution $f_{t} \in C^{\infty}(M)$ to the conjugate heat equation (\ref{EQRC4}) with respect to $\bar{g}(t)$ and for $t \in (0, \infty)$. Moreover, if we set $f_{i}(t)=f(s_{i}t)$ then there is a subsequence of $\phi_{i}^{*}f_{i}$ that will converge smoothly and  uniformly on compact subsets of $(0, \infty)$ to $f_{\infty}(t)$ on $M_{\infty}$. $f_{\infty}(t)$ will  satisfy the conjugate heat equation (\ref{EQRC4}) with respect to $\bar{g}_{\infty}(t)$.

Using the properties of $\mathcal{W}_{L,+}$, we then have 
\begin{align*}
\mathcal{W}_{L,+}(\bar{g}_{\infty}(t),f_{\infty}(t),t)
  		&=\lim_{i\rightarrow \infty}\mathcal{W}_{L,+}(\psi_{i}^{*}\bar{g}_{i}(t),\phi_{i}^{*}f_{i}(t),t)
						\\&=	\lim_{i\rightarrow \infty}\mathcal{W}_{L,+}(\bar{g}_{i}(t), f_{i}(t),t)
	\\&= \lim_{i\rightarrow \infty}\mathcal{W}_{L,+}(s_{i}^{-1}\bar{g}(s_{i}t), f(s_{i}t), t)
	   \\&= \lim_{i\rightarrow \infty}\mathcal{W}_{L,+}(\bar{g}(s_{i}t),f(s_{i}t),s_{i}t)
	       \\& = \lim_{ u\rightarrow \infty}\mathcal{W}_{L,+}(\bar{g}(u), f(u),u).
						\end{align*}
	The existence of the last limit follows from the existence of the second to last limit and the fact that $\mathcal{W}_{L,+}(\bar{g}(t), f(t), t)$ is a nondecreasing function of $t$, which follows from Theorem \ref{THMDWL}.

Hence $\mathcal{W}_{L,+}(\bar{g}_{\infty}(t),f_{\infty}(t),t)$ is constant in $t$. Applying Proposition \ref{PROPWLCON} and Corollary \ref{CORWLCON}, completes the proof of the theorem.
\end{proof}

\section{The $\mathcal{W}_{+}$-Functional for Nilpotent Principal Bundles}\label{SECW3}
In this section, we introduce the $\mathcal{W}_{+}$-functional for Ricci flows on nilpotent principal bundles and determine some of its monotonicity properties. In Section \ref{SECBLOWSOL}, we apply the functional to derive blowdown limit results which strengthen those in Theorem \ref{THMBLOW}. In particular, we demonstrate that the limit is locally an expanding Ricci soliton under certain conditions. In Section \ref{SECBLOWSOL2}, we classify the soliton when the structure group of the principal bundle is $Nil^{3}$ and the base manifold is one dimensional.

\subsection{The $\mathcal{W}_{+}$-Functional}
The following functional is a combination of $\mathcal{W}_{L,+}$ of Definition \ref{DEFWL} and $\mathcal{W}_{+}$ of Definition \ref{DEFWLIE} for Lie groups. We will be using the same notation and definitions of Section \ref{SECRICCBAC}.  

\begin{defn}\label{DEFW2} Let $\bar{g}$ be an invariant metric on a $\mathcal{G}$--principal bundle $P\rightarrow M$, which is fibered over a compact, $n$-dimensional  manifold. Also let  $f \in C^{\infty}(M)$, $a \in \mathbb{R}$ and $\tau \in \mathbb{R}_{>0}$.  Define
\begin{align*}
 &\mathcal{W}_{+}(\bar{g},f, a, \tau) \\ &= \mathcal{W}_{L,+}(\bar{g},f, \tau) -a \int_{M} \Big{(}\tau R_{G}+ \tau^{2}|Ric_{G}|^{2}\Big{)} \frac{e^{-f}}{(4\pi \tau)^{\frac{n}{2}}}dV_{g} 
\\&=\int_{M} \Big{(} \tau (|\nabla f|^{2}+R_{g} -\frac{1}{4} |DG|^{2} -\frac{1}{4} |F|^{2} -a R_{G} - a \tau |Ric_{G}|^{2}) -f+n \Big{)} \frac{e^{-f}}{(4\pi \tau)^{\frac{n}{2}}}dV_{g}. 
\end{align*}
\end{defn}
This functional is scale invariant: $\mathcal{W}_{+}(c\bar{g},f,a, c\tau)=\mathcal{W}_{+}(\bar{g},f,a,\tau)$, for $c>0.$

We have the following monotonicity  result for Ricci flows on $Nil^{3}$-bundles:
\begin{thm} \label{THMDW2} Let $\bar{g}(t)$ be an invariant Ricci flow  solution defined for  $t \in (0, \infty)$     on a $Nil^{3}$-principal bundle $P\rightarrow M$, which is fibered over a compact, $n$-dimensional manifold. Suppose the curvature $F_{\bar{A}(t)}=0$. Also let $f_{t} \in C^{\infty}(M)$ be a solution to (\ref{EQRC4}), $u_{t}:=\frac{e^{-f_{t}}}{(4\pi t)^{\frac{n}{2}}}$ and $a \in \mathbb{R}$. The following holds true:
\begin{align*}
\frac{\del }{\del t}&\WW_{+}(\bar{g}(t),f(t), a, t) 
\\=& \ \frac{t}{2} \int_{M} \big{|} (D_{\cdot}DG)_{\cdot}- D_{\cdot}G(*,\cdot_{1})D_{\cdot}G(*,\cdot_{1}) -D_{\nabla f}G \big{|}^{2} u \hspace{.1mm} dV_{g}
\\ & +  \frac{t}{2} \int_{M} \big{|} 2Ric_{g} +\frac{g}{t} -\frac{1}{2}DG(\cdot_{1},\cdot_{2})DG(\cdot_{1},\cdot_{2}) +2\nabla^{2}f \big{|}^{2} u \hspace{.1mm} dV_{g}
\\& +\frac{t}{4}(1-a) \int_{M} \big{|}\delta(G^{-1}DG)  \big{|}^{2} u \hspace{.1mm} dV_{g}
 + t \int_{M} \big{|}DG([*, \cdot],\cdot)\big{|}^{2} u \hspace{.1mm} dV_{g}
\\ &+6at^{2} \int_{M} \big< \Ric_{G}, D_{\cdot}G\big> \big< \Ric_{G}, D_{\cdot}G\big> u \hspace{.2mm} dV_{g}
+\frac{3}{8}at^{2}\int_{M} |[,]|^{2} |\delta(G^{-1}DG)|^{2} u \hspace{.1mm} dV_{g}
\\& +at^{2} \int_{M} |\delta(\Rc_{G} +\frac{\mathbb{1}}{2t})|^{2} u \hspace{.1mm} dV_{g}. 
\end{align*}
\end{thm}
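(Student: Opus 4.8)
The plan is to split $\WW_+$ along its defining formula $\WW_+ = \WW_{L,+} - a\int_M(\tau\R_G+\tau^2|\Ric_G|^2)\tfrac{e^{-f}}{(4\pi\tau)^{n/2}}dV_g$ and differentiate each piece at $\tau=t$. For the Lott piece, Theorem \ref{THMDWL} already delivers $\frac{\partial}{\partial t}\WW_{L,+}(\bar g(t),f(t),t)$ as the first, second and fourth integrals of the claim, together with a $\delta(G^{-1}DG)$-term of coefficient $\frac{t}{4}$. So everything reduces to evaluating
\[
-a\,\frac{\partial}{\partial t}\int_M\big(t\R_G + t^2|\Ric_G|^2\big)\,u\,dV_g ,
\]
and showing it produces the $-\frac{ta}{4}|\delta(G^{-1}DG)|^2$ correction (upgrading the coefficient to $\frac{t}{4}(1-a)$) plus the three new integrals. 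Since we work along the gauged flow (\ref{EQGRC1})--(\ref{EQGRC4}), Lemma \ref{LEMUC} gives $\frac{\partial}{\partial t}(u\,dV_g)=0$, so the derivative passes through the measure and only the pointwise quantity $\frac{\partial}{\partial t}(t\R_G + t^2|\Ric_G|^2)$ is needed. Note $a$ enters linearly, so it suffices to establish the identity once, for every $a\in\mathbb{R}$.

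First I would compute this pointwise quantity fiberwise, treating $G(t)|_m$ as a time-dependent metric on the fixed fiber Lie algebra. Writing $\frac{\partial G}{\partial t}=-2\Ric_G+\Phi$ with $\Phi := (D_{\cdot}DG)_{\cdot} - D_{\cdot}G(*,\cdot_{1})D_{\cdot}G(*,\cdot_{1}) - D_{\nabla f}G$ the spatial part of (\ref{EQGRC1}) under $F=0$, equation (\ref{EQDR}) gives $\frac{\partial}{\partial t}\R_G = 2|\Ric_G|^2 - \langle\Ric_G,\Phi\rangle$. Since on a $Nil^{3}$ fiber $|\Ric_G|^2=3\R_G^2$ by (\ref{EQRCNIL2}), differentiating this relation reduces $\frac{\partial}{\partial t}|\Ric_G|^2$ to $6\R_G\,\frac{\partial}{\partial t}\R_G$, so both derivatives are expressed through $\R_G$ and $\langle\Ric_G,\Phi\rangle$ alone. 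Assembling with the product rule, the terms free of $\Phi$ collapse to $\R_G(1+6t\R_G)^2$ and the cross terms to $-t(1+6t\R_G)\langle\Ric_G,\Phi\rangle$.

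The two halves are then matched to their targets. For the local part, the $Nil^3$ identity $\delta(\Rc_G)=-3\R_G[,]$ --- which follows from Lemma \ref{LEMNILDEL} together with $\delta(\mathbb{1})=-[,]$ --- yields $\delta(\Rc_G+\frac{\mathbb{1}}{2t})=-(3\R_G+\frac{1}{2t})[,]$, and with $|[,]|^2=-4\R_G$ one checks $\R_G(1+6t\R_G)^2=-t^2|\delta(\Rc_G+\frac{\mathbb{1}}{2t})|^2$; hence $-a$ times its integral is exactly $at^2\int_M|\delta(\Rc_G+\frac{\mathbb{1}}{2t})|^2\,u\,dV_g$. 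For the cross part I would invoke Lemma \ref{LEMREL1} to write $e^{-f}\langle\Ric_G,\Phi\rangle = -\tfrac14 e^{-f}|\delta(G^{-1}DG)|^2 - d^*\big(e^{-f}\langle\Ric_G,DG\rangle\big)$. Integrating the codifferential term by parts against the space-dependent weight $(1+6t\R_G)$ throws the derivative onto $6t\,d\R_G$, and using $d\R_G = -\langle\Ric_G,DG\rangle$ from (\ref{EQDR4}) closes this pairing into the perfect square $6at^2\int_M\langle\Ric_G,D_{\cdot}G\rangle\langle\Ric_G,D_{\cdot}G\rangle\,u\,dV_g$. The residual $-\tfrac14(1+6t\R_G)|\delta(G^{-1}DG)|^2$, again via $\R_G=-\tfrac14|[,]|^2$, separates into the $-\frac{ta}{4}|\delta(G^{-1}DG)|^2$ piece (which merges with the Lott contribution to give $\frac{t}{4}(1-a)$) and $\frac38 at^2\int_M|[,]|^2|\delta(G^{-1}DG)|^2\,u\,dV_g$, completing the list.

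The main obstacle I anticipate is the cross-term bookkeeping of the previous paragraph: carrying the space-dependent weight $(1+6t\R_G)$ correctly through the integration by parts, and recognizing that the derivative of this weight closes against $d\R_G=-\langle\Ric_G,DG\rangle$ to form the square $\langle\Ric_G,D_{\cdot}G\rangle\langle\Ric_G,D_{\cdot}G\rangle$ rather than leaving an uncontrolled remainder. It is precisely the $Nil^3$-specific identities $\delta(\Rc_G)=-3\R_G[,]$ and $|\Ric_G|^2=3\R_G^2$ that force the otherwise intractable $\delta$-expressions to collapse; their use --- rather than generic nilpotency --- is what confines this monotonicity formula to the Heisenberg structure group.
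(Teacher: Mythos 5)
Your proposal is correct and follows essentially the same route as the paper: the paper's proof likewise splits off the Lott piece via Theorem \ref{THMDWL}, differentiates the added $\int_M (t\R_G + t^2|\Ric_{G}|^{2})\, u\, dV_g$ term along the gauged flow using Lemma \ref{LEMUC}, (\ref{EQDR}) and the $Nil^{3}$ identity $|\Ric_G|^2=3\R_G^2$, and performs exactly your weighted integration by parts closing $d\R_G=-\langle \Ric_G, DG\rangle$ into the $\langle \Ric_G, D_{\cdot}G\rangle\langle \Ric_G, D_{\cdot}G\rangle$ square (its Lemmas \ref{LEMT1} and \ref{LEMT2}). The only cosmetic difference is bookkeeping: the paper treats $t\R_G$ and $t^2|\Ric_G|^2$ in two separate lemmas and recombines at the end via $-t^2|\delta(\Rc_G)|^2+4t|\Ric_G|^2+\R_G=-t^2|\delta(\Rc_G+\frac{\mathbb{1}}{2t})|^2$, whereas you carry the combined weight $(1+6t\R_G)$ throughout and use $\delta(\Rc_G)=-3\R_G[,]$ directly.
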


\begin{cor}\label{CORWM2}
In the above setup, $\WW_{+}(\bar{g}(t), f(t), a, t)$ is monotone for $0 \leq a\leq 1$. 
\end{cor}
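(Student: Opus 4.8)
The plan is to deduce Corollary \ref{CORWM2} directly from the derivative formula in Theorem \ref{THMDW2}. Since ``monotone'' here means nondecreasing, it suffices to show that $\frac{\partial}{\partial t}\WW_{+}(\bar{g}(t),f(t),a,t)\geq 0$ whenever $0\leq a\leq 1$. The formula in Theorem \ref{THMDW2} writes this derivative as a sum of seven integrals against the positive measure $u\,dV_{g}$ (recall $u=e^{-f}/(4\pi t)^{n/2}>0$ and $t>0$), so the whole argument reduces to a term-by-term sign analysis, keeping careful track of the coefficient of $a$ in each term.

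First I would dispose of the terms whose sign is independent of $a$: the first two integrals carry a factor $\frac{t}{2}$ times a genuine square $|\cdots|^{2}$, and the fourth integral is $t\int_{M}|DG([*,\cdot],\cdot)|^{2}u\,dV_{g}$, so all three are manifestly nonnegative for every $a$. The remaining four integrals are the ones that constrain $a$. The term $\frac{t}{4}(1-a)\int_{M}|\delta(G^{-1}DG)|^{2}u\,dV_{g}$ is nonnegative precisely when $a\leq 1$. The last three integrals each carry a factor $a$ in front of a nonnegative integrand: the term $at^{2}\int_{M}|\delta(\Rc_{G}+\frac{\mathbb{1}}{2t})|^{2}u\,dV_{g}$ and the term $\frac{3}{8}at^{2}\int_{M}|[,]|^{2}\,|\delta(G^{-1}DG)|^{2}u\,dV_{g}$ are $a$ times products of nonnegative quantities, while the term $6at^{2}\int_{M}\langle\Ric_{G},D_{\cdot}G\rangle\langle\Ric_{G},D_{\cdot}G\rangle\,u\,dV_{g}$ is $a$ times a sum of squares once the traced base index is expanded, namely $\sum_{k}\langle\Ric_{G},D_{v_{k}}G\rangle^{2}$ in a local $g$-orthonormal frame $\{v_{k}\}$ of $TM$. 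Hence these three integrals are nonnegative precisely when $a\geq 0$. The one subtlety worth checking explicitly is this last term, since its integrand is a sum of squares rather than a single square; verifying its nonnegativity is what pins down the lower endpoint of the admissible interval.

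Combining the two constraints, $a\geq 0$ from the curvature-correction terms and $a\leq 1$ from the $\delta(G^{-1}DG)$ term, shows that for $0\leq a\leq 1$ every term in Theorem \ref{THMDW2} is nonnegative, so $\frac{\partial}{\partial t}\WW_{+}\geq 0$ and $\WW_{+}(\bar{g}(t),f(t),a,t)$ is nondecreasing. There is essentially no obstacle in the corollary itself; the genuine difficulty lies entirely in establishing the derivative formula of Theorem \ref{THMDW2}. If I were proving that formula, I would begin from the $\WW_{L,+}$ computation already carried out in Theorem \ref{THMDWL}, add the time derivative of the correction $-a\int_{M}(tR_{G}+t^{2}|\Ric_{G}|^{2})u\,dV_{g}$ using the $\delta$-map identities of Lemma \ref{LEMDRCA} applied fiberwise (as in (\ref{EQDRICS})--(\ref{EQDR4})) together with the gauged flow equations (\ref{EQGRC1})--(\ref{EQGRC4}), and then complete the square exactly as in Proposition \ref{PROPDW}. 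The hard part there is the bookkeeping needed to recombine the cross terms into the clean expression $|\delta(\Rc_{G}+\frac{\mathbb{1}}{2t})|^{2}$ and to carry out the $Nil^{3}$-specific simplifications via (\ref{EQRCNIL})--(\ref{EQRCNIL2}); given that formula, the corollary is immediate.
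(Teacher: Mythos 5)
Your proposal is correct and matches the paper's (implicit) argument exactly: the corollary follows from Theorem \ref{THMDW2} by observing that for $0\leq a\leq 1$ every term in the derivative formula is nonnegative, since each is either a manifest square, carries the factor $(1-a)\geq 0$, or carries the factor $a\geq 0$ times a nonnegative integrand. Your attention to the traced term $6at^{2}\int_{M}\langle\Ric_{G},D_{\cdot}G\rangle\langle\Ric_{G},D_{\cdot}G\rangle\,u\,dV_{g}$, which by the paper's trace convention (Notation \ref{NOTADOT}) is $a$ times a sum of squares over a $g$-orthonormal frame, is exactly the right point to verify.
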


\begin{rmk}
The conclusion of Theorem \ref{THMDW2} is also true when the structure group of $P$ is abelian since $\WW_{+}$ would  reduce to Lott's functional, $\WW_{L,+}$ \cite{LOTT2}. (See also Theorem \ref{THMDWL} and Remark \ref{RMK64}.)
\end{rmk}

The proof of Theorem \ref{THMDW2} will be based on the following two lemmas.

\begin{lemma} \label{LEMT1} Let $\bar{g}(t)$ be an invariant Ricci flow solution    defined for  $t \in (0, \infty)$       on a nilpotent $\mathcal{G}$-principal bundle $P\rightarrow M$, which is fibered over a compact, $n$-dimensional manifold. Suppose the curvature $F_{\bar{A}(t)}=0$. Also let $f_{t} \in  C^{\infty}(M)$ be a solution to (\ref{EQRC4}) and $u_{t}:=\frac{e^{-f_{t}}}{(4\pi t)^{\frac{n}{2}}}$. The following holds true: 
\begin{align*}
\frac{\partial}{\partial t} \int_{M} t \R_{G} u \hspace{.2mm} dV_{g}
&=  \int_{M} \Big( \frac{t}{4} \big| \delta(G^{-1}DG) \big|^{2}        %
  +2t | \Ric_{G}   |^{2} 
+ \R_{G} \Big) u \hspace{.2mm} dV_{g}.
\end{align*}
\end{lemma}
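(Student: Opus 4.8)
The plan is to exploit the fact that, along the gauged Ricci flow equations (\ref{EQGRC1})--(\ref{EQGRC4}), the weighted measure $u\,dV_g$ is preserved, so that the entire time dependence of $\int_M t\R_G\,u\,dV_g$ is carried by the integrand $t\R_G$. By Lemma \ref{LEMUC},
\begin{equation*}
\frac{\del}{\del t}\int_M t\R_G\,u\,dV_g = \int_M \Big(\R_G + t\,\frac{\del \R_G}{\del t}\Big)\,u\,dV_g,
\end{equation*}
so the problem reduces to computing $\frac{\del}{\del t}\R_G$ and integrating. The term $\int_M \R_G\,u\,dV_g$ is already one of the three pieces of the claimed answer.

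Next I would compute $\frac{\del}{\del t}\R_G$ fiberwise. Since $\R_G$ depends only on the fiberwise metric $G$ and the time-independent fiberwise Lie bracket $[,]$ --- and not on the connection $D$ --- each fiber $(\mathfrak{G}|_m,[,],G(t))$ is a nilpotent Lie algebra carrying a time-dependent metric, so formula (\ref{EQDR}) of Lemma \ref{LEMDRCA} applies pointwise on $M$, giving $\frac{\del}{\del t}\R_G = -\langle \Ric_G, \frac{\del G}{\del t}\rangle$. I would then substitute the gauged evolution equation (\ref{EQGRC1}) for $\frac{\del G}{\del t}$, invoking the hypothesis $F_{\bar{A}(t)}=0$ to discard the curvature term, so that $\frac{\del G}{\del t} = -2\Ric_G + (D_\cdot DG)_\cdot - D_\cdot G(*,\cdot_1)D_\cdot G(*,\cdot_1) - D_{\nabla f}G$. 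Pairing against $-\Ric_G$ yields
\begin{equation*}
\frac{\del}{\del t}\R_G = 2|\Ric_G|^2 - \big\langle \Ric_G,\ (D_\cdot DG)_\cdot - D_\cdot G(*,\cdot_1)D_\cdot G(*,\cdot_1) - D_{\nabla f}G \big\rangle,
\end{equation*}
the first term producing the $2t|\Ric_G|^2$ contribution after multiplication by $t$.

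The crux is handling the second term after weighting by $t\,u = t\,e^{-f}/(4\pi t)^{n/2}$ and integrating. Here I would apply Lemma \ref{LEMREL1}, which rewrites precisely this $e^{-f}$-weighted pairing as $-\tfrac14 e^{-f}|\delta(G^{-1}DG)|^2 - d^*\!\big(e^{-f}\langle\Ric_G, DG\rangle\big)$. Because $M$ is closed, the exact divergence integrates to zero, leaving the contribution $+\tfrac{t}{4}\int_M |\delta(G^{-1}DG)|^2\,u\,dV_g$. Collecting this with $\int_M \R_G\,u\,dV_g$ and $\int_M 2t|\Ric_G|^2\,u\,dV_g$ then reproduces the stated identity exactly.

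The main obstacle is bookkeeping rather than depth. One must verify that formula (\ref{EQDR}), established for a metric on a fixed nilpotent Lie algebra, transfers verbatim to the fiberwise setting --- legitimate precisely because $\R_G$ is connection-independent --- and then reconcile the trace conventions of (\ref{EQGRC1}) and Lemma \ref{LEMREL1} so that the expression $(D_\cdot DG)_\cdot - D_\cdot G(*,\cdot_1)D_\cdot G(*,\cdot_1) - D_{\nabla f}G$ governing the evolution of $G$ is literally the argument controlled by Lemma \ref{LEMREL1}. The two structural inputs that force the clean cancellation are the compactness of $M$ (to discard the exact divergence) and the hypothesis $F_{\bar{A}(t)}=0$ (to simplify $\frac{\del G}{\del t}$); without them the divergence term would survive and the curvature term would contaminate the pairing.
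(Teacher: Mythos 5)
Your proposal is correct and follows essentially the same route as the paper's proof: differentiate along the gauged flow using Lemma \ref{LEMUC}, compute $\frac{\partial}{\partial t}\R_{G}$ via (\ref{EQDR}) and the gauged equation (\ref{EQGRC1}) with $F_{\bar{A}(t)}=0$, and convert the resulting pairing $\big<\Ric_{G},(D_{\cdot}DG)_{\cdot}-D_{\cdot}G(*,\cdot_{1})D_{\cdot}G(*,\cdot_{1})-D_{\nabla f}G\big>$ into $\frac{1}{4}|\delta(G^{-1}DG)|^{2}$ by Lemma \ref{LEMREL1} together with compactness of $M$. Your additional remarks on the fiberwise validity of (\ref{EQDR}) and the matching of trace conventions are sound and only make explicit what the paper leaves implicit.
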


\begin{proof} In the gauged Ricci flow equations (\ref{EQGRC1})-(\ref{EQGRC4}), we have  
\begin{align*}
 \frac{\partial}{\partial t}& \int_{M}  t \R_{G} \hspace{.6mm} u \hspace{.2mm} dV_{g}
\\&=\int_{M} \Big{(} \R_{G} -t \big< \Ric_{G}, \frac{\partial G}{\partial t} \big> \Big) u \hspace{.2mm} dV_{g}
\\&= -t\int_{M} \big<\Ric_{G},  (D_{\cdot}DG)_{\cdot} -D_{\cdot}G(*,\cdot_{1})D_{\cdot}G(*,\cdot_{1}) -  
        D_{\nabla f}G \big> u \hspace{.2mm} dV_{g}
 \\& \quad +2t \int_{M} \big|\Ric_{G}\big|^{2} u \hspace{.2mm} dV_{g}
 + \int_{M} \R_{G} \hspace{.6mm} u \hspace{.2mm} dV_{g},
\end{align*}
where we used (\ref{EQDR}) and $\frac{\partial}{ \partial t}u \hspace{.2mm} dV_{g}=0$.

The lemma then follows by applying Lemma \ref{LEMREL1} to the above expression. 
\end{proof}

\begin{lemma} \label{LEMT2} Given the setup of Theorem \ref{THMDW2}, we have 
\begin{align*}
&\frac{\partial}{\partial t} \int_{M}  t^{2} \big| Ric_{G}\big|^{2} u \hspace{.2mm} dV_{g}
\\ &=-6t^{2} \int_{M} \big< \Ric_{G}, D_{\cdot}G\big> \big< \Ric_{G}, D_{\cdot}G\big> u \hspace{.2mm} dV_{g}
  -\frac{3}{8}t^{2} \int_{M} \big| [,] \big|^{2} \big| \delta(G^{-1}DG) \big|^{2} u \hspace{.2mm} dV_{g}
\\& \quad -t^{2} \int_{M} \big|  \delta(\Rc_{G})   \big|^{2} u \hspace{.2mm} dV_{g}
 +2t \int_{M} \big| Ric_{G}\big|^{2}u \hspace{.2mm} dV_{g}.
\end{align*}
\end{lemma}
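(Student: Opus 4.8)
The plan is to differentiate under the integral sign, reduce the time derivative of $|\Ric_G|^2$ to a fiberwise computation, exploit the $Nil^3$ structure to rewrite $\delta(\Rc_G)$, and finish with an integration by parts supplied by Lemma \ref{LEMREL1}. First I would invoke Lemma \ref{LEMUC} ($\frac{\partial}{\partial t}(u\,dV_g)=0$) to write
\[
\frac{\partial}{\partial t}\int_M t^2|\Ric_G|^2 u\,dV_g = \int_M\Big(2t|\Ric_G|^2 + t^2\frac{\partial}{\partial t}|\Ric_G|^2\Big)u\,dV_g,
\]
which already accounts for the last target term $2t\int_M|\Ric_G|^2 u\,dV_g$. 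Since $|\Ric_G|^2$ is a fiberwise quantity, its time derivative at each point of $M$ sees only $\frac{\partial G}{\partial t}$, so I can apply the fiberwise analogue of (\ref{EQDRIC3}), namely $\frac{\partial}{\partial t}|\Ric_G|^2 = \frac12\langle\delta(G^{-1}\frac{\partial G}{\partial t}),\delta(\Rc_G)\rangle$. Substituting the gauged Ricci flow equation (\ref{EQGRC1}) with $F=0$, I write $\frac{\partial G}{\partial t}=-2\Ric_G+T$, where $T:=(D_\cdot DG)_\cdot - D_\cdot G(*,\cdot_1)D_\cdot G(*,\cdot_1) - D_{\nabla f}G$. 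By linearity of $\delta$, the $-2\Ric_G$ piece produces $-|\delta(\Rc_G)|^2$ (the third target term), leaving the cross term $\frac12\langle\delta(G^{-1}T),\delta(\Rc_G)\rangle$ to handle.

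The key simplification is the $Nil^3$ structure. By Lemma \ref{LEMNILDEL}, $\Rc_G-3\R_G\mathbb{1}$ is a derivation, and since $\delta(\mathbb{1})=-[,]$ one gets $\delta(\Rc_G)=3\R_G\,\delta(\mathbb{1})=-3\R_G[,]$, the scalar $\R_G$ pulling out of $\delta$ fiberwise. Combining this with Lemma \ref{LEMRCDEL1} gives
\[
\tfrac12\langle\delta(G^{-1}T),\delta(\Rc_G)\rangle = -\tfrac32\R_G\langle\delta(G^{-1}T),[,]\rangle = -6\R_G\langle\Ric_G,T\rangle,
\]
so it remains to integrate $-6t^2\R_G\langle\Ric_G,T\rangle$ against $u\,dV_g$.

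For this I would apply Lemma \ref{LEMREL1}, whose argument is exactly the combination $T$, giving $e^{-f}\langle\Ric_G,T\rangle = -\tfrac14 e^{-f}|\delta(G^{-1}DG)|^2 - d^*\big(e^{-f}\langle\Ric_G,DG\rangle\big)$. Factoring the spatially constant $t^2(4\pi t)^{-n/2}$ out of the integral, the first term combines with $\R_G=-\tfrac14|[,]|^2$ to yield $-\tfrac38 t^2\int_M|[,]|^2|\delta(G^{-1}DG)|^2 u\,dV_g$ (the second target term). For the $d^*$ term I integrate by parts over the closed manifold $M$ to move $d^*$ onto $\R_G$ and then use (\ref{EQDR4}), $d\R_G=-\langle\Ric_G,DG\rangle$, producing $-6t^2\int_M\langle\Ric_G,D_\cdot G\rangle\langle\Ric_G,D_\cdot G\rangle u\,dV_g$ (the first target term). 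Summing the four contributions gives the claimed identity.

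The main obstacle is the cross term. One must recognize that for the $F=0$, $Nil^3$ gauged flow the combination $\frac{\partial G}{\partial t}+2\Ric_G$ is precisely the tensor $T$ appearing in Lemma \ref{LEMREL1}, and that the extra scalar factor $\R_G=-\tfrac14|[,]|^2$ does not commute with $d^*$, forcing a \emph{second} integration by parts in which (\ref{EQDR4}) is exactly the identity needed. The fiberwise use of (\ref{EQDRIC3}) and the $Nil^3$-specific identity $\delta(\Rc_G)=-3\R_G[,]$ are the remaining delicate points; everything else is routine.
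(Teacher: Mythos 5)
Your proposal is correct, and it reaches the same two key identities as the paper (the cross term $-6t^{2}\R_{G}\langle \Ric_{G},T\rangle$ and its reduction via integration by parts and (\ref{EQDR4})), but it is organized genuinely differently. The paper immediately converts $|\Ric_{G}|^{2}=3\R_{G}^{2}$ via (\ref{EQRCNIL2}) and differentiates using the simple formula (\ref{EQDR}) for $\tfrac{d}{dt}\R_{G}$; the $Nil^{3}$ structure then enters at the \emph{end}, through the identity $|\delta(\Rc_{G})|^{2}=-12\R_{G}|\Ric_{G}|^{2}$ (from Lemma \ref{LEMNILDEL} and (\ref{EQRCNIL2})), to turn the leftover $12t^{2}\R_{G}|\Ric_{G}|^{2}$ term into $-t^{2}|\delta(\Rc_{G})|^{2}$. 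You instead differentiate $|\Ric_{G}|^{2}$ directly with the fiberwise version of (\ref{EQDRIC3}), so that after substituting $\tfrac{\partial G}{\partial t}=-2\Ric_{G}+T$ the term $-|\delta(\Rc_{G})|^{2}$ falls out by linearity of $\delta$, and the $Nil^{3}$ structure enters in the \emph{cross term}, via $\delta(\Rc_{G})=-3\R_{G}[,]$ together with Lemma \ref{LEMRCDEL1}; these are equivalent uses of Lemma \ref{LEMNILDEL}, just placed differently. The second difference is in handling $-6t^{2}\R_{G}\langle\Ric_{G},T\rangle$: the paper re-derives the weighted integration by parts inline, differentiating $D(\R_{G}\Ric_{G})$ and invoking (\ref{EQDR4}) and (\ref{EQDRICS}), whereas you reuse Lemma \ref{LEMREL1} as a black box and then do a second integration by parts to move $d^{*}$ onto the non-constant factor $\R_{G}$, again via (\ref{EQDR4}). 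Your version is slightly more economical (it avoids re-proving Lemma \ref{LEMREL1} with a weight, and avoids the separate identity $|\delta(\Rc_{G})|^{2}=-12\R_{G}|\Ric_{G}|^{2}$), while the paper's version keeps the whole computation scalar in $\R_{G}$ from the start; both are complete and rely on the same underlying ingredients (Lemma \ref{LEMUC}, the gauged equation (\ref{EQGRC1}) with $F=0$, Lemma \ref{LEMNILDEL}, and (\ref{EQDR4})).
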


\begin{proof}
Since, by (\ref{EQRCNIL2}), $\big| \Ric_{G}\big|^{2}= 3\R_{G}^{2}$ for $Nil^{3}$, we consider 
\begin{align*}
\frac{\partial}{\partial t} \int_{M} 3t^{2} \R_{G}^{2}u \hspace{.2mm} dV_{g}
= -\int_{M} 6t^{2} R_{G}  \big< Ric_{G}, \frac{\partial G}{\partial t} \big> u \hspace{.2mm} 
        dV_{g}
+\int_{M} 2t \big| \Ric_{G} \big|^{2}  u \hspace{.2mm}dV_{g},
\end{align*}
where we used (\ref{EQDR}) and $\frac{\partial}{\partial t}u \hspace{.2mm}dV_{g}=0.$

Applying (\ref{EQGRC1})-(\ref{EQGRC4}), we obtain
\begin{align}
 \nonumber  \  \frac{\partial}{\partial t} \int_{M} t^{2} |\Ric_{G}|^{2} u \hspace{.2mm} dV_{g}&
  \\ \label{EQT1} = -\int_{M} 6t^{2} &\Big< \R_{G} \Ric_{G}, (D_{\cdot}DG)_{\cdot} -D_{\cdot}G(*,\cdot_{1})D_{\cdot}
                   G(*,\cdot_{1}) - D_{\nabla f}G   \Big>u \hspace{.2mm}dV_{g}
 \\ \label{EQT3}+\int_{M} 12t^{2} &\R_{G}|\Ric_{G}\big|^{2}u \hspace{.2mm}dV_{g}
+\int_{M} 2t \big| \Ric_{G} \big|^{2}  u \hspace{.2mm}dV_{g}.
\end{align}

To analyze (\ref{EQT1}), consider
%
\begin{align*}
e^{-f}& \Big< \R_{G} \Ric_{G},  (D_{\cdot}DG)_{\cdot} -D_{\cdot}G(*,\cdot_{1})D_{\cdot}
                   G(*,\cdot_{1}) - D_{\nabla f}G   \Big>
\\&= -e^{-f} \big< D\big( \R_{G} \Ric_{G} \big), DG \big>
 +e^{-f} \R_{G} \Ric_{G}(\cdot_{1},\cdot_{2})D_{\cdot}G(\cdot_{1},\cdot_{3})D_{\cdot}G(\cdot_{3},
         \cdot_{2})
\\& \quad -d^{*}\Big( e^{-f} \R_{G} \big< \Ric_{G}, DG \big>  \Big)
\\&= -e^{-f} \big( D_{\cdot} \R_{G} \big) \big< \Ric_{G}, D_{\cdot}G \big>
-e^{-f} \R_{G} \big< D \Ric_{G}, DG \big> 
 \\& \quad+e^{-f} \R_{G} \Ric_{G}(\cdot_{1},\cdot_{2})D_{\cdot}G(\cdot_{1},\cdot_{3})D_{\cdot}G(\cdot_{3},
         \cdot_{2})
-d^{*}\Big( e^{-f} \R_{G} \big< \Ric_{G}, DG \big>  \Big).
\end{align*}
Using  (\ref{EQDR4}) and (\ref{EQDRICS}), yields
\begin{align}
\nonumber e^{-f}\Big< &\R_{G} \Ric_{G},  (D_{\cdot}DG)_{\cdot} -D_{\cdot}G(*,\cdot_{1})D_{\cdot}
                   G(*,\cdot_{1}) - D_{\nabla f}G   \Big>
 \\&=e^{-f} \big< \Ric_{G},D_{\cdot}G \big> \big< \Ric_{G},D_{\cdot}G \big>
-\frac{1}{4}e^{-f} \R_{G} \big| \delta(G^{-1}DG)   \big|^{2}
\\ \nonumber & \ \ \ -d^{*}\Big( e^{-f} \R_{G} \big< \Ric_{G}, DG \big>  \Big).
\end{align}
As for (\ref{EQT3}), it follows from Lemma \ref{LEMNILDEL} and (\ref{EQRCNIL2}) that 
\begin{align}
|\delta(\Rc_{G})|^{2}=-12\R_{G}|\Ric_{G}|^{2}.
\end{align}
Combining these results, proves the lemma. 
\end{proof}

\begin{proof}[Proof of Theorem \ref{THMDW2}]
By Lemmas \ref{LEMT1} and \ref{LEMT2},
\begin{align*}
\frac{\del }{\del t}&\WW_{+}(\bar{g}(t),f(t), a, t)
\\ =& \ \frac{\del }{\del t}\WW_{L,+}(\bar{g}(t),f(t),t)
 -\frac{at}{4} \int_{M}  \big| \delta(G^{-1}DG) \big|^{2} u \hspace{.2mm} dV_{g}
\\ &+6at^{2} \int_{M} \big< \Ric_{G}, D_{\cdot}G\big> \big< \Ric_{G}, D_{\cdot}G\big> u \hspace{.2mm} dV_{g}
  +\frac{3a}{8}t^{2} \int_{M} \big| [,] \big|^{2} \big| \delta(G^{-1}DG) \big|^{2} u \hspace{.2mm} dV_{g}
\\ \quad &-a\int_{M} \Big( -t^{2} \big| \delta(\Rc_{G})\big|^{2} + 4t  \big| \Ric_{G}\big|^{2} + \R_{G}  \Big) u \hspace{.2mm} dV_{g}.
\end{align*}
The theorem then follows from Theorem \ref{THMDWL} and the relation: 
\begin{align*}
-t^{2} \big| \delta(\Rc_{G})\big|^{2} + 4t  \big| \Ric_{G}\big|^{2} + \R_{G}
 =-t^{2}|\delta(\Rc_{G}+ \frac{\mathbb{1}}{2t})|^{2}.
\end{align*}
\end{proof}

\section{Blowdown Limits and Solitons} \label{SECBLOWSOL}

In this section, we use the $\mathcal{W}_{+}$-functional to derive blowdown limit rigidity results which are stronger than those in  Theorem \ref{THMBLOW}, in the case when the structure group is $Nil^{3}$. Consequently, we prove that the blowdown limit is locally an expanding Ricci soliton.

To obtain the rigidity results, as above we define $Z(\mathfrak{G})=\cup_{m \in M} Z(\mathfrak{G}|_{m})$, where $Z(\mathfrak{G}|_{m})$ is the center of the Lie algebra $(\mathfrak{G}|_{m}, [,])$. We then have

\begin{thm}\label{THMBLOW2}
Let $\bar{g}(t)$ be an invariant Ricci flow solution   defined for  $t \in (0, \infty)$       on a $Nil^{3}$-principal bundle $P\rightarrow M$, over a compact n-dimensional manifold, such that the curvature $F_{\bar{A}(t)}=0$.  Define $\bar{g}_{i}(t)=s_{i}^{-1}\bar{g}(s_{i}t)$, where $\{s_{i}\}$ is a sequence of positive numbers such that $\lim s_{i}=\infty$. We suppose there exists a $Nil^{3}$--principal bundle $P_{\infty} \rightarrow M_{\infty}$, over a compact manifold, together with bundle isomorphisms $\psi_{i}:P_{\infty} \rightarrow P$  such that $\psi_{i}^{*}\bar{g}_{i}(t)$  converges smoothly and uniformly to $\bar{g}_{\infty}(t)$ on $P_{\infty} \times [j^{-1},j]$, for all $j>0$. The following holds true in the limit: 
\begin{enumerate}
\item[1)] $\mathfrak{G}_{0}:=Z(\mathfrak{G})^{\perp}$ is independent of  $t$
\item[2)] under the splitting  $\mathfrak{G}=\mathfrak{G}_{0} \oplus Z(\mathfrak{G})$,  $ D= D \oplus D$  
\item[3)] $DG|_{Z(\mathfrak{G})}=0$ and  $tr_{\mathfrak{G}_{0}}DG(\cdot,\cdot)=0$ 
 \item[4)]  $(D_{\cdot}DG)_{\cdot}- D_{\cdot}G(*,\cdot_{1})D_{\cdot}G(*,\cdot_{1})=0$
 \item[5)]  $\Ric_{g}  +\frac{g}{2t} -\frac{1}{4}DG(\cdot_{1},\cdot_{2})DG(\cdot_{1},\cdot_{2}) =0 $
\item[6)] $\delta \big( \Rc_{G}+ \frac{\mathbb{1}}{2t} \big)=0$
\item[7)] $\frac{\partial A}{\partial t}=0 \text{ and } \frac{\partial D}{\partial t}=0$
\item[8)]$g(t)= tg|_{t=1}$.
\end{enumerate}
Moreover, in the limit  we have 
\begin{align*}
      &  \quad  a) \ \R_{G}= -\frac{1}{6t}
        \\& \quad b) \ G(\eta,\eta')=t^{\frac{1}{3}}G|_{t=1}        
                           (\eta,\eta'), \text{ for all } \eta,\eta' \in \mathfrak{G}_{0}
        \\& \quad c) \ G(\eta,\eta')=t^{-\frac{1}{3}}G|_{t=1}
                           (\eta,\eta'), \text{ for all } \eta,\eta' \in Z(\mathfrak{G}).
\end{align*}
\end{thm}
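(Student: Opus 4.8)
The plan is to follow the proof of Theorem \ref{THMBLOW} verbatim through the convergence step, but to run it with the stronger functional $\mathcal{W}_{+}$ and its monotonicity (Corollary \ref{CORWM2}) in place of $\mathcal{W}_{L,+}$. First I would construct, exactly as there (following Proposition~4.79 of \cite{LOTT2}), a solution $f_{t}\in C^{\infty}(M)$ of the conjugate heat equation (\ref{EQRC4}) along $\bar{g}(t)$ and, setting $f_{i}(t)=f(s_{i}t)$, pass to a subsequence so that $\phi_{i}^{*}f_{i}$ converges to a solution $f_{\infty}(t)$ of (\ref{EQRC4}) with respect to $\bar{g}_{\infty}(t)$. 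Fixing any $a\in(0,1]$, the same telescoping computation as in Theorem \ref{THMBLOW}, using the scale invariance of $\mathcal{W}_{+}$ and the convergence $\psi_{i}^{*}\bar{g}_{i}(t)\to\bar{g}_{\infty}(t)$, gives
\[
\mathcal{W}_{+}(\bar{g}_{\infty}(t),f_{\infty}(t),a,t)=\lim_{i\to\infty}\mathcal{W}_{+}(\bar{g}(s_{i}t),f(s_{i}t),a,s_{i}t)=\lim_{u\to\infty}\mathcal{W}_{+}(\bar{g}(u),f(u),a,u),
\]
where the final limit exists because $\mathcal{W}_{+}(\bar{g}(t),f(t),a,t)$ is nondecreasing by Corollary \ref{CORWM2} and the preceding subsequential limit exists. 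Hence $\mathcal{W}_{+}(\bar{g}_{\infty}(t),f_{\infty}(t),a,t)$ is constant in $t$.

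Next I would substitute $\frac{\partial}{\partial t}\mathcal{W}_{+}=0$ into the monotonicity formula of Theorem \ref{THMDW2}. Since $u>0$ and every integrand there is nonnegative for $a\in(0,1]$, each term must vanish identically in the limit. The first two terms give the gauged conditions (\ref{EQL1}) and (\ref{EQL2}), the fourth gives $DG([*,\cdot],\cdot)=0$, i.e. (\ref{EQL4}); and since $\mathcal{G}=Nil^{3}$ is nonabelian we have $|[,]|^{2}>0$ everywhere, so the $\tfrac{3}{8}at^{2}|[,]|^{2}|\delta(G^{-1}DG)|^{2}$ term forces $\delta(G^{-1}DG)=0$, which is (\ref{EQL3}). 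These four conditions are precisely the vanishing of the four integrands in Theorem \ref{THMDWL}, hence equivalent to $\frac{\partial}{\partial t}\mathcal{W}_{L,+}=0$; thus Proposition \ref{PROPWLCON} and Corollary \ref{CORWLCON} apply directly and yield Parts 1)--5), 7) and 8) of the theorem, along with $\R_{G}=-\tfrac{1}{6}(t+C)^{-1}$ for some $C\geq0$.

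The new ingredient, which is the reason for enlarging Lott's functional, is the final term $at^{2}\int_{M}|\delta(\Rc_{G}+\tfrac{\mathbb{1}}{2t})|^{2}\,u\,dV_{g}$. For $a>0$ its vanishing gives, with no gauge dependence on $f$, the pointwise identity $\delta(\Rc_{G}+\tfrac{\mathbb{1}}{2t})=0$, which is Part 6). By Lemma \ref{PROPDELNIL} this is equivalent on a $Nil^{3}$-bundle to $\R_{G}=-\tfrac{1}{6t}$; comparing with $\R_{G}=-\tfrac{1}{6}(t+C)^{-1}$ from Corollary \ref{CORWLCON} forces $C=0$, upgrading that corollary's conclusions to the sharp forms a), b) and c) stated here.

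I expect the only delicate point to be the constancy step: justifying that $\mathcal{W}_{+}$ commutes with the Cheeger--Gromov limit and that $\lim_{u\to\infty}\mathcal{W}_{+}(\bar{g}(u),f(u),a,u)$ exists. Both are handled exactly as in Theorem \ref{THMBLOW}, with monotonicity (Corollary \ref{CORWM2}) promoting the subsequential limit to a genuine one. Once constancy is established, everything else is a matter of reading off vanishing nonnegative integrands and invoking the already-proved Proposition \ref{PROPWLCON}, Corollary \ref{CORWLCON} and Lemma \ref{PROPDELNIL}; no genuinely new computation is needed.
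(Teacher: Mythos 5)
Your proposal is correct and follows essentially the same route as the paper: establish constancy of $\mathcal{W}_{+}$ in the blowdown limit exactly as in Theorem \ref{THMBLOW}, use the monotonicity formula of Theorem \ref{THMDW2} to force each nonnegative integrand to vanish, reduce to $\frac{\partial}{\partial t}\mathcal{W}_{L,+}=0$ plus $\delta(\Rc_{G_{\infty}}+\frac{\mathbb{1}}{2t})=0$, and then invoke Proposition \ref{PROPWLCON}, Corollary \ref{CORWLCON} and Lemma \ref{PROPDELNIL} to get Parts 1)--8) and, via $C=0$, the sharp forms a)--c). Your only departure is expository: you fix a single $a\in(0,1]$ and extract $\delta(G^{-1}DG)=0$ from the $\tfrac{3}{8}at^{2}|[,]|^{2}|\delta(G^{-1}DG)|^{2}$ term using that $Nil^{3}$ is nonabelian (handling $a=1$, where the $(1-a)$ term degenerates), whereas the paper runs the argument over the range $a\in[0,1]$; both are valid.
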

\begin{rmk} \label{RMKBS1}
By Lemma \ref{PROPDELNIL}, the condition $\delta(\Rc_{G}+ \frac{\mathbb{1}}{2t})=0$ is equivalent to  $\R_{G}= -\frac{1}{6t}$. Also, $\delta(G^{-1}DG)=0$ follows from Parts 2) and 3) of the theorem and Lemma \ref{LEMDERNIL}. 
\end{rmk}
\begin{proof}
Similar to the proof of Theorem \ref{THMBLOW}, the blowdown limit satisfies 
\begin{align} \label{EQBS1}
\frac{\partial}{\partial t} \mathcal{W}_{+}(\bar{g}_{\infty}(t), f_{\infty}(t), a, t) =0,
\end{align}
 where $a \in [0,1]$ and $f_{\infty}(t) \in C^{\infty}(M_{\infty})$ is a solution to the conjugate heat equation (\ref{EQRC4}) with respect to  $\bar{g}_{\infty}(t)$.

By Theorem \ref{THMDW2}, (\ref{EQBS1})  is equivalent to 
\begin{align}
&\frac{\partial}{\partial t} \mathcal{W}_{L,+}(\bar{g}_{\infty}(t), f_{\infty}(t),t)=0
\\ & \delta(\Rc_{G_{\infty}}+ \frac{\mathbb{1}}{2t})=0,
\end{align}
where $\mathcal{W}_{L,+}$ is Lott's functional given in Definition \ref{DEFWL}.
 Hence the conclusions of Proposition \ref{PROPWLCON} and Corollary \ref{CORWLCON} and the condition $\delta(\Rc_{G_{\infty}}+ \frac{\mathbb{1}}{2t})=0$ hold true in the blowdown limit. Given that $\mathcal{G}$ is $Nil^{3}$, these conditions are equivalent to Parts 1)-8) and a)-c) in the theorem.  
\end{proof}
We then have
\begin{thm}\label{THMSOL1}
The blowdown limit in the setup of Theorem \ref{THMBLOW2} is locally an expanding Ricci soliton. 
\end{thm}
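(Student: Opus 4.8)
The plan is to exhibit a time-dependent vertical vector field $\bar{V}(t)$ on a suitable cover of the total space $P_{\infty}$ and verify that $\bar{g}_{\infty}(t)$ satisfies the expanding soliton equation
\[
\bar{\Ric} + \frac{\bar{g}_{\infty}}{2t} + \frac{1}{2}\mathcal{L}_{\bar{V}(t)}\bar{g}_{\infty} = 0,
\]
which, by the convention following (\ref{EQSSRF1}) with $c=1$, identifies $\bar{g}_{\infty}(t)$ as an expanding Ricci soliton. The geometric content is that the entire ``soliton defect'' $\bar{\Ric} + \frac{\bar{g}_{\infty}}{2t}$ is concentrated in the fibers and is exactly compensated by a fiberwise dilation coming from the nilsoliton condition.

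First I would decompose $\bar{\Ric} + \frac{\bar{g}_{\infty}}{2t}$ along $\mathcal{E}=\mathfrak{G}\oplus TM$ using Lemma \ref{p:Ricci} and Notation \ref{NOTABARR} together with the conclusions of Theorem \ref{THMBLOW2}. As in the proof of Proposition \ref{PROPWLCON}, the harmonic-Einstein equations (Parts 4 and 5) and $DG(\cdot,\cdot)=0$ give $\bar{\Ric}_{ia}=0$ and $\bar{\Ric}_{ab}=-\frac{g}{2t}$, while $\bar{\Ric}_{ij}=\Ric_{G}$. Hence the mixed and base-base components of $\bar{\Ric}+\frac{\bar{g}_{\infty}}{2t}$ vanish identically, and only the fiber-fiber component survives, equal to $\Ric_{G}+\frac{G}{2t}=G\big((\Rc_{G}+\frac{\mathbb{1}}{2t})\,\cdot\,,\cdot\big)$.

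The key input is Part 6, $\delta(\Rc_{G}+\frac{\mathbb{1}}{2t})=0$, which by Notation \ref{RMKDEL} means that $B_{t}:=-(\Rc_{G}+\frac{\mathbb{1}}{2t})$ is a fiberwise derivation of $\mathfrak{G}$. I would promote $B_{t}$ to a vertical vector field $\bar{V}(t)$ on $P_{\infty}$ whose fiberwise flow is $\exp(sB_{t})\in\Aut(Nil^{3})$. This is legitimate because $B_{t}$ is $D$-parallel: $\R_{G}=-\frac{1}{6t}$ is constant on $M$ by Theorem \ref{THMBLOW2}(a), $\Rc_{G}=\R_{G}\mathbb{1}\oplus-\R_{G}\mathbb{1}$ in the $D$-parallel splitting $\mathfrak{G}_{0}\oplus Z(\mathfrak{G})$ (Parts 1--3), and $D\Rc_{G}=0$; one computes $B_{t}=\frac{1}{t}B_{1}$, matching the normalization $V(t)=V/(ct)$ with $c=1$. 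Flatness $F=0$ and the static connection (Part 7) ensure that the flow of $\bar{V}(t)$ covers the identity on $M$ and preserves the horizontal distribution and the splitting. I would then check the soliton equation componentwise: the fiber-fiber part follows exactly as in Lemma \ref{LEMSOL2}, since self-adjointness of $\Rc_{G}$ gives $\mathcal{L}_{\bar{V}(t)}G=G(B_{t}\,\cdot\,,\cdot)+G(\cdot,B_{t}\,\cdot)=-2(\Ric_{G}+\frac{G}{2t})$, which cancels the surviving defect; the base-base and mixed parts of $\mathcal{L}_{\bar{V}(t)}\bar{g}_{\infty}$ vanish because $\bar{V}(t)$ is vertical, $D$-parallel, and the connection is flat, so its flow acts trivially on $g$ and preserves the $\mathfrak{G}\oplus TM$ splitting.

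The main obstacle is the globalization of $\bar{V}(t)$ to an honest family of diffeomorphisms. Since $B_{t}$ is a dilation of $Nil^{3}$, and dilations do not preserve cocompact lattices, the generated flow need not descend to compact nilmanifold fibers; this is precisely why the conclusion is only \emph{local} (equivalently, it holds after lifting the fibers to the universal cover $\mathbb{R}^{3}$, in parallel with Corollary \ref{CORINTROLIEBL}). I would therefore run the construction on the cover where the fibers are simply connected, verify that the $D$-parallel splitting and flatness let the fiberwise dilations patch consistently over $M$ (the only potential obstruction, the holonomy of $D$, preserves the splitting and hence commutes with the diagonal $B_{t}$), and conclude that $\bar{g}_{\infty}(t)$ is locally an expanding Ricci soliton. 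This is the content recorded in Lemma \ref{LEMSOL1}.
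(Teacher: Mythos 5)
Your core computation is sound and genuinely different from the paper's argument, so it is worth comparing the two routes. The paper proves Theorem \ref{THMSOL1} by feeding Theorem \ref{THMBLOW2} and Remark \ref{RMKBS1} into Lemma \ref{LEMSOL1}, whose proof never writes down the identity $\bar{\Ric}+\frac{\bar{g}}{2t}+\frac{1}{2}\mathcal{L}_{\bar{V}(t)}\bar{g}=0$. Instead, after passing to a local flat trivialization $U\times\mathcal{G}$ (which splits $\bar{g}(t)=g(t)\oplus\tilde{G}_{m}(t)$), it uses $D\Rc_{G}=0$ and $\delta(\Rc_{G}+\frac{\mathbb{1}}{2t})=0$ to produce an $m$-independent derivation $B$, invokes Lemma \ref{LEMSOL2} to see that $t\psi_{t}^{*}(\tilde{G}_{m}|_{t=1})$ with $\psi_{t}=\exp(\ln t\, B)$ is an invariant Ricci flow, and then uses uniqueness of invariant Ricci flows (an ODE on inner products on $\mathfrak{g}$) to conclude $\tilde{G}_{m}(t)=t\psi_{t}^{*}(\tilde{G}_{m}|_{t=1})$; combined with $g(t)=tg|_{t=1}$ this exhibits $\bar{g}(t)=t\hat{\psi}_{t}^{*}(\bar{g}|_{t=1})$, i.e.\ exactly the self-similar form (\ref{EQSSRF1}) with $c=1$. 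Your route instead verifies the soliton identity pointwise in time: decompose $\bar{\Ric}+\frac{\bar{g}}{2t}$ via Lemma \ref{p:Ricci}, observe that Parts 4 and 5 of Theorem \ref{THMBLOW2} (together with the vanishing of the mixed Ricci term, which requires $DG([*,\cdot],\cdot)=0$, available from Parts 2 and 3, not merely from $DG(\cdot,\cdot)=0$) kill everything except the fiber block $\Ric_{G}+\frac{G}{2t}$, and cancel that block by the Lie derivative of the vertical field generated by $B_{t}=-(\Rc_{G}+\frac{\mathbb{1}}{2t})$. This is a more conceptual picture of where the soliton defect lives; the paper's route has the advantage of producing the diffeomorphisms $\hat{\psi}_{t}$ explicitly, which is what Theorem \ref{THMFOURSOL} later uses.

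There is, however, one genuine gap you must close. By the paper's convention (the Notation following (\ref{EQSSRF1})), an expanding Ricci soliton \emph{is} a self-similar solution $g(t)=ct\psi_{t}^{*}g_{0}$; satisfying the identity $\bar{\Ric}+\frac{\bar{g}}{2t}+\frac{1}{2}\mathcal{L}_{\bar{V}(t)}\bar{g}=0$ at each time is not, by itself, that statement. You need the integration step: with $\hat{\psi}_{t}$ the flow of $\bar{V}(t)=\frac{1}{t}\bar{V}_{1}$ (fiberwise the automorphisms $\exp(\ln t\, B_{1})$, so the flow exists and commutes with $\bar{V}_{1}$), the Ricci flow equation plus your identity give $\partial_{t}\bar{g}=\frac{\bar{g}}{t}+\mathcal{L}_{\bar{V}(t)}\bar{g}$, whence $\frac{d}{dt}\big[(\hat{\psi}_{t}^{-1})^{*}\bar{g}(t)\big]=\frac{1}{t}(\hat{\psi}_{t}^{-1})^{*}\bar{g}(t)$ and so $(\hat{\psi}_{t}^{-1})^{*}\bar{g}(t)=t\,\bar{g}(1)$, i.e.\ $\bar{g}(t)=t\hat{\psi}_{t}^{*}\bar{g}(1)$. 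Alternatively one can appeal to uniqueness of invariant Ricci flows, as the paper does; either way, some such argument is required to match the paper's definition.

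Separately, your explanation of why the conclusion is only local is off target. The fibers of a principal $Nil^{3}$-bundle are copies of the simply connected group (diffeomorphic to $\mathbb{R}^{3}$), not compact nilmanifolds, so cocompact lattices are irrelevant here. The real obstruction is the holonomy of the flat connection $\bar{A}$: automorphisms of $\mathcal{G}$ do not act naturally on the fibers, which are $\mathcal{G}$-torsors, so defining $\bar{V}(t)$ requires a choice of flat local section, and two such choices differ by a locally constant transition function $a\in\mathcal{G}$. The fiberwise flows patch only if $\exp(sB_{t})(a)=a$, which fails for $a\neq e$ because $B_{t}$ has no nonzero fixed vectors; your parenthetical claim that holonomy ``commutes with the diagonal $B_{t}$'' conflates commutation in $\End\mathfrak{g}$ with commutation of the group automorphism with right translation by $a$, and the latter is what patching would need. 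None of this harms the theorem, which is stated locally --- you may simply work over a single flat trivialization, exactly as Lemma \ref{LEMSOL1} does --- but the globalization remarks should be corrected or removed.
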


\begin{rmk}
The blowdown limit in the abelian setting is known to locally be an expanding Ricci soliton \cite{LOTT2}. One may also use Lemma \ref{LEMSOL1} below to prove this result. 
\end{rmk}

The proof  of Theorem \ref{THMSOL1} follows directly from Theorem \ref{THMBLOW2}, Remark \ref{RMKBS1} and the following lemma, which holds true for a general nilpotent structure group.

\begin{lemma} \label{LEMSOL1}
Let $\bar{g}(t)$ be an invariant Ricci flow defined for $t\in (0, \infty)$ on a nilpotent $\mathcal{G}$-principal bundle $P \rightarrow M$ such that
\begin{align}
& F_{\bar{A}(t)}=0  \hspace{ 18mm} \frac{\partial A}{\partial t}=0
\\& \frac{\partial G}{\partial t}=-2\Ric_{G}  \hspace{ 10mm} g(t)=tg|_{t=1}
\\& \delta(G^{-1}DG)=0 \hspace{ 8mm} \delta(\Rc_{G} + \frac{\mathbb{1}}{2t})=0.
\end{align}
Choose a trivialization $\phi: U \times \mathcal{G} \rightarrow P$ such that $\phi^{*}\bar{A}$ is the standard flat connection on $U \times \mathcal{G}$ and let $\hat{g}(t)$ be the natural pullback of $\phi^{*}\bar{g}(t)$ to $U \times \mathcal{\tilde{G}}$, where $\tilde{\mathcal{G}}$ is the universal cover of $\mathcal{G}$. Then $\hat{g}(t)$ is an expanding Ricci soliton. 
\end{lemma}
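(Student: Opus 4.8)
The plan is to verify the self-similar equation $\Ric_{\hat{g}(t)} + \tfrac{\hat{g}(t)}{2t} + \tfrac{1}{2}\mathcal{L}_{V(t)}\hat{g}(t) = 0$ directly for an explicitly constructed \emph{vertical} vector field $V(t)$, which by Section \ref{SECSOL1} exhibits $\hat{g}(t)$ as an expanding Ricci soliton. Since $F_{\bar{A}}=0$ and $\partial A/\partial t=0$, the chosen gauge makes $\phi^{*}\bar{A}$ the flat connection for every $t$, so the horizontal distribution is $TU$ at all times and $\hat{g}(t)$ is block diagonal: on base directions it equals $g(t)=t\,g|_{t=1}$, on vertical directions it equals the invariant metric $G|_{m}(t)$ on $\tilde{\mathcal{G}}$ (varying with the base point $m$), and the mixed terms vanish identically in space and time. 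Because $\hat{g}(t)$ is a local isometric pullback of the Ricci flow $\bar{g}(t)$, it is itself a Ricci flow, so $\Ric_{\hat{g}}=-\tfrac12\,\partial_{t}\hat{g}$; the soliton equation is therefore equivalent to the Lie-derivative identity $\mathcal{L}_{V(t)}\hat{g}=\partial_{t}\hat{g}-\tfrac1t\hat{g}$. This is the key simplification, as it bypasses any direct computation of $\Ric_{\hat{g}}$ from Lemma \ref{p:Ricci}.

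To build $V(t)$, I would first note that $\delta(G^{-1}DG)=0$ together with \eqref{EQDRC2} forces $D\Rc_{G}=0$; since the identity endomorphism is $D$-parallel, the section $B:=-\big(\Rc_{G}+\tfrac{\mathbb{1}}{2t}\big)$ of $\End\mathfrak{G}$ is $D$-parallel, hence \emph{constant in $m$} in the flat gauge. By the hypothesis $\delta(\Rc_{G}+\tfrac{\mathbb{1}}{2t})=0$ and Notation \ref{RMKDEL}, $B$ is a derivation of $\mathfrak{g}$, so on the simply connected nilpotent group $\tilde{\mathcal{G}}$ each $\exp(sB)$ is an automorphism. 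Setting $\Psi_{s}(m,x)=(m,\exp(sB)x)$ then defines a flow of diffeomorphisms of $U\times\tilde{\mathcal{G}}$ whose generator is the vertical field $V(t)$ with $V|_{(m,x)}=Bx$, exactly as in Lemma \ref{LEMSOL2}.

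The verification splits into three blocks. On the base, $\partial_{t}\hat{g}-\tfrac1t\hat{g}=g|_{t=1}-g|_{t=1}=0$, while $V$ being vertical with fiber-only coefficients gives $\mathcal{L}_{V}\hat{g}=0$ there. On the fibers, $\partial_{t}\hat{g}-\tfrac1t\hat{g}=-2\Ric_{G}-\tfrac1t G$, and since $\Psi_{s}$ preserves each fiber and restricts to the automorphism flow $\exp(sB)$, the fiber-fiber part of $\mathcal{L}_{V}\hat{g}$ equals the intrinsic $G(B\cdot,\cdot)+G(\cdot,B\cdot)=-2\Ric_{G}-\tfrac1t G$, precisely the computation in Lemma \ref{LEMSOL2}; the two agree. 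For the mixed block the relevant term of $\partial_{t}\hat{g}-\tfrac1t\hat{g}$ is zero, while the mixed block of $\mathcal{L}_{V}\hat{g}$ reduces, after the cross terms of $\hat{g}$ drop out, to a contraction of $G$ with the $D$-derivative of the coefficients of $V$, i.e. with $DB$.

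The hard part is exactly this mixed block: it vanishes only because $B$ is $m$-independent in the flat gauge, which is where $D\Rc_{G}=0$ (equivalently $\delta(G^{-1}DG)=0$) is indispensable — if $\Rc_{G}$ varied over the base, the natural soliton field would fail to be purely vertical and the construction would collapse. Once the three blocks are matched we obtain $\mathcal{L}_{V(t)}\hat{g}=\partial_{t}\hat{g}-\tfrac1t\hat{g}$, hence $\Ric_{\hat{g}(t)}+\tfrac{\hat{g}(t)}{2t}+\tfrac12\mathcal{L}_{V(t)}\hat{g}(t)=0$, so $\hat{g}(t)$ is an expanding Ricci soliton.
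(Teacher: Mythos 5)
Your block-by-block verification of $\mathcal{L}_{V(t)}\hat{g}=\partial_{t}\hat{g}-\tfrac{1}{t}\hat{g}$ is correct, and its key ingredients coincide with the paper's: in the flat gauge $D$ becomes the trivial connection on $M\times\mathfrak{g}$, so $\delta(G^{-1}DG)=0$ together with (\ref{EQDRC2}) gives $D\Rc_{G}=0$ and hence a base-independent $B$, while $\delta(\Rc_{G}+\tfrac{\mathbb{1}}{2t})=0$ makes $B$ a derivation, exponentiating to automorphisms of $\tilde{\mathcal{G}}$. The genuine gap is in your last sentence. In this paper ``expanding Ricci soliton'' is defined (Notation in Section \ref{SECSOL1}) as a \emph{self-similar} solution $\hat{g}(t)=t\,\hat{\psi}_{t}^{*}(\hat{g}|_{t=1})$, and that is what the paper's proof establishes; your argument stops at the time-dependent identity $\Ric_{\hat{g}(t)}+\tfrac{\hat{g}(t)}{2t}+\tfrac{1}{2}\mathcal{L}_{V(t)}\hat{g}(t)=0$. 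Section \ref{SECSOL1} cannot be cited to bridge this: it goes in the opposite direction, from a \emph{static} soliton metric together with an already-integrated flow to a self-similar solution, and its framework presupposes $V(t)=V/(ct)$ for a fixed field $V$, whereas your generator $B(t)=-\bigl(\Rc_{G(t)}+\tfrac{\mathbb{1}}{2t}\bigr)$ is genuinely time-dependent and you never show $B(t)=B(1)/t$.

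The gap can be closed in two ways, and you should do one explicitly. (i) Integrate your identity: each $-V(t)$ is linear in the fiber coordinates, so the nonautonomous flow $\chi_{t}$ with $\partial_{t}\chi_{t}=-V(t)\circ\chi_{t}$, $\chi_{1}=\mathbb{1}$, exists for all $t>0$, and then
\begin{align*}
\partial_{t}\bigl(\chi_{t}^{*}\hat{g}(t)\bigr)=\chi_{t}^{*}\bigl(\partial_{t}\hat{g}-\mathcal{L}_{V(t)}\hat{g}\bigr)=\tfrac{1}{t}\,\chi_{t}^{*}\hat{g}(t),
\end{align*}
so $\chi_{t}^{*}\hat{g}(t)=t\,\hat{g}(1)$, i.e. $\hat{g}(t)=t\,(\chi_{t}^{-1})^{*}\hat{g}(1)$, which is the required self-similar form. (ii) Follow the paper's route, which avoids Lie derivatives entirely: with $\psi_{t}=\exp(\ln t\,B)$, Lemma \ref{LEMSOL2} shows $t\,\psi_{t}^{*}(\tilde{G}_{m}|_{t=1})$ is an invariant Ricci flow on the fiber; since the hypothesis $\tfrac{\partial G}{\partial t}=-2\Ric_{G}$ says $\tilde{G}_{m}(t)$ solves the same finite-dimensional ODE on invariant metrics with the same value at $t=1$, ODE uniqueness gives $\tilde{G}_{m}(t)=t\,\psi_{t}^{*}(\tilde{G}_{m}|_{t=1})$, and combining with $g(t)=t\,g|_{t=1}$ and the block splitting yields self-similarity directly. (If you prefer route (i) phrased via Section \ref{SECSOL1}, you would instead need $B(t)=B(1)/t$, which does follow from $\partial_{t}\Rc_{G}=-\tfrac{1}{t}\Rc_{G}$ via (\ref{LABDRC1}) and Lemma \ref{LEMRCDEL1}, but this too must be proved.) Note also that both fixes deliberately reduce to fiberwise ODEs or explicit flows; appealing to uniqueness of Ricci flow on the noncompact total space $U\times\tilde{\mathcal{G}}$ would not be justified.
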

\begin{proof}

After taking the trivialization in the lemma, we may assume: 

$\bullet$ $\mathcal{G}=(\mathbb{R}^{n}, \cdot)$ is a nilpotent Lie group with Lie algebra $\mathfrak{g}=(\mathbb{R}^{n}, [,])$

$\bullet$ $\bar{g}(t)$ is an invariant Ricci flow for $t\in (0, \infty)$ on the principal bundle $\pi: P=M \times \mathcal{G} \rightarrow M$ such that  
\begin{align} \label{EQGSPLIT}
\bar{g}(t)|_{(m,x)}=   g(t)|_{m} \oplus   \tilde{G}_{m}(t)|_{x} 
\end{align} 
based on $T_{(m,x)}P= T_{m}M \oplus T_{x}\mathcal{G}$, where $\tilde{G}_{m}(t):= \bar{g}(t)|_{\pi^{-1}(m)}$. Moreover, we may suppose 
\begin{align}
&\label{EQS1} \frac{\partial G}{\partial t}=-2\Ric_{G}  
 \\ \label{EQS2} &g(t)=tg|_{t=1}
\\& \label{EQS3} \delta(G^{-1}DG)=0 
\\ \label{EQS4}& D \Rc_{G}=0
\\  \label{EQS5}& \delta(\Rc_{G} + \frac{\mathbb{1}}{2t})=0.
\end{align}
Note that (\ref{EQS4}) follows from (\ref{EQS3}) and (\ref{EQDRC2}).

Given this data, we will first show that there exists $\psi_{t} \in \Aut \mathcal{G}$ such that 
\begin{align} \label{EQSOL1}
 \tilde{G}_{m}(t)=t \psi_{t}^{*}\big{(}\tilde{G}_{m}|_{t=1} \big{)},
\end{align}
for all $m\in M$. 

For this, consider the isomorphism
\begin{align*}
\rho: \mathfrak{G} \rightarrow M \times \mathfrak{g}
\end{align*}
given by $\rho(\{s(m),x\})=(m,x)$, where $x \in \mathfrak{g}$ and $s:M\rightarrow P=M\times \mathcal{G}$ is defined by $s(m)=(m,0).$ Under this isomorphism, the connection $D$ on $\mathfrak{G}$ corresponds to the trivial connection on $M \times \mathfrak{g}$. This follows from (\ref{EQGSPLIT}) and Part c) of Lemma \ref{PROPBRA}. Moreover, we have that 
\begin{align*}
 \rho|_{m}(\eta)= (m, \tilde{\eta}|_{(m,0)}),
\end{align*}
for all $\eta \in \mathfrak{G}|_{m}$ and $m \in M$. Here we are using the ``tilde"-notation of Section \ref{SECRFP1}, defined by  
\begin{align*}
\tilde{\eta}|_{p}:= \frac{d}{dt}|_{t=0} \hspace{.3mm} p\cdot exp(tx),
\end{align*}
 for $\eta=\{p,x\} \in \mathfrak{G}$, $p\in P$ and $x \in \mathfrak{g}$.

This together with (\ref{EQS4}) and (\ref{EQS5}) imply that  
\begin{align*} 
B|_{m}:=-\Rc_{\tilde{G}_{m}}|_{t=1} -\frac{\mathbb{1}}{2}, 
\end{align*}
 which is implicitly evaluated at $0\in \mathcal{G}$, is a derivation of $\mathfrak{g}$ that is independent of $m \in M$. We will let $B:=B|_{m}$ for any $m \in M$. 

 If we define $\psi_{t}=exp(ln t \hspace{.3mm} B)$, which is an element of $\Aut{\mathfrak{g}}=\Aut \mathcal{G}$, then by Lemma \ref{LEMSOL2},  $t \psi_{t}^{*}\big{(}\tilde{G}_{m}|_{t=1} \big{)}$ is an invariant Ricci flow on $\mathcal{G}$. Since, by (\ref{EQS1}), $\tilde{G}_{m}(t)$ is also a Ricci flow, (\ref{EQSOL1}) holds true. 

Defining
 \begin{align*} & \hat{\psi}_{t}: M\times \mathcal{G} \rightarrow M \times \mathcal{G}
  \\& \hat{\psi}_{t}(m,x)=(m, \psi_{t}(x)),
\end{align*}
it then follows from  (\ref{EQGSPLIT}), (\ref{EQS2}) and (\ref{EQSOL1}) that 
\begin{align}
\bar{g}(t)= t \hat{\psi}_{t}^{*} (\bar{g}|_{t=1}).
\end{align}
Hence $\bar{g}(t)$ is an expanding Ricci soliton  on $P$.
\end{proof}
\section{Blowdown Limits and Solitons: Four Dimensional Case}\label{SECBLOWSOL2}

In this section, we study Ricci flow blowdown limits on $Nil^{3}$-principal bundles with one dimensional base manifolds. We show that the limit is locally an expanding Ricci soliton and  classify its solution. Combining this with Lott's work in the abelian setting \cite{LOTT2}, yields a local classification of invariant Ricci flow blowdown limits on four dimensional, nilpotent principal bundles fibered over compact base manifolds. 

We have the following theorem:
\begin{thm}\label{THMFOURSOL}
Let $\hat{g}(t)$ be an invariant Ricci flow solution   defined for  $t \in (0, \infty)$       on a $Nil^{3}$-principal bundle $P\rightarrow M$, over a compact one dimensional manifold. Define $\hat{g}_{i}(t)=s_{i}^{-1}\hat{g}(s_{i}t)$, where $\{s_{i}\}$ is a sequence of positive numbers such that $\lim s_{i}=\infty$. We suppose there exists a $Nil^{3}$--principal bundle $P_{\infty} \rightarrow M_{\infty}$, over a compact one dimensional  manifold, together with bundle isomorphisms $\psi_{i}:P_{\infty} \rightarrow P$  such that $\psi_{i}^{*}\hat{g}_{i}(t)$  converges smoothly and uniformly to $\bar{g}_{\infty}(t)$ on $P_{\infty} \times [j^{-1},j]$, for all $j>0$. The following holds true:

1) $\bar{g}_{\infty}(t)$ is locally an expanding Ricci soliton.

2) There exists a trivialization $\phi: \tilde{U} \times Nil^{3} \rightarrow P_{\infty}$, a coordinate $s: \tilde{U}\rightarrow U:=s(\tilde{U})$ and a basis $\{x_{i}\}$ of $\mathfrak{nil}^{3}$, where $U$ is an open interval about $0$ in $\mathbb{R}$ and $x_{3} \in Z(\mathfrak{nil}^{3})$,  such that $\bar{g}(t)= ( \phi \circ (s^{-1}, \mathbb{1}) )^{*}\bar{g}_{\infty}(t)$ on $U \times Nil^{3}$ satisfies: 
\begin{align}
\label{EQSOL4A}\bar{g}(t)|_{(s,0)}(x_{i},x_{j})&=
\begin{pmatrix}
 t^{\frac{1}{3}}e^{sX} & 0_{2 \times 1} \\
0_{1 \times 2} & t^{-\frac{1}{3}} 
\end{pmatrix}_{ij}
\\ \label{EQSOL4B} g(t)&=\frac{t}{2}\tr X^{2}\hspace{.2mm} ds^{2},
\end{align}
where $(s,0) \in U \times Nil^{3}$, given that $Nil^{3}= (\mathbb{R}^{3}, \cdot)$, and $X$ is a $2\times 2$ diagonal matrix with $\tr X=0.$

\end{thm}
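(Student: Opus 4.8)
The plan is to dispatch Part 1) from the machinery already assembled and to devote the real work to the explicit classification in Part 2), which I would reduce to a single matrix ODE along the base.

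\emph{Part 1.} Since $M_{\infty}$ is one dimensional there are no nonzero sections of $\wedge^{2}T^{*}M_{\infty}\otimes\mathfrak{G}$, so $F_{\bar{A}(t)}=0$ holds automatically and the standing hypothesis of Theorem~\ref{THMBLOW2} is free. That theorem then gives, in the limit, $\frac{\partial A}{\partial t}=0$, $\frac{\partial G}{\partial t}=-2\Ric_{G}$, $g(t)=tg|_{t=1}$, $\delta(G^{-1}DG)=0$ (Remark~\ref{RMKBS1}) and $\delta(\Rc_{G}+\frac{\mathbb{1}}{2t})=0$. These are precisely the hypotheses of Lemma~\ref{LEMSOL1}, so $\bar{g}_{\infty}(t)$ is locally an expanding Ricci soliton, which is Part 1) (cf.\ Theorem~\ref{THMSOL1}).

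\emph{Part 2.} I would first fix the trivialization $\phi\colon U\times Nil^{3}\to P_{\infty}$ in which $\phi^{*}\bar{A}$ is the standard flat connection, as in the proof of Lemma~\ref{LEMSOL1}; this is possible because $F=0$ makes $\bar{A}$ flat over the interval $U$. In the induced $D$-parallel frame $\{x_{i}\}$ of $\mathfrak{G}\cong U\times\mathfrak{nil}^{3}$, with $x_{3}$ spanning $Z(\mathfrak{nil}^{3})$, the connection $D$ acts as $\partial_{s}$ on components. By Theorem~\ref{THMBLOW2} (Parts 1)--3)) and Lemma~\ref{LEMDZ} the metric matrix $G_{ij}(s,t)=\bar{g}(t)|_{(s,0)}(x_{i},x_{j})$ is block diagonal with respect to $\mathfrak{G}_{0}\oplus Z(\mathfrak{G})$, the complement $\mathfrak{G}_{0}=Z(\mathfrak{G})^{\perp}$ is independent of $s$ and $t$, and $\partial_{s}G_{33}=0$; moreover Parts a)--c) pin down the $t$-dependence as $G_{0}(s,t)=t^{1/3}P(s)$ on $\mathfrak{G}_{0}$ and $G_{33}(s,t)=t^{-1/3}G_{33}|_{t=1}$ on $Z$, where $P(s)$ is a $2\times2$ symmetric positive definite matrix. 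Choosing the frame so that $P(0)=\mathbb{1}$ and normalizing $x_{3}$ so that $G_{33}|_{t=1}=1$ accounts for all blocks of (\ref{EQSOL4A}) except the $s$-dependence of $P$. To determine $P$, set $Q:=G_{0}^{-1}\partial_{s}G_{0}=P^{-1}P'$, which is $t$-independent. Writing Part 4) of Theorem~\ref{THMBLOW2} in the frame gives $\partial_{s}^{2}G_{0}-\Gamma^{s}_{ss}\partial_{s}G_{0}=\partial_{s}G_{0}\,G_{0}^{-1}\,\partial_{s}G_{0}$, which after substituting $P'=PQ$ collapses to $Q'=\Gamma^{s}_{ss}Q$. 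Part 5), together with $\Ric_{g}=0$ in one dimension, gives $g_{ss}(t)=\tfrac{t}{2}\tr(Q^{2})$, so $\Gamma^{s}_{ss}=\tfrac12\partial_{s}\log g_{ss}=\tr(QQ')/\tr(Q^{2})$; combined with $Q'=\Gamma^{s}_{ss}Q$ this forces $Q'$ to be pointwise proportional to $Q$, hence $Q(s)=c(s)Q(0)$ for a positive function $c$. Reparametrizing $s$ by $d\tilde{s}=c\,ds$ makes $Q\equiv Q(0)=:X$ constant while keeping $g=\tfrac{t}{2}\tr(X^{2})\,d\tilde{s}^{2}$ with constant coefficient, which is (\ref{EQSOL4B}). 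Part 3) yields $\tr Q=\partial_{s}\log\det G_{0}=0$, and symmetry of $P'=PX$ at $\tilde{s}=0$ (where $P=\mathbb{1}$) forces $X$ symmetric; an orthogonal change of the frame $x_{1},x_{2}$ then makes $X$ diagonal and traceless. Solving $P'=PX$ with $P(0)=\mathbb{1}$ gives $P(\tilde{s})=e^{\tilde{s}X}$, which is (\ref{EQSOL4A}).

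The main obstacle is the coupling in Part 2) between the fiber equation (Part 4)) and the base equation (Part 5)): Part 4) alone only yields $Q'=\Gamma^{s}_{ss}Q$, and the Christoffel symbol cannot be assumed to vanish a priori. The key is to feed the base-metric identity $g_{ss}=\tfrac{t}{2}\tr(Q^{2})$ from Part 5) back into $\Gamma^{s}_{ss}$, which is exactly what reduces the system to $Q'\parallel Q$ and lets a reparametrization decouple it and make $Q$ constant. The remaining points, namely the consistency of the $x_{3}$ normalization with $x_{3}\in Z(\mathfrak{nil}^{3})$ and the fact that the orthogonal diagonalization preserves $P(0)=\mathbb{1}$, are routine.
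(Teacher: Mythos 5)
Your proof is correct and follows essentially the same route as the paper: Part 1) via Theorem \ref{THMBLOW2} and Theorem \ref{THMSOL1}, and Part 2) by trivializing so that the connection is flat, working in a $D$-parallel frame $\{\lambda_{i}\}=\{\psi,x_{i}\}$, reducing Parts 3)--5) of Theorem \ref{THMBLOW2} to a matrix ODE along the base whose solution is $e^{sX}$ with $X$ symmetric and traceless, and then diagonalizing $X$ by an orthogonal change of the frame. The only difference is cosmetic: the paper fixes the coordinate $s$ at the outset so that $g_{ss}$ is constant (which makes $\Gamma^{s}_{ss}=0$ and collapses the fiber equation immediately to $Q'=0$), whereas you carry $\Gamma^{s}_{ss}$ through the ODE, feed $g_{ss}=\tfrac{t}{2}\tr(Q^{2})$ back in, and reparametrize at the end --- the same gauge freedom used at a different stage.
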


\begin{rmk}
See \cite{LOTT2} for the case when the structure group is abelian.
\end{rmk}

\begin{proof}
Since $dim M=1$, the $F=0$ assumption in Theorem \ref{THMBLOW2} holds true. Part 1) of Theorem \ref{THMFOURSOL} then follows from Theorem \ref{THMSOL1}.

As for Part 2), using that $\frac{\del \bar{A}_{\infty}}{\del t}=0$ and $F_{\bar{A}_{\infty}}=0$, choose a trivialization $\phi:\tilde{U} \times Nil^{3} \rightarrow P_{\infty}$ such that $\phi^{*}\bar{A}_{\infty}$ is the standard flat connection on $\tilde{U} \times Nil^{3}$. Also let $s: \tilde{U}\rightarrow U:=s(\tilde{U})$ be a coordinate so that $U$ is an open interval about $0$ in $\mathbb{R}$ and $g_{ss}$ is constant on $U$, 
where $(s^{-1})^{*}g_{\infty}(t)=g_{ss}(t)ds^{2}$.

We will first derive (\ref{EQSOL4A}) and (\ref{EQSOL4B}) for $\bar{g}(t)= \big( \phi \circ (s^{-1}, \mathbb{1}) \big) ^{*}\bar{g}_{\infty}(t)$ on $P= U \times Nil^{3} \rightarrow U$ when $t=1$. Let $\psi:U \rightarrow P$ be the section defined by $\psi(s)=(s,0)$, where $0 \in Nil^{3}= (\mathbb{R}^{3}, \cdot)$. 
Choose a basis $\{x_{i} \}$ for $\mathfrak{nil}^{3}$ such that $\lambda_{i}:= \{\psi,x_{i} \} \in \Gamma(\mathfrak{G})$ and $G_{ij}:=G(\lambda_{i},\lambda_{j})$ satisfy: 
\begin{align}
& Z(\mathfrak{G})= Span(\lambda_{3})
\\& Z(\mathfrak{G})^{\perp}= Span(\lambda_{1}, \lambda_{2})
\\& G_{ij}|_{s=0}=\delta_{ij},
\end{align}
where the terms above and below are to be evaluated at $t=1$. Here we used Part 2) of Theorem \ref{THMBLOW2} and, if it was necessary, replaced $U$ with a smaller interval about $0$ in $\mathbb{R}$. 

The following then holds true:
\begin{align}
 \label{EQSOL41} G_{33}&=1 \ \ \text{on} \ U
\\ \label{EQSOL42} \det G_{uv}&=1 \ \ \text{on} \ U
\\ \label{EQSOL43} \frac{d^{2}G_{uv}}{ds^{2}}&= \frac{dG_{uq}}{ds}\frac{dG_{vr}}{ds} G^{qr}
\\ \label{EQSOL44} 2g_{ss}&=\frac{dG_{uv}}{ds}\frac{dG_{qr}}{ds}G^{uq}G^{vr},
\end{align}
where the  $u,v,q,r$-indices range over $\{1,2\}$. These equations follow from Parts 3), 4)  and 5) of Theorem \ref{THMBLOW2}. In addition, one uses Lemma \ref{propGRAD} when deriving (\ref{EQSOL42}) and that  $g_{ss}$ is constant on $U$ for (\ref{EQSOL43}). 

The general solution of (\ref{EQSOL41})-(\ref{EQSOL44}) for a positive definite matrix $G_{ij}(s)$ with $G_{ij}|_{s=0}=\delta_{ij}$  is:
\begin{align}
& G_{ij}(s)= \begin{pmatrix}
G_{uv}(s) & 0_{2 \times 1} \\
0_{1 \times 2} & 1 
\end{pmatrix}
\\& G_{uv}(s)= (e^{sX})_{uv} \text{ and } g_{ss}=\frac{1}{2}\tr X^{2},
 \end{align}
where $X_{uv}$ is a symmetric $2\times 2$ matrix with $\tr X=0$. 

Next, choose an orthogonal $2 \times 2$ matrix $A$ such that $\hat{X}=AXA^{T}$ is diagonal and define
\begin{align}
 \hat{x}_{u}&=A_{uv}x_{v} \ \  \ \  \hat{x}_{3}=x_{3}
 \\ \hat{\lambda}_{u}&= \{\psi, \hat{x}_{u} \} \ \   \hat{\lambda_{3}}= \lambda_{3},
\end{align}
 where $u,v \in \{1,2\}$. 

In the $\{\hat{\lambda}_{i}\}$-frame, 

\begin{align}
G(\hat{\lambda}_{i},\hat{\lambda}_{j})&=
\begin{pmatrix}
 e^{s\hat{X}} & 0_{2 \times 1} \\
0_{1 \times 2} & 1 
\end{pmatrix}_{ij}
\\ g_{ss}&=\frac{1}{2}\tr \hat{X}^{2},
\end{align}
which holds true at $t=1$. 
Using Parts 8), b) and c) of Theorem \ref{THMBLOW2}, completes the proof of the theorem. 
\end{proof}

\bibliographystyle{plain}

\end{large}
\end{document}